\definecolor{darkred}{rgb}{0.5,0,0}
\definecolor{darkgreen}{rgb}{0,0.5,0}
\definecolor{darkblue}{rgb}{0,0,0.5}
\theoremstyle{plain}
\newtheorem{theorem}{Theorem}[section]
\newtheorem{lemma}[theorem]{Lemma}
\theoremstyle{remark}
\newtheorem{remark}[theorem]{Remark}
\newcommand\A{\mathcal{A}}
\newcommand\M{\mathcal{M}}
\renewcommand\M{\mathcal{M}}
\newcommand{\K}{\mathcal{K}}
\newcommand{\J}{\mathcal{J}}
\newcommand{\R}{\mathbb{R}}
\renewcommand{\H}{\mathbb{H}}
\newcommand{\C}{\mathbb{C}}
\newcommand{\Z}{\mathbb{Z}}
\newcommand{\Q}{\mathbb{Q}}
\newcommand{\ddt}{\frac{d}{dt}}
\newcommand{\dds}{\frac{d}{ds}}
\renewcommand{\P}{\mathbb{P}}
\newcommand{\bA}{\mathbb{A}}
\newcommand\lie[1]{\mathfrak{#1}}
\renewcommand{\k}{\lie{k}}
\newcommand{\g}{\lie{g}}
\newcommand{\on}{\operatorname}
\newcommand{\ainfty}{{$A_\infty$\ }}
\newcommand{\Assoc}{\on{Assoc}}
\newcommand{\Cycl}{\on{Cycl}}
\newcommand{\Comm}{\on{Comm}}
\newcommand{\dual}{\vee}
\newcommand{\Ve}{\on{Vert}}
\newcommand{\Edge}{\on{Edge}}
\newcommand{\Ver}{\on{Vert}}
\newcommand{\Aut}{ \on{Aut} }
\newcommand{\Ad}{ \on{Ad} }
\newcommand{\Hom}{ \on{Hom}}
\newcommand{\Mult}{\on{Mult}}
\newcommand{\Vol}{\omega}
\newcommand{\dist}{\on{dist}}
\newcommand{\codim}{\on{codim}}
\newcommand{\ssm}{-}
\newcommand\dirac{/\kern-1.2ex\partial} % Dirac operator
\newcommand\qu{/\kern-.7ex/} % Categorical quotients
\newcommand\lqu{\backslash \kern-.7ex \backslash} % Categorical
\newcommand\bs{\backslash}
\newcommand\dr{r_+ \kern-.7ex - \kern-.7ex r_-}
\newtheorem{definition}[theorem]{Definition}
\newtheorem{proposition}[theorem]{Proposition}
\newtheorem{example}[theorem]{Example}
\renewcommand{\comment}[1]   {{\marginpar{*}\scriptsize{\ #1 \ }}}
\newcommand{\labell}\label
\renewcommand{\d}{{\on{d}}}
\newcommand{\ovl}{\overline}
\newcommand{\olp}{\ovl{\partial}}
\newcommand\Phinv{\Phi^{-1}}
\newcommand\eps{\epsilon}
\newcommand{\f}{\frac}
\newcommand{\lan}{\langle}
\newcommand{\ran}{\rangle}
\newcommand{\hh}{{\f{1}{2}}}
\newcommand{\ti}{\tilde}
\newcommand{\sss}{\on{ss}}
\renewcommand\AA{\mathcal{A}}
\newcommand\Map{\on{Map}}
\newcommand\ev{\on{ev}}
\newcommand\ul{\underline}
\newcommand\mO{\mathcal{O}}
\renewcommand\H{\mathcal{H}}
\newcommand\bra[1]{ < \kern-.7ex {#1} \kern-.7ex >} % Categorical
\newcommand\bdefn{\begin{definition}}
\newcommand\edefn{\end{definition}}
\newcommand\bea{\begin{eqnarray*}}
\newcommand\eea{\end{eqnarray*}}
\newcommand\bcv{\left[ \begin{array}{r} }
\newcommand\ecv{\end{array} \right] }
\newcommand\bma{\left[ \begin{array} }
\newcommand\ema{\end{array} \right]}
\newcommand\ben{\begin{enumerate}}
\newcommand\een{\end{enumerate}}
\newcommand\beq{\begin{equation}}
\newcommand\eeq{\end{equation}}
\newcommand\bex{\begin{example}}
\newcommand\bsj{\left\{ \begin{array}{rrr} }
\newcommand\esj{\end{array} \right\}}
\newcommand\Conf{\on{Conf}}
\newcommand\eex{\end{example}}
\newcommand\sx{*\kern-.5ex_X}
\newcommand{\fr}{{\on{fr}}}
\newcommand{\cA}{{\mathcal{A}}}
\def\mathunderaccent#1{\let\theaccent#1\mathpalette\putaccentunder}
\def\putaccentunder#1#2{\oalign{$#1#2$\crcr\hidewidth \vbox
to.2ex{\hbox{$#1\theaccent{}$}\vss}\hidewidth}}
\begin{document}

\title[Quantum Kirwan morphism I]{Quantum Kirwan morphism and Gromov-Witten invariants of
  quotients I}

\authors{Chris T. Woodward\thanks{Partially supported by NSF
 grant DMS0904358 and the Simons Center for Geometry and Physics}
\address Department of Mathematics \\ 
Rutgers University \\ 110 Frelinghuysen Road \\ Piscataway, NJ 08854-8019,
U.S.A. 
  \email ctw@math.rutgers.edu
}

%\received{December 11, 1905}
%\accepted{February 29, 1906}

\maketitle

\begin{abstract}  
This is the first in a sequence of papers in which we construct a
quantum version of the Kirwan map from the equivariant quantum
cohomology $QH_G(X)$ of a smooth polarized complex projective variety
$X$ with the action of a connected complex reductive group $G$ to the
orbifold quantum cohomology $QH(X \qu G)$ of its geometric invariant
theory quotient $X \qu G$, and prove that it intertwines the genus
zero gauged Gromov-Witten potential of $X$ with the genus zero
Gromov-Witten graph potential of $X \qu G$.  In this part we introduce
the moduli spaces used in the construction of the quantum Kirwan
morphism.
\end{abstract} 

\tableofcontents

\section{Introduction}

This is the first in a sequence of papers in which we construct a
quantum version of the morphism studied by Kirwan \cite{ki:coh}, which
maps the equivariant cohomology of a Hamiltonian group action to the
cohomology of the symplectic quotient.  The existence of a quantum
version was suggested by Salamon and Ziltener \cite{zilt:phd},
\cite{zilt:qk}.  Here we work under the assumption that the target is
a smooth projectively-embedded variety with a connected reductive
group action such that the stable locus is equal to the semistable
locus; this allows us to use the virtual fundamental cycle machinery
of Behrend-Fantechi \cite{bf:in}.  We prove that the quantum Kirwan
map intertwines the Gromov-Witten graph potential of the quotient with
the {\em gauged Gromov-Witten} potential of the action in the large
area limit.  In physics language, the quantum Kirwan map relates
correlators of a (possibly non-linear, non-abelian) gauged sigma model
with those of the sigma model of the quotient.  As such, the results
overlap with those of Givental \cite{gi:eq}, Lian-Liu-Yau
\cite{lly:mp1}, Iritani \cite{iri:gmt} and others.  The connection to
mirror symmetry is explained in the paper of Hori-Vafa \cite{ho:mi}:
because mirror symmetry for vector spaces is rather trivial, the
non-trivial change of coordinates arises when passing from a gauged
linear sigma model to the sigma model for the quotient.  Since the
quantum Kirwan map is defined geometrically, it can be rather
difficult to compute and the algebraic approach in \cite{gi:eq},
\cite{lly:mp1} is more effective in cases where it applies.  The
geometric approach pursued here has the advantage that are no
semipositivity assumptions on $X \qu G$ or abelian-ness assumptions on
the group $G$.  Also, $X$ can be a projective variety rather than a
vector space.  At the time that we started the project, there were
rather few papers about these situations; however in the meantime
papers such as Ciocan-Fontanine-Kim \cite{ciocan:bigI},
\cite{ciocan:genuszerowall} and Coates-Corti-Iritani-Tseng
\cite{coates:mirrorstacks} substantially extend the hypotheses of the
previous theorems.  However, the approaches are still quite different:
In \cite{coates:mirrorstacks}, the fundamental solution to the quantum
differential equation is expressed using a characterization of
Givental's Lagrangian cone for toric stacks, while in
\cite{ciocan:bigI}, \cite{ciocan:genuszerowall} the relationship to
the Gromov-Witten invariants of the quotient is given by a
wall-crossing formula rather than an adiabatic limit, and the proof
involves localization using an auxiliary group action.

To explain the gauged Gromov-Witten potential, recall that the {\em
  equivariant cohomology} of a $G$-space $X$ is the cohomology of the
homotopy quotient $X_G = EG \times_G X$ where $EG \to BG$ is a
universal $G$-bundle.  Equivariant quantum cohomology should count
maps $u: C \to X_G$ where $C$ is a curve equipped with some additional
data and $u$ is a holomorphic map.  Any such map can be viewed as a
map to $BG = EG/G$ together with a lift to $X_G$.  Holomorphic maps to
$BG$ correspond to holomorphic $G$-bundles, and so a holomorphic map
to $X_G$ is given by a holomorphic $G$-bundle $P \to C$ together with
a holomorphic section of the associated $X$ bundle $u: C \to P(X) := P
\times_G X$.  Givental \cite{gi:eq} had earlier introduced an {\em
  equivariant Gromov-Witten theory}, base on equivariant counts of
maps from a curve to $X$.  These counts give rise to a family of
products on the {\em equivariant quantum cohomology} $ QH_G(X) =
H_G(X) \otimes \Lambda^G_X $ where convergence issues are solved by
the introduction of the {\em Novikov field} $\Lambda^G_X \subset
\Hom(H^G_2(X,\Z),\Q) $.  In the language of maps to the classifying
space $X_G$, Givental's equivariant Gromov-Witten theory corresponds
to counting maps from $C$ to $X_G$ whose image lies in a single fiber
of the projection $X_G \to BG$, that is, such that the $G$-bundle is
trivial.  In order to distinguish the theory here from that of
Givental, we will call the theory with non-trivial bundles {\em gauged
  Gromov-Witten theory}, and call the map $u: C \to X_G$ a {\em
  gauged} map to $X$.

Gauged Gromov-Witten invariants should be defined as integrals over
moduli spaces of gauged maps.  In order to obtain proper moduli spaces
one needs to impose a {\em stability} or {\em moment map} condition as
well as compactify the moduli space by, for example, allowing {\em
  bubbling}.  Mundet and Salamon's work, see \cite{ci:symvortex},
provides a symplectic approach to the moduli spaces of gauged maps and
construction of invariants in the case that the target is a vector
space.  Mundet's thesis \cite{mund:corr} connects the symplectic
approach to an algebraic stability condition, which combines the
Ramanathan stability condition for principal bundles with the
Hilbert-Mumford stability for the action.  Schmitt
\cite{schmitt:univ}, \cite{schmitt:git} had earlier constructed a
Grothendieck-style compactification in the case that the target is a
smooth projective variety, while in the symplectic setting Mundet
\cite{mun:ham} and Ott \cite{ott:remov} show that the connected
components of the moduli spaces of semistable gauged maps have a
Kontsevich-style compactification.

In general, one needs virtual fundamental cycles to define integration
over the moduli spaces.  Here we restrict to the case that the target
$X$ is a projective $G$-variety and note that a pair $(P \to C, u: C
\to P(X))$ is by definition a morphism from $C$ to the {\em quotient
  stack} $X/G$ introduced by Deligne-Mumford \cite{dm:irr}.  We then
use the theory of virtual fundamental classes developed by
Behrend-Fantechi \cite{bf:in}, based on earlier work of Li-Tian
\cite{litian:vir}, to define gauged Gromov-Witten invariants.  The
symplectic geometry is then only used as motivation, and to show that
the Deligne-Mumford stacks that arise are proper.  Of course it is
desirable to have semistable reduction theorems to show properness but
from the algebraic perspective even the stability conditions are
somewhat obscure and we prefer the symplectic route to properness.  A
good example is the moduli space of semistable bundles on a curve,
where properness is immediate from the Narasimhan-Seshadri description
as unitary representations of the fundamental group but semistable
reduction is more involved. 

The theory of gauged Gromov-Witten invariants in genus zero fits into
an algebraic formalism that is a ``complexification'' of the theory of
{\em homotopy associative}, or \ainfty, algebras introduced by
Stasheff \cite{st:hs}.  In the \ainfty story, which roughly
corresponds to ``open strings'' in the mathematical physics language,
one has notions of \ainfty algebras, \ainfty morphisms, and \ainfty
traces.  These notions are associated with different polytopes called
the associahedra, multiplihedra, and cyclohedra respectively.  The
complexifications of these spaces are the Grothendieck-Knudsen space
$\ovl{\M}_{0,n}$ of stable $n$-marked genus $0$ curves, Ziltener
compactification $\ovl{\M}_{n,1}(\bA)$ of $n$-marked $1$-scaled affine
lines, and the Fulton-MacPherson space $\ovl{\M}_n(\P) :=
\ovl{\M}_{0,n}(\P,[\P])$ of stable $n$-marked, genus $0$ maps of class
$[\P]$ to the projective line $\P$ respectively.  The first space
leads to the notion of {\em genus zero cohomological field theory
  (CohFT)}, in particular, a {\em CohFT algebra} given by the
invariants with multiple incoming markings and a single outgoing
marking.  The second space is associated to the notion of {\em
  morphism of CohFT algebras}, and the third to the notion of {\em
  trace on a CohFT algebra}.  The following is proved in
Gonzalez-Woodward \cite{cross}, and is reviewed in \cite[Theorem
  7.20]{qk3}.

\begin{theorem} [Gauged Gromov-Witten invariants]  \cite{cross}
\label{theorem1}
Let $X$ be a smooth polarized projective $G$-variety and $C$ a smooth
connected projective curve.  Suppose that every semistable gauged map
is stable; then the category of stable gauged maps is a proper
Deligne-Mumford stack equipped with a perfect obstruction theory.  The
gauged Gromov-Witten invariants $ (\tau_X^{G,n})_{n \ge 0}: QH_G(X)^n
\times H_n(\ovl{\M}_n(\P)) \to \Lambda_X^G $ define a trace on
$QH_G(X)$.
\end{theorem}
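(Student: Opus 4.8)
The statement has two parts: the geometric claim that the moduli stack of stable gauged maps is a proper Deligne-Mumford stack carrying a perfect obstruction theory, and the algebraic claim that the resulting invariants satisfy the axioms of a trace. The plan is to treat these in turn, building the moduli stack first and then reading off the trace axioms from its boundary structure.

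\emph{Construction and the Deligne-Mumford property.} First I would use the observation made above that a gauged map is nothing but a morphism from a curve to the quotient stack $X/G$, where the source is $C$ together with trees of genus zero bubbles carrying the scaling data indexed by $\ol{\M}_n(\P)$. I would set up the moduli functor of such stable morphisms and show it is represented by an algebraic stack, using that $X$ is a projective $G$-variety so that $X/G$ has a projective good moduli space $X \qu G$; boundedness of the morphisms of a fixed class is then controlled by the polarization. That the automorphism groups are finite — so that the stack is Deligne-Mumford rather than merely Artin — is exactly the content of the hypothesis that every semistable gauged map is stable, which rules out infinitesimal automorphisms.

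\emph{Properness.} This I expect to be the main obstacle. The valuative criterion requires filling in a family of stable gauged maps over a punctured disk to a family over the disk, together with uniqueness of the filling. Rather than attempt algebraic semistable reduction, which is delicate for these stability conditions, I would import the symplectic compactness theorems for the vortex/gauged-map equations of Mundet-Salamon \cite{ci:symvortex} and Ott \cite{ott:remov}: the assumption that stable equals semistable guarantees there are no strictly semistable limits, and the Gromov-Uhlenbeck compactness then furnishes a limit with bounded bubbling, giving both existence and uniqueness of limits and hence properness. The symplectic geometry enters only here, and only to establish properness of the Deligne-Mumford stack.

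\emph{Perfect obstruction theory and the trace axioms.} Since $X$ is smooth, $X/G$ is a smooth quotient stack, and the deformation theory of a morphism $u$ from a curve to $X/G$ is governed by the two-term complex $R\Gamma(C, u^* T_{X/G})$, concentrated in degrees $0$ and $1$; this yields a relative perfect obstruction theory over the Artin stack of bundles and marked curves, and hence a virtual fundamental class in the sense of Behrend-Fantechi \cite{bf:in}. I would then define $\tau_X^{G,n}$ by pairing products of evaluation classes pulled back from $X/G$ against this virtual class, fibered over $\ol{\M}_n(\P)$ through the stabilization morphism that records the bubbling and scaling. Finally, the trace axioms would be verified by analyzing the codimension-one boundary: each such stratum is a fiber product of smaller gauged-map moduli spaces matching the factorization of $\ol{\M}_n(\P)$, and the splitting of the virtual class across these strata (the gluing axiom of \cite{bf:in}, after \cite{litian:vir}), summed over intermediate equivariant classes with the appropriate Novikov weights, reproduces exactly the compatibility relations defining a trace on $QH_G(X)$. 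Convergence in $\Lambda_X^G$ is guaranteed by the boundedness of the contributing classes at each energy level.
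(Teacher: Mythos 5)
Your overall route coincides with the one this paper sketches (the theorem itself is not proved here: it is quoted from Gonzalez--Woodward \cite{cross} and reviewed in \cite{qk3}): gauged maps as morphisms to the quotient stack $X/G$ in the sense of \cite{dm:irr}, a relative perfect obstruction theory and virtual class via Behrend--Fantechi \cite{bf:in}, properness obtained from the symplectic side rather than by algebraic semistable reduction, and the trace axioms read off from the boundary strata of $\ol{\M}_n(\P)$ matching the splitting axiom of Definition \ref{cohfttrace}. On all of those points your plan is the intended one.

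There is, however, a genuine gap in your properness step. The compactness theorems of Mundet \cite{mun:ham} and Ott \cite{ott:remov} concern moduli spaces of \emph{symplectic vortices}; by themselves they say nothing about the algebraic stack of semistable gauged maps that you constructed in your first paragraph. The bridge is the Hitchin--Kobayashi correspondence of Mundet's thesis \cite{mund:corr}, which identifies gauge-equivalence classes of vortices with isomorphism classes of gauged maps satisfying the algebraic (Mundet) stability condition --- a condition combining Ramanathan stability of the bundle with Hilbert--Mumford stability of the section, determined by the polarization. This is also what ``stable'' means in the hypothesis: it is a moment-map/GIT-type condition, not merely finiteness of automorphism groups (finiteness is a consequence for stable objects, and is what then yields the Deligne--Mumford property). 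Without invoking this correspondence, ``importing'' Gromov--Uhlenbeck compactness proves nothing about the algebraic stack. Relatedly, sequential compactness with unique limits gives a compact Hausdorff coarse space, but properness of the stack still requires finite type (boundedness of the semistable locus, as in Schmitt \cite{schmitt:univ}) and separatedness; it is exactly at these points that the correspondence and the stable$=$semistable hypothesis do the real work, which is why the paper singles out \cite{mund:corr} alongside \cite{ci:symvortex} and \cite{ott:remov}.
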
 
The gauged Gromov-Witten invariants are also defined for polarized
quasiprojective varieties under suitable properness assumptions for
the moduli stacks of gauged maps, for example, if the polarization
corresponds to an equivariant symplectic form with proper moment map
convex at infinity.  In particular, gauged Gromov-Witten invariants
are defined for a vector space $X$ equipped with the action of a torus
$G$ such that the weights are contained in an open half-space in the
space of rational weights; this condition implies in particular that
the quotient $X \qu G$ is proper under the stable=semistable
condition.

One can organize the gauged Gromov-Witten invariants into a formal
{\em gauged Gromov-Witten potential}
$$ \tau^G_X: QH_G(X) \to \Q, \quad \alpha \mapsto \sum_{n \ge 0}
\tau_X^{G,n}(\alpha,\ldots, \alpha;1) /n!  .$$
Here ``formal'' means that the sum may not converge and we treat
$\tau^G_X$ as a Taylor series.  The splitting axiom implies that the
bilinear form constructed from the second derivatives of the potential
is compatible with the quantum product $\star_\alpha$ on $T_\alpha
QH_G(X)$ in the usual sense.  However this bilinear form will usually
be degenerate and so will not define a family of Frobenius algebra
structures.  It is convenient to throw into the definition of the
trace certain {\em Liouville classes} on the moduli space of gauged
maps, in which case, if $G$ is trivial, $\tau^G_X$ becomes equivalent
to the {\em graph potential} considered in Givental \cite{gi:eq}.

From the definition of Mundet stability one expects the gauged
Gromov-Witten invariants to be related in the limit that the
equivariant symplectic class $[\omega_{X,G}]$ approaches infinity to
the Gromov-Witten invariants of the quotient $X \qu G$, or more
precisely, to the genus zero {\em graph potential} for the geometric
invariant theory quotient $X \qu G$
$$ \tau_{X \qu G}: QH(X \qu G) \to \Lambda_{X \qu G}.$$
For our purposes, it is more natural to work over the larger Novikov
field $\Lambda_X^G$.  

However one sees quickly that the two potentials above cannot be
equal, most obviously because they have different domains.
Gaio-Salamon \cite{ga:gw} showed that the limiting process in which
the area of the domain is taken to infinity involves various kinds of
bubbling which one hopes to incorporate into a description of the
relationship.  Salamon and Ziltener \cite{zilt:phd} suggested that a
map $\kappa^G_X: QH_G(X) \to QH(X \qu G)$ might by defined by counting
the ``affine vortices'' that arise in the large area limit.  By a
Hitchin-Kobayashi correspondence with Venugopalan \cite{venu:heat},
\cite{venuwood:class} these maps correspond to the following algebraic
objects: 

\begin{definition}  \label{affinegauged} {\rm (Affine gauged maps)}  
In the case that $X \qu G$ is a free quotient, an {\em affine gauged
  map} to $X$ consists of a tuple
%
%CW
$$(p: P \to \P, \lambda : \P \to T^\dual \P \otimes \mO(2\infty), u:
\P \to P(X),\ul{z} \in (\P)^n)$$
consisting of 
\begin{enumerate} 
\item (Scaling form) a meromorphic one-form $\lambda$ on $\P$ with
  only a double pole at $\infty \in \P$ (hence inducing an affine
  structure on $\P - \{ \infty \}$);
\item (Morphism to the quotient stack, stable at infinity) a
  morphism $\P \to X/G$, consisting of a $G$-bundle $p: P \to \P$
  and a section $u: \P \to P(X)$ such that $u(\infty)$ is contained
  in the open subvariety $P(X^{\sss})$ associated with the semistable
  locus $X^{\sss} \subset X$; and
\item (Markings) an $n$-tuple of distinct points $\ul{z} = (z_1,\ldots,z_n) \in (\P)^n$. 
\end{enumerate} 
\end{definition} \vskip .1in 

In the case that the action of $G$ on the semistable locus of $X$ is
only locally free we also allow a stacky structure at infinity.  That
is, for $r > 0$ an integer we denote by $\mu_r$ the group of $r$-th
complex roots of unity and by 
$$\P[1,r] := ( \C^2 - \{ 0 \}) / \C^\times $$
the weighted projective line with a $\mu_r$-singularity at $\infty$,
where $\C^\times$ acts on $\C^2$ with weights $1,r$.  The morphism $u$
is then required to be a representable morphism from some $\P[1,r]$ to
$X/G$.

\begin{theorem}
 [Quantum Kirwan Morphism] \label{large} Suppose that $X$ is a
 projective $G$-variety equipped with a polarization such that every
 semistable point is stable.  Integrating over a compactified stack of
 affine gauged maps defines a morphism of CohFT algebras
$$\kappa_X^G: QH_G(X) \to QH(X \qu G) .$$
\end{theorem}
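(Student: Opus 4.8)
The plan is to realize $\kappa_X^G$ as the collection of maps $(\kappa_X^{G,n})_{n \ge 0}$ obtained by integrating over the compactified moduli stacks of affine gauged maps of Definition \ref{affinegauged}, and then to verify the axioms of a morphism of CohFT algebras by reading them off the codimension-one boundary of the Ziltener space $\ol{\M}_{n,1}(\bA)$ of $n$-marked $1$-scaled affine lines, exactly as Theorem \ref{theorem1} handles the trace axioms via $\ol{\M}_n(\P)$. First I would construct the moduli stack $\ol{\M}_n^G(\bA,X)$ of stable affine gauged maps, fibered over $\ol{\M}_{n,1}(\bA)$, and show it is a proper Deligne-Mumford stack carrying a perfect obstruction theory. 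Properness is the delicate analytic input and is where the symplectic picture enters: via the Hitchin-Kobayashi correspondence of Venugopalan \cite{venu:heat}, \cite{venuwood:class} an affine gauged map corresponds to an affine vortex, and the removal-of-singularities and compactness results for affine vortices of Gaio-Salamon \cite{ga:gw}, Ziltener \cite{zilt:phd}, and Ott \cite{ott:remov} guarantee that bounded-energy sequences converge, after bubbling, to stable limits whose behavior at $\infty$ remains in the semistable locus. The obstruction theory is that of morphisms into the quotient stack $X/G$ relative to the varying scaled domain, and by Behrend-Fantechi \cite{bf:in} it produces a virtual fundamental class $[\ol{\M}_n^G(\bA,X)]^{\vir}$.

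Next I would define the structure maps. Evaluation at the $n$ incoming markings $\ul{z}$ gives morphisms to $X/G$ and hence pullbacks of equivariant classes in $QH_G(X)$, while the stability-at-infinity condition makes evaluation at the singular point $\infty$ a morphism to the rigidified quotient $X \qu G$ (valued in its inertia stack, to account for the $\mu_r$ structure of $\P[1,r]$), hence a pushforward to the orbifold cohomology $QH(X \qu G)$. Capping $[\ol{\M}_n^G(\bA,X)]^{\vir}$ with the $n$ pulled-back incoming classes and a homology class on $\ol{\M}_{n,1}(\bA)$, and pushing forward under evaluation at $\infty$, defines
$$\kappa_X^{G,n}: QH_G(X)^n \times H_*(\ol{\M}_{n,1}(\bA)) \to QH(X \qu G),$$
where I would extend coefficients over the Novikov field $\Lambda_X^G$ using the Kirwan-type map $H_2^G(X,\Z) \to H_2(X \qu G,\Z)$ on curve classes. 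Assembling these, with the usual symmetrization over incoming markings and Novikov-weighted sum over classes, gives the candidate $\kappa_X^G$.

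The heart of the proof is the verification that this collection is a morphism of CohFT algebras, which I would deduce from the recursive structure of $\partial \ol{\M}_{n,1}(\bA)$, the complexified multiplihedron. Its codimension-one strata are of two types: those in which a subset of incoming markings collides and bubbles off a stable genus-zero curve mapping to $X/G$, reproducing the equivariant quantum product on the source $QH_G(X)$; and those in which the scaling degenerates so that a rational bubble maps into the semistable locus, i.e.\ into the quotient $X \qu G$, reproducing the quantum product on the target $QH(X \qu G)$. Matching these two families of boundary contributions to the two sides of the morphism relation requires a \emph{splitting} formula: the restriction of $[\ol{\M}_n^G(\bA,X)]^{\vir}$ to each boundary divisor must equal the fiber product, over the evaluations at the new node, of the virtual classes of the two glued pieces --- an affine gauged map with fewer markings glued to either an $X/G$-bubble or an honest genus-zero Gromov-Witten component of $X \qu G$.

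This comparison of virtual fundamental classes across the boundary is the main obstacle. It requires analyzing the normalization exact sequence for the deformation-obstruction complex of a nodal gauged map, controlling the automorphisms of the scaled domain and the behavior of the scaling one-form $\lambda$ near its double pole, and identifying the two factors of the glued obstruction theory with those of affine vortices on one side and of maps to $X \qu G$ on the other. Once this boundary restriction formula is established, the morphism axioms follow formally: choosing homology classes on $\ol{\M}_{n,1}(\bA)$ that bound chains meeting the boundary in the prescribed strata yields precisely the relations among the $\kappa_X^{G,n}$, the source products, and the target products that define a morphism of CohFT algebras.
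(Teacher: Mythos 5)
Your proposal follows essentially the same route as the paper: a compactified moduli stack $\ol{\M}^G_{n,1}(\bA,X)$ of affine gauged maps fibered over the complexified multiplihedron, properness via the affine-vortex compactness results of Ziltener and Ott together with the Venugopalan--Woodward Hitchin--Kobayashi correspondence, a Behrend--Fantechi perfect relative obstruction theory, pull-push along the evaluation maps (with evaluation at infinity landing in the rigidified inertia stack $\ol{I}_{X \qu G}$), and the splitting axiom read off the two types of boundary divisors $D_I$ and $D_{[I_1,\ldots,I_r]}$. The one point to sharpen is that $\ol{M}_{n,1}(\bA)$ is singular and not a rational homology manifold, so the splitting axiom holds only for combinations of boundary divisors admitting dual classes in $H^2(\ol{M}_{n,1}(\bA))$ (as in Definition \ref{morphismcohfts} and Remark \ref{Drem}), not for arbitrary bounding chains as your last paragraph suggests.
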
 
\noindent By definition a morphism of CohFT algebras consists of a sequence of maps
$$ \kappa_X^{G,n}: QH_G(X)^{n} \times H(\ovl{\M}_{n,1}(\bA))
\to QH(X \qu G), \quad n \ge 0 $$
satisfying a splitting axiom that guarantees that the formal map
$$ {\kappa_X^G}: QH_G(X) \to QH(X \qu G), \quad \alpha \mapsto \sum_{n
  \ge 0} {\kappa}_X^{G,n}(\alpha,\ldots, \alpha;1) /n! $$
induces a $\star$-homomorphism on each tangent space.  In the case
that the {\em curvature} $\kappa_X^{G,0}$ of the quantum Kirwan
morphism vanishes, one obtains in particular a morphism of small
quantum cohomology rings
$$ \kappa_{X}^{G,1} : T_0 QH_G(X) \to T_0 QH(X \qu G) .$$
In this sense, morphisms of CohFT algebras can be considered as
non-linear generalizations of algebra homomorphisms.  

More precisely the quantum Kirwan morphism is defined by virtual
integration over a compactification $\ovl{\M}^G_{n,1}(\bA,X)$ of the
moduli stack of affine gauged maps equipped with evaluation and
forgetful maps
$$ \ev \times \ev_\infty: \ovl{\M}^G_{n,1}(\bA,X) \to (X/G)^n \times \ovl{I}_{X \qu G},
\quad f: \ovl{\M}^G_{n,1}(\bA,X) \to \ovl{\M}_{n,1}(\bA) $$
where $\ovl{I}_{X \qu G}$ is the rigidified inertia stack appearing in
orbifold Gromov-Witten theory.  Properness of this moduli space
follows from compactness results of Ziltener \cite{zilt:phd},
\cite{zilt:qk} (for affine vortices) and a Hitchin-Kobayashi
correspondence for affine vortices due to Venugopalan and the author
\cite{venuwood:class}.  This space again has a perfect relative
obstruction theory, and pull-push using the virtual fundamental class
gives rise to the maps $\kappa^G_{X,n}$.  From the physics point of
view, Witten \cite{wi:ph} explained the relationship between
correlators in gauged sigma models and sigma models with target the
symplectic quotient as a kind of ``renormalization'' given by
``counting pointlike instantons''.  The ``quantum Kirwan map'' is a
precise mathematical meaning for part of this statement for arbitrary
gauged (possibly non-linear) sigma models.

Despite the complicated-looking definition, the stack of affine gauged
maps is easy to understand in simple cases.  

\begin{example} {\rm (Toric orbifolds)}  Let 
$X$ a vector space and $G$ a torus with Lie algebra $\g$ acting with
  weights $\mu_1,\ldots,\mu_k \in \g^\dual$ contained in a half-space
  a real form $\g^\dual_\R$.  Assume that stable=semistable, so that
  the quotient $X \qu G$ is a proper toric Deligne-Mumford stack.
  Then $\M_{1,1}^G(\bA,X)$ is isomorphic to the stack of morphisms $u$
  from $\bA$ to $X$ (that is, $X$-valued polynomials in a single
  variable) satisfying the following conditions for any $d \in \g_\Q$:
\begin{enumerate} 
\item {\rm (Degree Restriction)} the $j$-th component $u_j$ of $u$ has
  degree at most $(d,\mu_j), j = 1,\ldots, k$, that is,
$$ u_j(z) = a_{j,0} + a_{j,1}z + \ldots + a_{j,\lfloor (d,\mu_j)
    \rfloor } z^{\lfloor (d,\mu_j) \rfloor } $$
for some constants $a_{j,k} \in \C$.  If $(d,\mu_j)$ is not an integer
%CW
let $a_{j,(d,\mu_j)} = 0$.
\item {\rm (Stability condition)} The collection of leading order
  coefficients
$$u(\infty) := (a_{j,(d,\mu_j)})_{j=1}^k \in X^{\exp(d)}$$
lies in the semistable locus $X^{\sss}$ of $X$ and so defines a point
in the substack of the inertia stack given by $\{ \exp(d) \times
X^{\exp(d),\sss} \} / G \subset I_{X \qu G}$.
\end{enumerate} 

If $X \qu G$ is Fano with minimal Chern number at least two then
$\kappa_X^G(0) = 0 $ and $D_0 \kappa_X^G$ is an algebra homomorphism
of small quantum cohomologies.  Thus knowing the map $D_0 \kappa_X^G$
allows to give a presentation of the small quantum cohomology of $X
\qu G$.
\begin{enumerate}
\item {\rm (Projective space)} If $G = \C^\times$ acts by scalar
  multiplication on $X = \C^k$ then there is a unique morphism $u: \bA
  \to X$ such that all components have degree $1$,
$$ u(z) = (a_{1,0} + a_{1,1} z, \ldots, a_{k,0} + a_{k,1} z) $$
with marking at $z_1 = 0$, given limit at infinity
$$ \ev_\infty u = [ a_{1,1},\ldots, a_{k,1} ] \in \P^{k-1} = X \qu
G $$
and vanishing value 
$$ \ev_0 u = u(0) = (a_{1,0},\ldots,a_{k,0}) \in \C^k = X $$
at $z_1 = 0 \in \C \cong \bA$.  Interpreting $\xi^k$ as the Euler
class of $\C^k$ this implies that
$$D_0\kappa_X^G(\xi^k) = q $$ 
where $\xi$ is the generator of $QH_G(X)$ and $k = \dim(X)$.  See
Lemma 8.8 of \cite{qk3} for a justification that the higher
codimension strata may be ignored.  This gives the standard
presentation
$$QH(X \qu G) = \Lambda_X^G[\xi]/(\xi^k - q)$$ 
of quantum cohomology of projective space.
\item {\rm (The teardrop orbifold, a weighted projective line)}
  Suppose that $G = \C^\times$ acts on $X = \C^2$ with weights $1,2$,
  so that $X \qu G = \P[1,2]$ is the teardrop orbifold.  Morphisms of
  class $d \in \Q \cong H_2^G(X,\Q)$ exist only if $d \in \Z/2$, and
  if so are given by pairs of polynomials
%CW here
%
$$ u(z) = (a_{1,0} + \ldots + a_{1,\lfloor d \rfloor} z^{\lfloor d
    \rfloor}, a_{2,0} + \ldots + a_{2,2d} z^{2d}) .$$
 In zero degree, $\ovl{M}_{1,1}^G(\bA,X,0) = X \qu G$ implies that $D_0
 \kappa_X^G(1) = 1$.  We may interpret $(2\xi)^{2d} \xi^{\lfloor d
   \rfloor}$ as the Euler class of the vector bundles corresponding to
 the derivatives of $u$ at $0$.  Integrating the Euler class amounts
 to counting such maps whose derivatives $j!  a_{1,j}, j < d, \ j!
 a_{2,j}, j < 2d$ are zero at the marking and, if $d$ is even, have
 semistable leading order terms
$$ \ev_\infty u = [ a_{1,d}, a_{2,2d}] \in \P[1,2] $$
see Lemma 8.8 of \cite{qk3}.  There is a unique such $[u]$ for a given
$\ev_\infty u$.  For $d = 1/2$ this implies
$$D_0 \kappa_X^G(\xi^2) =  1_{\Z_2}/2 $$ 
half the generator $1_{\Z_2}$ of the twisted sector.  For $d = 1$ this
implies that
$$D_0 \kappa_X^G(\xi^3) = q/4 .$$  
Hence the presentation of the quantum cohomology of the teardrop
orbifold 
$$QH( \P[1,2])= \Lambda_X^G[\xi]/(q - 4\xi^3) $$ 
which is a special case of Coates-Lee-Corti-Tseng \cite{coates:wps}.
\end{enumerate} 
See Example 5.32 for more details.  A similar strategy gives relations
for the quantum cohomology of any proper toric Deligne-Mumford stack
with projective moduli space.  In addition, the setup also applies to
quotients by non-abelian groups such as quiver varieties.
\end{example}

%CW
One of the main results of this paper is that in the large area limit
the graph potentials are naturally related via the quantum Kirwan
morphism in the following sense.  The quantum Kirwan map has an
$S^1$-equivariant extension
$\kappa_{X,G}: QH_G(X) \to QH(X \qu G)[[\hbar]]$ (which for any fixed
power of $q$, involves only finitely many powers of $\hbar$).  The
trace $\tau_{X \qu G}$ has a natural $S^1$-equivariant extension given
by interpreting the $\hbar$ as the first Chern class of the tangent
line at the marking (that is, the opposite of the $\psi$-class).  We
fix a symplectic class $[\omega_{X,G}] \in H^2_G(X)$ and consider the
stability conditions corresponding to the classes
$\rho [\omega_{X,G}]$ as $\rho \in (0,\infty)$ varies.

\begin{theorem}[Adiabatic Limit Theorem]
\label{largearea} 
Suppose that $C$ is a smooth projective curve and $X$ a polarized
projective $G$-variety such that stable=semistable for gauged maps
from $C$ to $X$ of sufficiently large $\rho$.  Then
\begin{equation} \label{largerel}
 {\tau}_{X \qu G} \circ {\kappa_X^G} = \lim_{\rho \to \infty}
 {\tau}_X^G. \end{equation}
\end{theorem}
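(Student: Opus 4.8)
The plan is to realize the large-area limit as a degeneration in which the gauged maps from $C$ into $X$ decouple into a \emph{principal} map to the quotient $X \qu G$ together with a finite collection of affine gauged maps---the ``pointlike instantons'' of Witten \cite{wi:ph}---bubbling off at points of $C$. Concretely, for $\rho \in (0,\infty)$ let $\ol{\M}^G(C,X)_\rho$ denote the moduli stack of stable gauged maps from $C$ to $X$ with the Mundet stability condition attached to the class $\rho[\omega_{X,G}]$. By hypothesis stable${}={}$semistable for $\rho$ sufficiently large, so by Theorem \ref{theorem1} each such $\ol{\M}^G(C,X)_\rho$ is a proper Deligne-Mumford stack with a perfect obstruction theory, and $\tau_X^G$ is the associated trace, defined by virtual pull-push through the evaluation and forgetful maps. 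The first step is to set up the whole family as $\rho$ varies, together with a compactification in which the scale $\rho$ is allowed to degenerate to $\infty$; the scaling one-form $\lambda$ of Definition \ref{affinegauged} is exactly the device that records this degeneration on the bubble components.

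The second step is to identify the limiting objects. Using the symplectic compactness theory for the vortex equations in the adiabatic limit of Gaio-Salamon \cite{ga:gw} and Ziltener \cite{zilt:phd}, \cite{zilt:qk}, together with the Hitchin-Kobayashi correspondence of Venugopalan and the author \cite{venu:heat}, \cite{venuwood:class}, one shows that a sequence of gauged maps with bounded energy converges to a stable \emph{scaled} gauged map: a stable map $C \to X \qu G$ together with finitely many affine gauged maps carried on scaled affine lines $\bA$ attached at distinct points of $C$, the attaching condition being that the evaluation $\ev_\infty$ of each bubble agrees, as a point of the rigidified inertia stack $\ol{I}_{X \qu G}$, with the value of the principal map at the node. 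This exhibits the infinite-scale stratum as a fibre product
\begin{equation} \label{fibprod}
\ol{\M}(C,X \qu G) \times_{(\ol{I}_{X \qu G})^{k}} \prod_{i=1}^{k} \ol{\M}^G_{n_i,1}(\bA,X),
\end{equation}
the matching being taken over the inertia stack to record the twisted sectors in which $\ev_\infty$ lands.

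The third step is the construction of a master moduli stack $\ol{\M}^G(C,X;\bA)$ of scaled gauged maps whose generic (finite-scale) fibre is $\ol{\M}^G(C,X)_\rho$ and whose infinite-scale boundary stratum is \eqref{fibprod}, equipped with a relative perfect obstruction theory restricting to the given ones on each stratum. The point is that at infinite scale the deformation complex of a scaled gauged map splits as the deformation complex of the principal map to $X \qu G$ together with those of the affine bubbles, glued along the inertia evaluation, with the scaling/smoothing parameter supplying exactly the normal direction to the stratum. Functoriality of the Behrend-Fantechi construction \cite{bf:in} under fibre products then forces the virtual fundamental class of \eqref{fibprod} to be the product of the virtual classes of the factors over $\ol{I}_{X \qu G}$, while deformation invariance along the scaling parameter identifies $\lim_{\rho \to \infty}$ of the finite-scale virtual integrals with the integral over the infinite-scale stratum.

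Finally, one assembles the potentials. Summing over the number $k$ of bubbles and over the distribution of the $n$ insertions $\alpha$ among the affine components, the factor $1/n!$ reorganizes into $\tfrac{1}{k!}\prod_i \tfrac{1}{n_i!}$ via the standard multinomial identity; the affine factors in \eqref{fibprod} contribute the iterates $\kappa_X^{G,n_i}(\alpha,\ldots,\alpha;1)$ and the principal factor contributes the graph invariant of $X \qu G$, evaluated on the glued sectors. By the definition of the composition of a trace with a morphism of CohFT algebras (Theorem \ref{large}) this is precisely the potential of $\tau_{X \qu G} \circ \kappa_X^G$, giving \eqref{largerel} as an equality of generating functions on $QH_G(X)$. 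The main obstacle is the third step: constructing the scaled-map moduli with a well-behaved relative obstruction theory and proving that the virtual class splits correctly across the infinite-scale stratum while remaining deformation invariant in $\rho$. This is the algebraic incarnation of the adiabatic decoupling of scales---one must rule out energy lost to the ``neck'' and match the stacky structure at $\infty$, including the age-shifts, so that the virtual dimensions add up across the gluing in \eqref{fibprod}.
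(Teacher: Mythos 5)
Your proposal is essentially the paper's own argument: the paper constructs exactly this master moduli of gauged maps with varying scaling (Theorem \ref{scaledproper} on the symplectic side, with the stack of stable scaled gauged maps in the sequels), identifies the infinite-scaling stratum as the fibre product of stable maps to $X \qu G$ with moduli of affine gauged maps glued over the inertia stack $I_{X \qu G}$, and deduces \eqref{largerel} from the divisor class relation $[\iota_\rho(\ol{M}_n(C))] = \sum [D_{[I_1,\ldots,I_r]}]$ of Proposition \ref{rhoequiv} together with the $2$-morphism splitting axioms of Definition \ref{2morphism} --- your ``deformation invariance in $\rho$'' is precisely this linear equivalence of the fibres of the map $\ol{M}_{n,1}(C) \to \P$ recording the scaling, and your multinomial reorganization of $1/n!$ is the lemma following that definition. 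The virtual-class compatibilities you correctly flag as the main obstacle in your third step are likewise the content deferred by the paper to \cite{qk2}, \cite{qk3}.
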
 

\noindent In other words, the diagram 
$$\begin{diagram} \node{QH_G(X)} \arrow{se,b}{\tau_X^G}
  \arrow[2]{e,t}{{\kappa}_X^G} \node{} \node{QH_{S^1}( X \qu G)}
  \arrow{sw,b}{\tau_{X \qu G}} \\ \node{} \node{\Lambda_X^G} \node{}
   \end{diagram}$$
commutes in the limit $\rho \to \infty$.\footnote{The $S^1$-subscript
  was omitted in the published version.}   The equality, or
commutativity of the diagram, holds in the space of distributions in
$q$, in other words, for each power of $q$ separately.  The result has
some analogies with the ``quantization commutes with reduction''
theorem of Guillemin-Sternberg \cite{gu:ge}.  However, in the
intervening years the use of ``quantum'' has changed so that now it
often refers to holomorphic curves.  The terminology ``adiabatic''
arises from the fact that stable gauged maps correspond to minima of
an energy function depending on $\rho$, so the theorem relates minima
in the limit $\rho \to \infty$.

One is often interested not in the graph Gromov-Witten invariants but
rather in the {\em localized graph potential} that arises as the fixed
point contributions for a circle acting on the domain.  As in the work
of Givental \cite{gi:eq} this localization in the case of
Gromov-Witten invariants gives rise to a solution $\tau_{X \qu G,\pm}$
for the quantum differential equation for $X \qu G$: for $\alpha \in
QH(X \qu G), \nu \in T_\alpha QH(X \qu G)$ after localization of the
equivariant parameter $\hbar$: A formal map
$$ \tau_{X \qu G,\pm}: QH(X \qu G) \to QH(X \qu G)[[ \hbar^{-1}]],
\quad (\pm \hbar) \partial_\nu \tau_{X \qu G,\pm} (\alpha) = \nu
\star_\alpha \tau_{X \qu G,\pm}(\alpha) .$$
The components of $\tau_{X \qu G,\pm}$ satisfy a version of the
Picard-Fuchs equations which play an important role in mirror symmetry
\cite{gi:eq}.  There are similar gauged versions (again formal)
$$ \tau_{X,\pm}^G : QH_G(X) \to QH(X \qu G)[[\hbar^{-1}]] $$
which capture the contributions from $0,\infty \in \P$ to the
localization formula applied to the {\em gauged} graph potential.
 The
factorization of the graph potentials generalizes to the gauged
setting and we prove a localized adiabatic limit Theorem
\ref{Jresults} below for the contributions to the fixed point formula.

\begin{theorem} 
[Localized adiabatic limit theorem]
\label{Jresults} \label{Jlargerel} \label{Jlargearea}
Under the assumptions of Theorem \ref{largerel},
$ \tau_{X \qu G,\pm} \circ \kappa_X^G = \tau_{X,\pm}^G .$
\end{theorem}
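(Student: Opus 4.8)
The plan is to deduce the localized identity from the non-localized adiabatic limit Theorem~\ref{largearea} by passing to $\C^\times$-equivariant cohomology and applying virtual localization for the $\C^\times$-action on the $\P$ appearing in the graph potential. Both $\tau_X^G$ and $\tau_{X \qu G}$ are defined by virtual integration over moduli of (gauged) maps whose graph data involve $\P$, and the standard $\C^\times$-action on $\P$ fixing $0$ and $\infty$ lifts to these stacks preserving their perfect obstruction theories. By definition the localized potentials $\tau^G_{X,\pm}$ and $\tau_{X\qu G,\pm}$ are the contributions of $0,\infty$ in the resulting fixed point formulas, the tangent weight at the fixed point playing the role of $\hbar$; so it suffices to refine \eqref{largerel} into an equality of fixed-point contributions.

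First I would upgrade Theorem~\ref{largearea} to a $\C^\times$-equivariant statement over the localized coefficient ring in which $\hbar$ is inverted. Every input of that theorem is $\C^\times$-equivariant: the energy bounds and compactness of Ziltener, the Hitchin-Kobayashi correspondence for affine vortices, and the Behrend-Fantechi virtual class all respect the $\C^\times$-action on $\P$, which commutes with the scaling degeneration governing $\rho \to \infty$. Hence \eqref{largerel} holds equivariantly, as an identity of $\C^\times$-equivariant Novikov-valued distributions in $q$.

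Next I would apply virtual localization to each side and match the contributions of the two fixed points separately. On the quotient side the fixed loci are indexed by the distribution of degree and markings between $0$ and $\infty$; the piece concentrated at $\pm$ assembles into $\tau_{X\qu G,\pm}$, while the transport of classes from $QH_G(X)$ across the affine-vortex bubbles that form in the limit is precisely the quantum Kirwan morphism $\kappa_X^G$, producing the left-hand side $\tau_{X\qu G,\pm}\circ\kappa_X^G$. On the gauged side the analogous contribution assembles into $\tau^G_{X,\pm}$, valued in $QH_G(X)[[\hbar^{-1}]]$; its output evaluation lands in the semistable locus, so the remaining passage to the quotient is the \emph{classical} Kirwan restriction $\kappa_X^{G,\on{class}}$, all quantum corrections having already been absorbed into $\tau^G_{X,\pm}$. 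Equating these two bookkeepings of the affine-vortex contributions and letting $\rho \to \infty$ gives the identity.

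The main obstacle is to justify exchanging the adiabatic limit with virtual localization, that is, to show the individual fixed-point contributions converge as distributions in $q$ and that the equivariant form of \eqref{largerel} specializes term-by-term to the $\pm$ pieces. This rests on a $\C^\times$-equivariant version of Ziltener's uniform energy estimates together with a careful analysis of where affine vortices bubble in the limit: one must check that the vortex bubbles attach only at the interior markings, where they feed into $\kappa_X^G$, and never along the principal component over a fixed point, which must remain a genuine gauged map restricting to the quotient classically. Once this separation of the quantum correction (the affine vortices) from the classical restriction at the fixed point is established, the localized relation follows by matching fixed-point contributions in the equivariant adiabatic limit theorem.
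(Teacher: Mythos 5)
Your geometric intuition about where the quantum and classical Kirwan maps come from is in the right place, but the central deductive step of your argument fails. You propose to establish a $\C^\times$-equivariant form of Theorem~\ref{largearea} and then ``apply virtual localization to each side and match the contributions of the two fixed points separately.'' That matching is not a consequence of any equivariant upgrade of \eqref{largerel}: localization expresses each graph potential through the contributions from $0$ and $\infty$ paired against each other --- this is exactly the factorization $\tau_X^G = (\tau_{X,-}^G,\tau_{X,+}^G)$ stated in the paper --- and an equality of the assembled potentials does not determine the individual factors. The localized theorem is strictly stronger than the equivariant version of \eqref{largerel}, and the paper does not derive it by localizing the total identity. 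Instead the mechanism behind both adiabatic limit theorems is the divisor class relation of Proposition~\ref{rhoequiv} in the moduli of scaled curves, $[\iota_\rho(\ol{M}_n(C))] = \sum [D_{[I_1,\ldots,I_r]}]$, pulled back to the proper moduli of scaled gauged maps/vortices (Theorem~\ref{scaledproper}) and packaged as a $2$-morphism of traces in the sense of Definition~\ref{2morphism}; the localized statement is obtained by running this master-space argument directly at the level of the fixed-point contributions, i.e.\ from the $S^1$-equivariant refinement of the divisor relation with insertions at $0,\infty$. In that argument the divisor at infinite scaling factors as the fiber product $\ol{M}_r^K(C,X)_\infty \times_{(I_{X \qu K})^r} \prod_{j=1}^r \ol{M}_{|I_j|,1}^K(\bA,X)$, so the affine-vortex factors attached at the markings produce $\kappa_X^G$ at the inputs, while the principal component at infinite scaling maps to the semistable locus and its evaluation at the fixed point yields $\kappa_X^{G,\on{class}}$. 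Your claim that ``vortex bubbles attach only at the interior markings, never along the principal component over a fixed point'' is precisely this structure of the infinite-scaling divisor; in your write-up it is asserted where it must be derived, and it is the entire content of the asymmetry between the two sides of the identity (one must also track the $S^1$-equivariant extension $\kappa_{X,G}: QH_G(X) \to QH(X \qu G)[[\hbar]]$ mentioned in the paper, since the affine scaled lines at a rotation-fixed point carry a residual circle action).

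A second, related misreading concerns the nature of $\lim_{\rho \to \infty}$. This is not an analytic limit to be exchanged with localization: the identity holds for each power of $q$ separately once $\rho$ is large enough, because the Mundet stability condition depends only on the chamber of $\rho[\omega_{X,G}]$ (and one perturbs to the area form $(\rho_0 + |\rho|)\omega_C$ to avoid reducibles, as in the paper's final Remark). The proof is then the rational equivalence $[\rho^{-1}(0)] = [\rho^{-1}(\infty)]$ on a single proper master space carrying a perfect obstruction theory. The symplectic estimates --- Ziltener's annulus lemma, Lemma~\ref{bconnect} --- enter only to prove properness of that master space, not to control any limiting process; so your proposed ``$\C^\times$-equivariant version of Ziltener's uniform energy estimates'' addresses a non-issue, while the ingredient your argument actually needs, namely the equivariant lift of Proposition~\ref{rhoequiv} together with the identification of the fixed-point contributions of the infinite-scaling divisor, is missing.
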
 

In other words, after composition with the quantum Kirwan map the
localized graph potential is equal to the localized gauged potential.
The result also holds in the ``twisted case'', that is, after
inserting the Euler class of the index of an equivariant bundle on the
target, which in good cases describes the localized graph potentials
of complete intersections.  In this way one obtains formula similar to
the ``mirror formulas'' of Givental \cite{gi:eq}, Lian-Liu-Yau
\cite{lly:mp1}, Iritani \cite{iri:gmt} and others for localized graph
potentials \cite{gi:eq} in the toric case, but now for arbitrary
geometric invariant theory quotients.  However, the approach here is
different from that of \cite{gi:eq}, \cite{lly:mp1} etc. in that the
``mirror map'' is expressed as integrals over moduli spaces, while the
approach of \cite{gi:eq}, \cite{lly:mp1} etc.  solves for the ``mirror
map'' as the solution to an algebraic equation.  Recently other
authors have given geometric interpretations of the mirror map
\cite{lau:open}, \cite{ciocan:genuszerowall}.  

Various applications have been developed jointly with E. Gonzalez.  We
were rather surprised to discover that many of the ``standard
formulas'' from classical equivariant symplectic geometry generalize
to the quantum case by substituting the quantum Kirwan morphism for
the classical Kirwan map; this is rather unexpected since all of these
formulas involve functoriality of cohomology in some way which is
generally lacking in the quantum setting.  For example, there is (i) a
wall-crossing ``quantum Kalkman'' formula for Gromov-Witten invariants
under variation of quotient, including invariance in the case of
crepant flops (ii) an abelianization ``quantum Martin'' formula
relating Gromov-Witten invariants of quotients by connected reductive
groups and their maximal tori, first suggested by Hori-Vafa
\cite[Appendix]{ho:mi} and Bertram-Ciocan-Fontanine-Kim \cite{be:qu}
and (iii) a quantum version of Witten's non-abelian localization
principle, relating the equivariant quantum cohomology correlators for
$X$ with the quantum cohomology correlators for $X \qu G$.

We remark that Ciocan-Fontanine-Kim-Maulik \cite{cf:st} have
introduced a different notion of gauged Gromov-Witten invariants of
quotients of affine varieties which works in any genus.  A symplectic
interpretation of these invariants has recently been given by
Venugopalan \cite{venu:cyl}.  From this point of view it seems that
the Ciocan-Fontanine-Kim-Maulik invariants are a higher genus
generalization of the ``large area chamber'' gauged Gromov-Witten
invariants investigated here.  See \cite{ciocan:hg},
\cite{ciocan:bigI} for more recent work.

\section{Traces and morphisms of cohomological field theory
  algebras}

To state the main result precisely, we explain what it means to have a
``commutative diagram'' of cohomological field theories.  In this
section we describe the moduli spaces of stable curves (complexified
associahedron in genus zero), stable parametrized curves (complexified
cyclohedron) and stable affine scaled curves (complexified
multiplihedron), which lead to the notion of CohFT algebra, trace on a
CohFT algebra, and morphism of CohFT algebras respectively, in analogy
with the theory of \ainfty spaces, morphisms, and traces.  Then we
introduce notions of compositions of morphisms and traces, or
morphisms of CohFT algebras, which are analogous to the composition of
the corresponding \ainfty notions.  This makes CohFT algebras into a
kind of $\infty$-category.  Notably, we do not have a version of
complexified multiplihedron for higher genus curves, which is why the
theory here is restricted to genus zero.
 
\subsection{Complexified associahedron and CohFT algebras} 

Moduli spaces of stable curves were introduced by Mayer and Mumford
\cite{woodshole}, and further studied in Deligne-Mumford
\cite{dm:irr}.  Moduli of stable marked curves were studied by
Grothendieck in 1968 and later by Knudsen \cite{kn:proj2}.  In this
section we describe these compactifications and the notion of {\em
  cohomological field theory} introduced by Kontsevich-Manin, see
\cite{man:fro}.  We remark that since the notion of stack is not
introduced until \cite[Section 4]{qk2}, we avoid it until then and
adopt the point of view that the moduli spaces are just topological
spaces.  First we describe stable curves.

\begin{definition}  
 \label{stablecurves0}
Let $n \ge 0$ be an integer.
\begin{enumerate} 
\item {\rm (Nodal curves)} An {\em $n$-marked nodal curve} consists of
  a projective nodal curve $C$ with an $n$-tuple of distinct,
  non-singular points $ \ul{z} = (z_1,\ldots, z_n) \in C^n$.  An {\em
    isomorphism} of $n$-marked nodal curves $(C,\ul{z}), (C',\ul{z}')$
  is an isomorphism $\phi: C \to C'$ such that $\phi(z_i) = z_i'$ for
  $i = 1,\ldots, n$.
\item 
{\rm (Stable curves)}
 A nodal $n$-marked curve $C = (C,z)$ is {\em stable}
iff $C$ has finite automorphism group.  That is, each genus zero
component has at least three {\em special points} (nodes or markings)
and each genus one component has at least one special point.  Note
that we do not require $C$ to be connected. 
\item {\rm (Modular graphs)} The combinatorial type $\Gamma$ of a
  stable curve is a {\em modular graph}: 
\begin{enumerate} 
\item {\rm (Graph)} an unoriented graph $\Gamma =
  (\Ve(\Gamma),\Edge(\Gamma))$ whose vertices correspond to
  irreducible components of $C$, finite edges $\Edge_{<
    \infty}(\Gamma)$ to nodes, and semi-infinite edges
  $\Edge_\infty(\Gamma)$ to markings, equipped with a 
\item {\rm (Genus function)} $\Z_{\ge 0}$-valued function $g:
  \Ve(\Gamma) \to \Z_{\ge 0}$ recording the genus of each irreducible
  component of $C$ and 
\item {\rm (Labelling of semi-infinite edges)} a bijection $l:
  \Edge_\infty(\Gamma) \to \{ 1, \ldots n \}$ of the semi-infinite
  edges with labels $1,\ldots, n$. 
\end{enumerate} 
A modular graph is {\em stable} if it corresponds to a stable curve.
That is, each vertex with label $0$ resp. $1$ has valence at least $3$
resp $1$.
\end{enumerate} 
\end{definition} \vskip .1in

Any stable curve $C$ has a {\em universal deformation} given by a
family of curves $\pi: C_S \to S$ over a parameter space $S$ uniquely
defined up to isomorphism, see for example \cite[p. 184]{ar:alg2}.
Let $\ovl{M}_{g,n}$ denote the set of isomorphism classes of connected
genus $g$, $n$-marked stable curves.  More naturally one should
consider the moduli {\em stack} of stable curves but we put off
discussion of stacks to \cite[Section 4]{qk2}.  In the case of genus
zero curves the universal deformation, and topology on $\ovl{M}_{g,n}$,
have a simple description \cite[p. 184]{ar:alg2}, \cite[Appendix
  D]{ms:jh}: For any marking $z_i$ and irreducible component $C_j$ of
$C$ we denote by $z_i^j$ the node in $C_j$ connecting to the
irreducible component of $C$ containing $z_i$, or $z_i$ if $z_i$ is
contained in $C_j$.

\begin{definition} {\rm (Convergence of a sequence
of stable curves)} A sequence $[C_\nu]$ converges to $[C]$ in
  $\ovl{M}_{g,n}$ if $C_\nu$ is isomorphic to $\pi^{-1}(s_\nu)$ for a
  sequence $s_\nu$ converging to $s$ in the base $S$ of the universal
  deformation.  Explicitly, if $g = 0$, a sequence
  $[(C_\nu,z_{1,\nu},\ldots,z_{n,\nu})]$ with smooth domain $C_\nu$
  converges to $[(C,z_1,\ldots,z_n)]$ if there exists, for each
  irreducible component $C_j$ of $C$, a sequence of holomorphic
  isomorphisms $\phi_{j,\nu}: C_j \to C_\nu$ such that
\begin{enumerate}
\item {\rm (Limit of a marking)} for all $i,j$, $\lim_{\nu \to \infty}
  \phi_{j,\nu}^{-1}(z_{i,\nu}) = z_i^j$; and 
\item {\rm (Limit of a different parametrization)} for all $j \neq k$,
  $\lim_{\nu \to \infty} \phi_{j,\nu} \phi_{k,\nu}^{-1}$ has limit the
  constant map with value the node of $C_j$ connecting to $C_k$.
\end{enumerate} 
\end{definition} \vskip .1in 

With the topology induced by this notion of convergence,
$\ovl{M}_{g,n}$ is compact and Hausdorff (and in fact, a projective
variety \cite{dm:irr}.)  For any possible disconnected graph $\Gamma$
we denote by $M_{g,n,\Gamma}$ the space of isomorphism classes of
curves of combinatorial type $\Gamma$ with $n$ semiinfinite edges and
total genus $g$, and $\ovl{M}_{g,n,\Gamma}$ its closure.  If $\Gamma =
\Gamma_0 \sqcup \Gamma_1$ is a disjoint union then
$\ovl{M}_{g,n,\Gamma} = \ovl{M}_{g,n,\Gamma_0} \times
\ovl{M}_{g_1,n_1,\Gamma_1}$.  The moduli spaces of stable marked curves
$\ovl{M}_{n,\Gamma}$ satisfy a natural functoriality with respect to
morphisms of modular graphs $\Gamma$.

\begin{definition}  {\rm (Morphisms of modular graphs)} 
A {\em morphism} of modular graphs $\Upsilon: \Gamma \to \Gamma'$ is a
surjective morphism of the set of vertices $\Ve(\Gamma) \to
\Ve(\Gamma')$ obtained by combining the following: (these are called
   {\em extended isogenies} in Behrend-Manin \cite{bm:gw})
\begin{figure}[h]
\begin{picture}(0,0)%
\includegraphics{collapseg.pstex}%
\end{picture}%
\setlength{\unitlength}{4144sp}%
\begingroup\makeatletter\ifx\SetFigFontNFSS\undefined%
\gdef\SetFigFontNFSS#1#2#3#4#5{%
  \reset@font\fontsize{#1}{#2pt}%
  \fontfamily{#3}\fontseries{#4}\fontshape{#5}%
  \selectfont}%
\fi\endgroup%
\begin{picture}(5874,1550)(2239,-2049)
\put(2963,-1020){\makebox(0,0)[lb]{{{{$g_1$}%
}}}}
\put(3828,-1020){\makebox(0,0)[lb]{{{{$g_2$}%
}}}}
\put(7491,-1350){\makebox(0,0)[lb]{{{{$g_1 + g_2$}%
}}}}
\end{picture}%

\caption{Collapsing an edge}
\label{collapsefig2}
\end{figure}

\begin{enumerate} 
\item $\Upsilon$ {\em collapses an edge} if the map on vertices
  $\Ve(\Upsilon): \Ve(\Gamma) \to \Ve(\Gamma')$ is a bijection except
  for a single vertex $v' \in \Ve(\Gamma')$ which has two pre-images
  connected by an edge in $\Edge(\Gamma)$, and $\Edge(\Gamma') \cong
  \Edge(\Gamma) - \{ e \}$.  The genus function on $\Gamma'$ is
  obtained by push-forward that is, $g(v') = \sum_{\Upsilon(v) = v'}
  g(v)$.

\begin{figure}[ht]
\begin{center} 
\begin{picture}(0,0)%
\includegraphics{loopg.pstex}%
\end{picture}%
\setlength{\unitlength}{4144sp}%
\begingroup\makeatletter\ifx\SetFigFont\undefined%
\gdef\SetFigFont#1#2#3#4#5{%
  \reset@font\fontsize{#1}{#2pt}%
  \fontfamily{#3}\fontseries{#4}\fontshape{#5}%
  \selectfont}%
\fi\endgroup%
\begin{picture}(3748,1128)(2239,-1627)
\put(2767,-879){\makebox(0,0)[lb]{\smash{{\SetFigFont{5}{6.0}{\rmdefault}{\mddefault}{\updefault}$g$}%
}}}
\put(5900,-879){\makebox(0,0)[lb]{\smash{{\SetFigFont{5}{6.0}{\rmdefault}{\mddefault}{\updefault}$g+1$}%
}}}
\end{picture}%
\end{center} 
\caption{Collapsing a loop}
\label{collapseloop}
\end{figure}

\item $\Upsilon$ {\em collapses a loop} if the map on vertices is a
  bijection and $\Edge(\Gamma') \cong \Edge(\Gamma) - \{ e \}$ where $e$
  is an edge connecting a vertex $v$ to itself.  Then the genus
  functions on $\Gamma,\Gamma'$ are identical except at $v$ where
  $g'(v) = g(v) + 1$.

\item $\Upsilon$ {\em cuts an edge} $e \in \Edge(\Gamma)$ if the map
  on vertices is a bijection, but $\Edge(\Gamma') \cong \Edge(\Gamma) - \{
  e \} + \{ e_+,e_- \} $ where $e_\pm$ are semiinfinite edges attached
  to the vertices contained in $e$.  Any morphism $\Upsilon: \Gamma
  \to \Gamma'$ cutting an edge induces an isomorphism $
  \ovl{M}(\Upsilon): \ovl{M}_{g,n,\Gamma'} \to \ovl{M}_{g,n,\Gamma} $
  obtained by identifying the markings corresponding to the edges
  $e_\pm$ in $\Gamma'$.

\begin{figure}[ht]
\begin{picture}(0,0)%
\includegraphics{forget.pstex}%
\end{picture}%
\setlength{\unitlength}{4144sp}%
\begingroup\makeatletter\ifx\SetFigFontNFSS\undefined%
\gdef\SetFigFontNFSS#1#2#3#4#5{%
  \reset@font\fontsize{#1}{#2pt}%
  \fontfamily{#3}\fontseries{#4}\fontshape{#5}%
  \selectfont}%
\fi\endgroup%
\begin{picture}(5469,1280)(2239,-1779)
\put(2837,-992){\makebox(0,0)[lb]{{{{$g$}%
}}}}
\put(3612,-951){\makebox(0,0)[lb]{{{{$0$}%
}}}}
\put(6900,-992){\makebox(0,0)[lb]{{{{$g$}%
}}}}
\end{picture}%
\caption{Forgetting a tail}
\label{forgetfig2}
\end{figure}

\item $\Upsilon$ {\em forgets a tail} (semiinfinite edge) $e \in
  \Edge^\infty(\Gamma)$ if the map on vertices is a bijection, but
  $\Edge(\Gamma')\cong \Edge(\Gamma) - \{ e \}$.  In this case there is a
  morphism $\ovl{M}(\Upsilon): \ovl{M}_{g,n,\Gamma} \to
  \ovl{M}_{g,n,\Gamma'}$ obtained by forgetting the corresponding
  marking and collapsing any unstable components.
\end{enumerate} 
\end{definition} \vskip .1in

\begin{proposition}  The boundary of $M_{g,n,\Gamma}$ in
$\ovl{M}_{g,n,\Gamma}$ is the union of spaces $M_{g,n,\Gamma'}$ such
  that $\Gamma$ is obtained from $\Gamma'$ by collapsing edges and
  forgetting loops.  The boundary of $\ovl{M}_{g,n}$ is a union of the
  following subspaces (which will be {\em divisors} with respect to
  the algebraic structure of the moduli space introduced later)
\begin{enumerate} 
\item {\rm (Non-separating node)} if $2g +n > 3$, a subspace
$$\iota_{g-1,n+2}: D_{g-1,n+2} \to \ovl{M}_{g,n}$$
equipped with an isomorphism 
$$ \varphi_{g-1,n+2}: D_{g-1,n+2} \to \ovl{M}_{g-1,n+2} .$$  
The inclusion is obtained by identifying the last two marked points.
\item {\rm (Separating node)} for each splitting $g = g_1 + g_2, \{ 1
  ,\ldots , n \} = I_1 \cup I_2$ with $2g_j + |I_j| \ge 3, j = 1,2$, a
  subspace
$$\iota_{g_1+g_2,I_1 \cup I_2}: D_{g_1+g_2,I_1 \cup I_2} \to
\ovl{M}_{g,n}$$
corresponding to the formation of a separating node,
splitting the surface into pieces of genus $g_1,g_2$ with markings
$I_1,I_2$, equipped with an isomorphism 
$$ \varphi_{g_1+g_2,I_1 \cup I_2}: D_{g_1 + g_2,I_1 \cup I_2} \to
\ovl{M}_{g_1,|I_1|+1} \times \ovl{M}_{g_2,|I_2|+1}$$ 
(except that on the level of orbifolds in the case $I_1 = I_2 =
\emptyset$ and $g_1 = g_2$ there is an additional automorphism
exchanging the components.)
\end{enumerate} 
\end{proposition}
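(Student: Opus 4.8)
\emph{Proof proposal.}

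The plan is to reduce both assertions to the local structure of the universal deformation $\pi: C_S \to S$ of a stable curve, combined with the convergence criterion of the preceding definition and the inclusions $\ol{M}(\Upsilon)$ supplied by the Remark. First I would recall the deformation theory of a nodal curve $C$ of type $\Gamma'$: its nodes are indexed by the finite edges $\Edge_{<\infty}(\Gamma')$, the base $S$ is smooth near $0 = [C]$, and it carries local coordinates splitting as a product of one \emph{smoothing parameter} $t_e$ per finite edge $e$ together with parameters deforming the pointed normalization of $C$. Near each node the family is the standard model $\{xy = t_e\}$, so $\pi^{-1}(s)$ keeps the node labelled by $e$ exactly when $t_e(s)=0$. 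It follows that the locally closed strata through $0$ are indexed by subsets $E_0 \subseteq \Edge_{<\infty}(\Gamma')$, the stratum on which $t_e=0$ precisely for $e\in E_0$ consisting of curves whose type is obtained from $\Gamma'$ by collapsing every edge outside $E_0$, a self-edge being collapsed as a loop and an edge between distinct vertices as an ordinary edge.

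For the first assertion I would prove $\ol{M}_{g,n,\Gamma} = \bigcup M_{g,n,\Gamma'}$, the union over types $\Gamma'$ from which $\Gamma$ arises by collapsing edges and loops. For $\supseteq$, such a collapsing morphism $\Upsilon: \Gamma' \to \Gamma$ yields by the Remark an inclusion $\ol{M}(\Upsilon): \ol{M}_{g,n,\Gamma'} \hookrightarrow \ol{M}_{g,n,\Gamma}$, so each $M_{g,n,\Gamma'}$ with $\Gamma' \neq \Gamma$ sits in the boundary. For $\subseteq$, I would take $[C']$ of type $\Gamma'$ in $\ol{M}_{g,n,\Gamma}$; by the convergence criterion it is a limit $\lim_\nu [\pi^{-1}(s_\nu)]$ of type-$\Gamma$ curves in the universal deformation of $C'$, forcing the $s_\nu$ into the locally closed stratum of $S$ parametrizing curves of type $\Gamma$, which by the local description exists only when $\Gamma$ is obtained from $\Gamma'$ by collapsing edges and loops. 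Discarding the case $\Gamma'=\Gamma$ then gives the boundary as the stated union.

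For the second assertion the interior $M_{g,n}\subset\ol{M}_{g,n}$ is the stratum whose graph is a single genus-$g$ vertex with no finite edges, so the boundary is the union of all strata with at least one node; since the local model shows each node raises the codimension by one, the divisorial strata are exactly those with a single finite edge. A connected stable graph with one finite edge is of one of two kinds. Either the edge is a loop at one vertex of genus $g-1$ bearing all $n$ markings (the case $2g+n>3$): normalizing at the node and labelling the two preimages as the last two markings identifies this stratum with $\ol{M}_{g-1,n+2}$, giving $\varphi_{g-1,n+2}$, and the inverse gluing of the cut edge gives $\iota_{g-1,n+2}$. Or the edge joins two vertices of genera $g_1,g_2$ with $g_1+g_2=g$ and markings partitioned $I_1\sqcup I_2$, each side satisfying $2g_j+|I_j|\ge 3$: separating at the node and adding the new branch point as an extra marking on each side identifies the stratum with $\ol{M}_{g_1,|I_1|+1}\times\ol{M}_{g_2,|I_2|+1}$, giving $\varphi$ and $\iota$ as before. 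In the symmetric subcase $g_1=g_2$, $I_1=I_2=\emptyset$ the two factors may be interchanged, which is the extra automorphism recorded at the orbifold level.

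The main obstacle, and the only substantive point, is the local product (normal-crossing) structure claimed in the first paragraph, namely that the smoothing parameters $t_e$ are genuine coordinates transverse to the equisingular deformations. I would obtain this from the exact sequence relating the deformations of $C$, the deformations of its pointed normalization, and the one-dimensional smoothing space attached to each node, yielding $\dim S = 3g-3+n$ with one transverse smoothing direction per node. Everything else — the codimension count, the identification of single-edge strata with products of smaller moduli via cutting and gluing, and the bookkeeping of stability and of the exchange automorphism — is then routine. Since the present section treats these spaces purely topologically, it suffices to invoke the local models $\{xy=t_e\}$ and the convergence criterion, deferring the full deformation-theoretic and stack-theoretic statements to the later reference.
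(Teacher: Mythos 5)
Your proof is correct and coincides with the justification the paper leaves implicit: the proposition is stated there without proof, resting on exactly the two ingredients you deploy, namely the universal-deformation local models $\{xy = t_e\}$ with one smoothing coordinate per node (the description cited at \cite[p.~176]{ar:alg2}), combined with the convergence definition and the inclusions $\ol{M}(\Upsilon)$ from the preceding Remark, and your identification of the one-edge strata by normalizing at the node, including the exchange automorphism when $g_1=g_2$, $I_1=I_2=\emptyset$, is the intended standard argument. One inherited quibble: stability at a separating node actually gives $2g_j+|I_j|\ge 2$, since the node itself counts as a special point, so your claim that stability forces $2g_j+|I_j|\ge 3$ is not a consequence of stability but reproduces an off-by-one already present in the proposition's own indexing (which as written omits, e.g., divisors with a genus-zero piece carrying exactly two markings).
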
  

The pull-back $\iota_{g_1+g_2,I_1 \cup I_2}^* \beta$ of any class
$\beta \in H(\ovl{M}_{g,n})$ has a K\"unneth decomposition
\begin{equation} \label{kunneth1}
 \iota_{g_1+g_2,I_1 \cup I_2}^* \beta = 
\sum_{j \in J} \beta_{1,j} \otimes \beta_{2,j} \end{equation}
for some index set $J$ and classes $\beta_{k,j} \in
H(\ovl{M}_{g_k,|I_k|+1})$.  Each boundary divisor $D_{g-1,n+2}$ or
$D_{g_1+g_2,I_1 \cup I-2}$ has a homology class $\ovl{M}_{g,n}$, and
since $\ovl{M}_{g,n}$ is a rational homology manifold each of these
homology classes has a dual class $\gamma_{g-1,n+2}$
resp. $\gamma_{g_1+g_2,I_1 \cup I_2}$ in $H^2(\ovl{M}_{g,n},\Q)$.  For
the following, see Manin \cite{man:fro}.

\begin{definition}
 \label{cohft}
A {\em cohomological field theory} (CohFT) with values in a field
$\Lambda$ is a $\Z_2$-graded vector space $V$ equipped with a
symmetric non-degenerate bilinear form $ V \times V \to \Lambda$ and
collection of $S_n$-invariant (with Koszul signs) {\em correlators}
$$ V^{n} \times H(\ovl{M}_{g,n}) \to \Lambda, \ \ (\alpha,\beta)
\mapsto \langle \alpha ; \beta \rangle_{g,n}, \ g, n \ge 0 $$
%
%for $2g + n \ge 3$ 
where by convention $H(\ovl{M}_{g,n}) = \Lambda$ if $\ovl{M}_{g,n} =
\emptyset$, satisfying the following two splitting axioms:
\begin{enumerate} 
\item {\rm (Non-separating node)} 
if $g \ge 1$ then 
$$ \langle  \alpha ; \beta \cup \gamma_{g-1,n+2} \rangle_{g,n}  
= \sum_k \langle \alpha, \delta_k, \delta^k; \iota_{g-1,n+2}^* \beta
  \rangle_{g-1,n+2} $$
where $\delta_k,\delta^k$ are dual bases for $V$; 
\item {\rm (Separating node)} if $2g + n \ge 4$, $I_1 \cup I_2$ is a
  partition of $\{1 ,\ldots, n \}$, and $g = g_1 + g_2$ with $2g_i +
  |I_i| \ge 3$ for $i=1,2$ then
$$ \langle \alpha ; \beta \cup \gamma_{g_1+g_2,I_1 \cup I_2}
  \rangle_{g,n} = \sum_k \langle ( \alpha_i )_{i \in I_1}, \delta_k ;
  \cdot \rangle _{g_1,|I_1|+1} \langle (\alpha_i)_{i \in I_2}, \delta^k ; \cdot
  \rangle_{g_2,|I_2|+1} ( \iota_{g_1 + g_2,I_1 \cup I_2}^* \beta) $$
where the dots indicate insertion of the K\"unneth components of
$\iota_{g_1 + g_2,I_1 \cup I_2}^* \beta$, $\delta_k,\delta^k$ are dual
bases for $V$, and there is an additional factor of $2$ in the
exceptional case $g_1 = g_2$, $I_1 = I_2 = \emptyset$ arising from the
additional automorphism.
\end{enumerate} 
\end{definition} \vskip .1in

That is, if $\beta$ is as in \eqref{kunneth1} then
$$ \langle \alpha ; \beta \cup \gamma_{g_1+g_2,I_1 \cup I_2}
\rangle_{g,n} = \sum_{j \in J,k} \langle (\alpha_i)_{i \in I_1}, \delta_k ;
\beta_{1,j} \rangle _{g_1,|I_1|+1} \langle (\alpha_i)_{i \in I_2}, \delta^k ;
\beta_{2,j} \rangle _{g_2,|I_2|+1} .$$

The correlators of any cohomological field theory define a family of
associative algebra structures.  In the standard axiomatization these
are part of the associated {\em Frobenius manifold structure} on $V$
\cite{man:fro}.  However for the purposes of functoriality it is
helpful to keep the ``algebra'' and ``metric'' parts of this structure
separate, and instead we define the following:

\begin{definition}  A {\em CohFT algebra} consists of a $\Z_2$-graded vector
space $V$ and a collection of $S_n$-invariant (with Koszul signs)
multilinear maps
$$ \mu^n: V^n \times H(\ovl{M}_{0,n+1}) \to V, n \ge 2 $$
satisfying the splitting axiom for any subset $I \subset \{ 1,\ldots,
n \}$ of order at least two:
\begin{equation} \label{splitalg}
 \mu^n( \alpha_1,\ldots, \alpha_n; \gamma_{0,I \cup (I^c \cup \{ 0
   \})} \cup \beta) = \sum_{j} \mu^{n - |I| + 1}(\alpha_i, i \notin I,
 \mu^{|I|} (\alpha_i, i\in I; \beta_{1,j});
 \beta_{2,j}) \end{equation}
where $\iota_I^* \beta = \sum_j \beta_{1,j} \otimes \beta_{2,j}$ is the K\"unneth
decomposition of the restriction of $\beta$ as in \eqref{kunneth1}. 
\end{definition} \vskip .1in 

\begin{proposition}  Any CohFT gives rise to a CohFT algebra. 
\end{proposition}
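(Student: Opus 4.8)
The plan is to manufacture the CohFT algebra maps $\mu^n$ out of the genus-zero correlators by using the non-degenerate bilinear form to convert the last input into an output. Concretely, for $n \ge 2$, inputs $\alpha_1,\dots,\alpha_n \in V$ and $\beta \in H(\ol{M}_{0,n+1})$, I would define $\mu^n(\alpha_1,\dots,\alpha_n;\beta)$ to be the unique element of $V$ characterized by
$$ \langle \mu^n(\alpha_1,\dots,\alpha_n;\beta),\, v\rangle = \langle \alpha_1,\dots,\alpha_n, v;\beta\rangle_{0,n+1}, \qquad v \in V, $$
existence and uniqueness being guaranteed by non-degeneracy of the form. Equivalently, fixing a basis $(\delta_k)$ with dual basis $(\delta^k)$, one has $\mu^n(\alpha_1,\dots,\alpha_n;\beta) = \sum_k \langle \alpha_1,\dots,\alpha_n,\delta_k;\beta\rangle_{0,n+1}\,\delta^k$. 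Multilinearity is immediate, and $S_n$-invariance (with Koszul signs) of $\mu^n$ in its $n$ input slots follows by restricting the $S_{n+1}$-invariance of the $(n+1)$-point correlator to those permutations fixing the distinguished output marking. The maps are defined for $n \ge 2$, exactly the range in which $\ol{M}_{0,n+1}$ is stable.

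The substance is the splitting axiom \eqref{splitalg}, which I would prove by pairing both sides against an arbitrary $v \in V$ and invoking non-degeneracy. By definition, the left-hand side paired with $v$ equals $\langle \alpha_1,\dots,\alpha_n, v;\gamma_{0,I \cup (I^c \cup \{0\})}\cup\beta\rangle_{0,n+1}$. Here the distinguished output marking $0$ sits on the $I^c$ side of the separating divisor, whose two components carry the marking sets $I$ and $I^c \cup \{0\}$; for $2 \le |I| \le n-1$ both factor spaces $\ol{M}_{0,|I|+1}$ and $\ol{M}_{0,|I^c|+2}$ are stable, so the separating-node axiom of Definition \ref{cohft} applies and rewrites this correlator as
$$ \sum_{j,k} \langle (\alpha_i)_{i \in I},\delta_k;\beta_{1,j}\rangle_{0,|I|+1}\; \langle (\alpha_i)_{i \in I^c}, v, \delta^k;\beta_{2,j}\rangle_{0,|I^c|+2}, $$
where $\iota_I^*\beta = \sum_j \beta_{1,j}\otimes\beta_{2,j}$ is the K\"unneth decomposition of \eqref{splitalg}, matched with the one produced here via the identification of the divisor with the product of the two factor moduli spaces.

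Now I would recognize each factor as a value of $\mu$. The first factor is $\langle \mu^{|I|}((\alpha_i)_{i\in I};\beta_{1,j}),\delta_k\rangle$ directly from the defining relation, and the second, after moving $v$ into the output slot (tracking Koszul signs), is $\langle \mu^{n-|I|+1}((\alpha_i)_{i\in I^c},\delta^k;\beta_{2,j}),\, v\rangle$. Summing over $k$ and using the completeness relation $\sum_k \langle A,\delta_k\rangle\,\delta^k = A$ together with linearity of $\mu^{n-|I|+1}$ in the node slot collapses the sum over the node into a single insertion of $A = \mu^{|I|}((\alpha_i)_{i\in I};\beta_{1,j})$, giving
$$ \sum_j \langle \mu^{n-|I|+1}\big((\alpha_i)_{i\in I^c},\, \mu^{|I|}((\alpha_i)_{i\in I};\beta_{1,j});\,\beta_{2,j}\big),\, v\rangle. $$
Since this holds for every $v \in V$, non-degeneracy yields \eqref{splitalg}.

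The conceptual core is the single identity $\sum_k \langle A,\delta_k\rangle\delta^k = A$, which turns a gluing sum over a node into one application of $\mu$; everything else is the dictionary between correlators and $\mu$. I expect the main obstacle to be purely bookkeeping: keeping the Koszul signs consistent when the output insertion $v$ is shuffled into the last position and when the inputs are permuted, and verifying that the K\"unneth decomposition appearing in \eqref{splitalg} is literally the one produced by the separating-node axiom under the chosen identification of the boundary divisor $D_{0,I\cup(I^c\cup\{0\})}$ with $\ol{M}_{0,|I|+1}\times\ol{M}_{0,|I^c|+2}$. I would also note that the degenerate cases $|I|\in\{0,1,n\}$ are excluded precisely because the corresponding boundary divisor is absent, or because $\mu^1$ is undefined, consistent with the hypothesis $|I|\ge 2$ and the fact that the axiom is vacuous unless $n\ge 3$.
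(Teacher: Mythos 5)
Your proposal is correct and follows essentially the same route as the paper, whose one-line proof likewise restricts to genus zero, uses the non-degenerate pairing to dualize one slot of the correlator into the output, and observes that the splitting axiom \eqref{splitalg} is a special case of the separating-node axiom of Definition \ref{cohft}. Your explicit verification---inserting dual bases at the node and collapsing the sum via $\sum_k \langle A,\delta_k\rangle\,\delta^k = A$---is exactly the computation the paper leaves implicit.
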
 

\begin{proof}  By restricting to genus zero, and using duality to put
one factor of $V$ on the right.  The splitting axiom \eqref{splitalg}
is a special case of the splitting axiom in Definition \ref{cohft}.
\end{proof} 

\begin{remark}
\begin{enumerate} 
\item Manin \cite{man:fro} terms a set of such maps a {\em
  $\Comm_\infty$-structure} on $V$.  However we avoid this terminology
  since it isn't clear what this is a homotopy version of.
\item The CohFT algebra structure does not require a metric, so a
  CohFT may be thought of roughly speaking as a CohFT algebra plus a
  metric.  We remark that a more natural definition of CohFT may be
  obtained by allowing incoming and outgoing markings.  However, we
  prefer to write the classes on the left to emphasize the analogy
  with \ainfty spaces.
\item The various relations in $\ovl{M}_{0,n+1}$ give rise to relations
  on the maps $\mu^n$. In particular the map $\mu^2: V \times V \to
  V$ is associative, by the splitting axiom and the relation 
$ [D_{0,
      \{0, 3 \} \cup \{ 1,2 \}}] = [D_{0, \{ 0,1 \} \cup \{ 2, 3\} }]
   \in H^2(\ovl{M}_{0,4})  .$
\item The notion of CohFT algebra is the ``complex analog'' of the
  notion of \ainfty algebra in the following imprecise sense.  Denote
  by $\ovl{M}_{0,n}^\R$ the moduli space of projective nodal curves $C$
  equipped with an anti-holomorphic involution fixing the markings,
  that is, the moduli space of {\em stable $n$-marked disks} where the
  markings are not necessarily in cyclic order.  The symmetric group
  $S_n$ acts canonically on $\ovl{M}_{0,n}^\R$ by permuting the
  markings.  The quotient of $\ovl{M}_{0,n}^\R$ by the action of $S_n$
  is homeomorphic to the $(n-1)$-st  associahedron $\Assoc_{n-1}$
  introduced in Stasheff \cite{st:hs}.  An {\em \ainfty space} is a
  space $X$ equipped with a collection of maps 
$X^n \times \Assoc_n
  \to X, n \ge 2$
satisfying a splitting axiom for the restriction to the boundary, and
{\em \ainfty algebras} arise as spaces of chains on \ainfty spaces.
To obtain the notion of CohFT algebra we replace $X$ by a vector space
and $\Assoc_n$ by the cohomology of its complexification.  One could
also imagine a cochain-level version but there are reasons to expect
that this gives nothing new, see Teleman \cite{tel:tft2}
\item The notion of a genus zero CohFT can be repackaged in terms of a
  non-linear structure called a {\em Frobenius manifold}, which is a
  non-linear generalization of the notion of {\em Frobenius algebra}
  (unital algebra with compatible metric.)  Any genus zero CohFT (with
  unit and grading) gives rise to a Frobenius manifold whose potential
  is
$$ f: V \to \Lambda, \quad f(v) = \sum_{n \ge 3} \frac{1}{n!} \lan
v,\ldots, v; 1 \ran_{0,n} .$$
The third derivatives of $f$ give rise to a family of algebra
structures
$$\star_v : T_v V^2 \to T_v V .$$
These give rise to a family of connections depending on a parameter
$\hbar$,
$$ \nabla_\hbar: \Omega^0(V,TV) \to \Omega^1(V,TV), \quad
\nabla_{\hbar,\xi} \sigma (v) = (\d \sigma(\xi))(v) -
(1/\hbar) \xi \star_{v} \sigma(v) .$$
The associativity of $\star$ translates into the flatness of
$\nabla_\hbar$, so that locally there exist sections $\sigma: V \to
TV$ satisfying
\begin{equation} \label{qdiff}
{\rm (Quantum\ Differential\ Equation)} \quad  \hbar \partial_\xi \sigma (v) =  \xi \star_{v} \sigma, \forall v,\xi \in
 V. \end{equation}
For a full discussion of the correspondence between Frobenius
manifolds and CohFT's the reader is referred to Manin \cite{man:fro}.
\end{enumerate}
\end{remark} 

\subsection{Complexified cyclohedron and 
 traces on CohFT algebras}
\label{traces}

In this section we study the moduli spaces of stable marked {\em
  parametrized} curves.  These are a special case of moduli spaces of
stable maps (the degree one case) but we prefer to view them in a
different way, as a special case of the Fulton-MacPherson construction
\cite{fm:compact}.  We then discuss the associated notion of {\em
  trace} on a CohFT algebra.  Let $C$ be a smooth connected projective
curve.

\begin{definition} 
\begin{enumerate}
\item {\rm (Parametrized nodal curves)} A {\em $C$-parametrized nodal
  curve} is a (possibly disconnected) nodal curve $\hat{C}$ equipped
  with a morphism $u: \hat{C} \to C$ of homology class $u_* [\hat{C}]
  = [C]$ and with the same arithmetic genus.  That is, $\hat{C}$ is
  the union of irreducible components $C_0,\ldots,C_r$ where $u$ maps
  the {\em principal component} $C_0$ isomorphically onto $C$ and $u$
  maps the other irreducible {\em bubble components} $C_1,\ldots, C_r$
  onto points.  Since the arithmetic genus of $\hat{C}$ is the same as
  that of $C$, the bubble components must be rational.  A {\em
    marking} of a $C$-parametrized curve is an $n$-tuple $\ul{z} =
  (z_1,\ldots,z_n)$ of points in $\hat{C}^n$ distinct from the nodes
  and each other. An {\em isomorphism} of such curves is an
  isomorphism of nodal curves which induces the identity on $C$.
\item {\rm (Stable parametrized curves)} A $C$-parametrized curve is
  {\em stable} if it has no infinitesimal automorphisms, that is, each
  non-principal irreducible component of $\hat{C}$ has at least three
  marked or nodal points.
\item {\rm (Rooted forests)} Any $C$-parametrized curve has a {\em
  combinatorial type} which is a forest $\Gamma$ (finite collection of
  trees) with a distinguished {\em root vertex} corresponding to the
  principal component and a labelling of the semiinfinite edges given
  by a bijection $l: \Edge_\infty(\Gamma) \to \{ 1, \ldots, n \}$.  A
  rooted forest is {\em stable} if it corresponds to a stable
  parametrized curve, that is, each non-root vertex has valence at
  least three.
\end{enumerate} 
\end{definition} \vskip .1in 

The set $\ovl{M}_n(C)$ of isomorphism classes of connected stable
$C$-parametrized curves has a natural topology, similar to that of
$\ovl{M}_{0,n}$ in genus zero: The following can be taken as a
definition or a proposition using a suitable construction of the
universal deformation of a stable map to $C$:

\begin{definition}  {\rm (Convergence of a sequence of parametrized stable curves)} 
Suppose $C$ has genus $0$.  A sequence $[( \hat{C}_\nu,u_\nu)]$ with
smooth domain $\hat{C}_\nu$ {\em converges} to $[(\hat{C},u)]$ if
there exists, for each irreducible component $\hat{C}_j$ of the limit
$\hat{C}$, a sequence of holomorphic isomorphisms $\phi_{j,\nu}:
\hat{C}_j \to \hat{C}_\nu$ such that
\begin{enumerate}
\item {\rm (Limit of a marking)}  for all $i,j$,
$\lim_{\nu \to \infty} \phi_{j,\nu}^{-1}(z_{i,\nu}) = z_i^j$, 
the node in $C_j$ connecting to the irreducible component of $C$
containing $z_i$, or $z_i$ if $z_i$ is contained in $C_j$;
\item {\rm (Limit of a different parametrization)}  for all $j \neq k$, 
$\lim_{\nu \to \infty} \phi_{j,\nu} \phi_{k,\nu}^{-1}$ has limit
  the constant map with value the node of $C_j$ connecting to $C_k$; and
\item {\rm (Limit of the map)} for all $j$, $\lim \phi_{j,\nu}^* u_\nu
  = u | C_j $.
\end{enumerate} 
Convergence for nodal domains $\hat{C}_\nu$ is defined similarly, by
considering convergence on each irreducible component separately.  
\end{definition} \vskip .1in 

The definition for arbitrary genus is similar, but the maps
$\phi_{j,\nu}$ exist only after removing small neighborhoods of the
nodes.  The topology on $\ovl{M}_n(C)$ induced by this notion of
convergence is compact and Hausdorff, and a special case of the
Fulton-MacPherson compactification of configuration spaces considered
in \cite{fm:compact}.  The open stratum $M_n(C)$ of $\ovl{M}_n(C)$ is
the configuration space $\Conf_n(C)$ of $n$-tuples of distinct points
on $C$.

More generally for any rooted forest $\Gamma$ with $n$ semiinfinite
edges we denote by $M_{n,\Gamma}(C)$ the moduli space of isomorphism
classes with type $\Gamma$ and by $\ovl{M}_{n,\Gamma}(C)$ its closure.
The moduli spaces of stable marked curves $\ovl{M}_{n,\Gamma}(C)$
satisfy a natural functoriality with respect to morphisms of rooted
forests $\Gamma$.  We say that a {\em morphism of rooted forests} is a
morphism of modular graphs corresponding to the rooted forests mapping
the root vertex to the root vertex.
\begin{proposition} {\rm (Morphisms of moduli spaces associated
to morphisms of forests)}
\begin{enumerate}
\item 
Any morphism of rooted forests $\Upsilon: \Gamma \to \Gamma'$ induces a
morphism of moduli spaces $\ovl{M}(\Upsilon): \ovl{M}_{n,\Gamma}(C) \to \ovl{M}_{n,\Gamma'}(C) .$ 
\item 
The boundary of $M_{n,\Gamma}(C)$ is the union of spaces
$M_{n,\Gamma'}(C)$ such that there is a morphism of rooted forests
$\Gamma' \to \Gamma$ collapsing an edge.
\item If $\Gamma'$ is obtained from $\Gamma$ by cutting an edge, then
  there is an isomorphism $\ovl{M}_{n,\Gamma'}(C) \to
  \ovl{M}_{n,\Gamma}(C)$ identifying the vertices corresponding to the
  additional semi-infinite edges.
\end{enumerate} 
\end{proposition}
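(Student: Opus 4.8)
The plan is to reduce everything to the three elementary morphisms of rooted forests and to verify each directly against the notion of convergence of parametrized stable curves given above, in parallel with the unparametrized discussion of $\ol{M}_{g,n}$. First I would observe that a forest is acyclic, so no morphism of rooted forests collapses a loop; hence any $\Upsilon\colon \Gamma \to \Gamma'$ factors as a finite composition of morphisms each of which collapses one edge, cuts one edge, or forgets one tail. Since a composite of continuous maps is continuous, it suffices to construct $\ol{M}(\Upsilon)$ and check continuity in these three cases. Throughout I would use the defining feature of parametrized curves: a bubble component maps to a single point of $C$, so any operation carried out on bubbles leaves the parametrizing morphism $u$ unchanged, while the root component, being isomorphic to $C$, never loses stability.

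For part (a) with $\Upsilon$ collapsing an edge, the type $\Gamma$ has one more node than $\Gamma'$, and $M_{n,\Gamma}(C)$ sits inside $\ol{M}_{n,\Gamma'}(C)$ as the locus where that node remains unsmoothed; I would take $\ol{M}(\Upsilon)$ to be this inclusion of a closed stratum, whose continuity is immediate because a sequence converging in $\ol{M}_{n,\Gamma}(C)$, with reparametrizations $\phi_{j,\nu}$, converges with the same data in $\ol{M}_{n,\Gamma'}(C)$. The same local picture yields part (b): using the one-parameter smoothing model at each node, a sequence in $M_{n,\Gamma}(C)$ can degenerate only by forming additional rational bubbles as markings or nodes collide, and each such degeneration is recorded by a type $\Gamma'$ with one or more extra edges together with a collapsing morphism $\Gamma' \to \Gamma$; hence the boundary of $M_{n,\Gamma}(C)$ is the union of the $M_{n,\Gamma'}(C)$ obtained by collapsing edges.

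For part (a) with $\Upsilon$ forgetting a tail $e$, I would define $\ol{M}(\Upsilon)$ by deleting the marking labelled $e$ and then stabilizing, i.e.\ collapsing any bubble left with fewer than three special points. Well-definedness is clear: only bubbles can destabilize, collapsing a bubble does not alter $u$ since $u$ is constant there, and the outcome is again a stable $C$-parametrized curve of type $\Gamma'$. The hard part will be continuity across the boundary, where the forgotten point collides with another marking or a node in the limit; here I would argue exactly as for the Fulton--MacPherson forgetful projection \cite{fm:compact}, checking the convergence criterion component by component and building the reparametrizations on the stabilized limit from those on $\hat{C}_\nu$ after contracting the vanishing bubbles. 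I expect this continuity of stabilization to be the main obstacle, since it is the one place where the pointwise recipe must be matched to the topology near deeper strata.

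For part (c), cutting the edge $e$ disconnects $\Gamma$ into the subtree $\Gamma_1$ containing the root, which is again the type of a $C$-parametrized curve, and a complementary rational tree $\Gamma_2$ carrying the new tail; since every component of $\Gamma_2$ maps to one point of $C$, its moduli is an ordinary stable-curve space, so $\ol{M}_{n,\Gamma'}(C)$ is the product of $\ol{M}_{n_1,\Gamma_1}(C)$ with a factor $\ol{M}_{0,m}$. I would let $\ol{M}(\Upsilon)$ glue the two markings $e_+,e_-$ into a node, landing in the closed stratum $\ol{M}_{n,\Gamma}(C)$, with inverse given by normalization at that node, recording its two preimages as the new markings. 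Both directions preserve convergence by applying the convergence definition on each side of the node separately, so $\ol{M}(\Upsilon)$ is a homeomorphism. Finally, for $C$ of positive genus I would replace the global reparametrizations $\phi_{j,\nu}$ by ones defined away from small neighborhoods of the nodes, just as in the convergence definition, after which all of the above arguments apply verbatim.
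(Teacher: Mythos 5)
Your proposal is correct and follows essentially the route the paper intends: the paper's proof simply declares the statement ``standard from properties of moduli spaces of stable maps'' (citing the Fulton--MacPherson picture and noting the product decomposition $\ol{M}_{n,\Gamma}(C) \cong \ol{M}_{n_0,\Gamma_0}(C) \times \ol{M}_{0,n_1,\Gamma_1}$ for the cut-edge case), and your write-up fills in exactly that standard argument --- factoring into the elementary morphisms (legitimate here since morphisms of modular graphs are by definition composites of them and forests exclude loop collapses), checking each against the stated convergence criterion, and handling the forgetful map's stabilization as in Fulton--MacPherson. Your closing remark on replacing global reparametrizations by ones defined away from the nodes in positive genus likewise matches the paper's own caveat.
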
  
The proof is standard from properties of moduli spaces of stable maps.
Note that if $\Gamma' = \Gamma_0 \cup \Gamma_1$ is disconnected where
$\Gamma_0$ contains the root vertex then $\ovl{M}_{n,\Gamma}(C) \cong
\ovl{M}_{n_0,\Gamma_0}(C) \times \ovl{M}_{0,n_1,\Gamma_1}$ where $n_j$
is the number of semi-infinite edges of $\Gamma_j$.  Thus the boundary
of $\ovl{M}_{n,\Gamma}$ is the union of products of lower-dimensional
moduli spaces of $C$-parametrized stable curves and stable curves.

\begin{remark} {\rm (Relation to the cyclohedron)} 
Let $C$ be a projective line.  Any anti-holomorphic involution of $C$
induces an anti-holomorphic involution of $\ovl{M}_n(C)$, with fixed
point set $\ovl{M}_n(C)^\R$ identified with the moduli space of stable
parametrized $n$-marked disks.  The symmetric group $S_n$ acts by
permutation, and the quotient by $S_{n-1}$ is isomorphic to the subset
$\ovl{M}_n(C)^{\R,+}$ of $\ovl{M}_n(C)^\R$ such that the marked points
$z_0,\ldots,z_n$ occur in cyclic order around the boundary of the
disk.  The action of $S^1$ by rotation preserves $\ovl{M}_n(C)^{\R,+}$
and the quotient is the cyclohedron $\Cycl_n$, that is, the moduli
space of points on the circle compactified by allowing bubbling, see
Markl \cite{markl:free}.  In this sense it is slight abuse of
terminology to call $\ovl{M}_n(C)$ the complexification of $\Cycl_n$;
rather, $\ovl{M}_n(C)$ is the complexification of a circle bundle over
$\Cycl_n$.  
\end{remark} \vskip .1in

The boundary structure of the moduli space $\ovl{M}_n(C)$ is described
in the following.

\begin{proposition} The boundary of $\ovl{M}_n(C)$ is the union of the following subspaces
(which will be divisors once the algebraic structure on $\ovl{M}_n(C)$
  is introduced): For each subset $I \subset \{ 1, \ldots, n\}$ of
  order at least two a subspace $ \iota_I : D_I \to \ovl{M}_n(C) $
  where the markings for $i \in I$ have bubbled off onto an
  (unparametrized) sphere bubble.  The subspace $D_I$ admits a
  homeomorphism $ \varphi_I: D_I \to \ovl{M}_{0,|I|+1} \times \ovl{M}_{n
    - |I| + 1}(C) .$
\end{proposition}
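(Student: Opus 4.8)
The plan is to describe the boundary of $\ol{M}_n(C)$ by analyzing which combinatorial types of stable $C$-parametrized curves can appear on a boundary stratum, and then to identify the resulting strata with the claimed product spaces.

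The plan is to describe the boundary by stratifying $\ol{M}_n(C)$ according to the combinatorial type $\Gamma$ of the parametrized curve and then reading off the codimension-one strata. First I would observe that the locally closed stratum $M_{n,\Gamma}(C)$ of curves of type $\Gamma$ has codimension equal to the number of edges (nodes) $|\Edge(\Gamma)|$ of the rooted tree $\Gamma$, the open stratum $M_n(C) = \Conf_n(C)$ corresponding to the trivial tree with a single root vertex. Hence the boundary $\ol{M}_n(C) \setminus M_n(C)$ is the union of the strata with $|\Edge(\Gamma)| \ge 1$, and its codimension-one part consists of the types with exactly one edge: a root vertex (the principal component) carrying a single bubble. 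If the bubble carries the markings indexed by $I \subseteq \{1,\ldots,n\}$, then together with the attaching node it has $|I|+1$ special points, so stability of the non-root vertex forces $|I|+1 \ge 3$, i.e. $|I| \ge 2$. This is exactly the indexing claimed, and I define $\iota_I : D_I \to \ol{M}_n(C)$ to be the inclusion of the closure $D_I := \ol{M}_{n,\Gamma_I}(C)$ of this stratum, where $\Gamma_I$ is the rooted tree with one bubble labelled by $I$.

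Next I would check that the subspaces $D_I$ exhaust the boundary. Given any boundary point with type $\Gamma$, choose an edge $e$ of $\Gamma$ incident to the root vertex; cutting $\Gamma$ at $e$ separates off a subtree $T$ all of whose vertices are non-principal. Since every such vertex carries at least three special points, the markings lying on $T$ form a subset $I = I(e)$ of cardinality at least two, and the curve is visibly obtained by gluing a genus-zero bubble tree carrying the markings $I$ to a $C$-parametrized curve carrying the markings $I^c$ along the single node $e$. Thus the point lies in $D_I$, so $\bigcup_{|I| \ge 2} D_I$ is all of the boundary. One also checks that the node separating $I$ from $I^c$ on the non-root side is unique: a second such node would bound a chain of markingless components, which violates stability; this makes the splitting map below well defined, though it is not strictly needed for the statement.

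Finally I would produce the homeomorphism $\varphi_I$ by invoking the functoriality established earlier for morphisms of rooted forests. Let $\Gamma_I'$ be obtained from $\Gamma_I$ by cutting its unique edge; then $\Gamma_I' = \Gamma_0 \sqcup \Gamma_1$ is disconnected, with $\Gamma_0$ the root vertex carrying the markings $I^c$ together with one new semi-infinite edge, and $\Gamma_1$ the bubble vertex carrying $I$ together with one new semi-infinite edge. The cutting-edge part of the preceding proposition gives an isomorphism $\ol{M}_{n,\Gamma_I'}(C) \to \ol{M}_{n,\Gamma_I}(C) = D_I$ identifying the two new markings, while the product decomposition for disconnected forests recorded just after it gives $\ol{M}_{n,\Gamma_I'}(C) \cong \ol{M}_{n-|I|+1}(C) \times \ol{M}_{0,|I|+1}$. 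Composing yields the desired homeomorphism $\varphi_I : D_I \to \ol{M}_{0,|I|+1} \times \ol{M}_{n-|I|+1}(C)$.

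The main obstacle is the topological content hidden in these two citations, namely that the splitting map — sending a curve in $D_I$ to the pair consisting of its bubble side and its principal side — is continuous with continuous inverse for the convergence topology defined above. The gluing direction is straightforward, but continuity of splitting requires unwinding the three convergence conditions (limits of markings, of reparametrizations, and of the map $u$) along each factor and checking that they are equivalent to convergence of the glued curves; this is precisely what the earlier Fulton-MacPherson-type functoriality provides, so in the write-up I would reduce to it rather than reprove it. The genus of $C$ being arbitrary changes nothing, since the bubble side is always rational and the parametrized side is handled exactly as in $\ol{M}_{n-|I|+1}(C)$.
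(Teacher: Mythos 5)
Your argument is correct and follows essentially the same route the paper intends: the paper states this proposition without a separate proof, treating it as a consequence of the preceding proposition on morphisms of rooted forests (in particular the cutting-an-edge isomorphism) together with the product decomposition $\ol{M}_{n,\Gamma}(C) \cong \ol{M}_{n_0,\Gamma_0}(C) \times \ol{M}_{0,n_1,\Gamma_1}$ for disconnected forests, which is exactly what you invoke. Your added details --- the codimension count by number of edges, the stability count forcing $|I| \ge 2$, the uniqueness of the separating node, and the exhaustion of the boundary by the closures $D_I$ --- correctly supply the steps the paper leaves as standard.
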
 

For any $\beta \in \ovl{M}_n(C)$, the pull-back $\iota_I^* \beta$ to a
subspace $D_I$ has a K\"unneth decomposition
\begin{equation} \label{kunneth2}
 \iota_I^* \beta = \sum_{j \in J} \beta_{1,j} \otimes
 \beta_{2,j}\end{equation}
for some index set $J$ and classes $\beta_{1,j} \in
H(\ovl{M}_{0,|I|+1})$ and $\beta_{2,j} \in \ovl{M}_{n-|I| + 1}(C)$.
In general, the moduli space of stable maps is not smooth.  However,
the space $\ovl{M}_n(C)$, as a special case of Fulton-MacPherson
\cite{fm:compact}, is a compact smooth manifold.  In particular, any
subset $D_I$ has a homology class $[D_I] \in H_2(\ovl{M}_n(C),\Z)$ and
a dual class $\gamma_I \in H^2(\ovl{M}_n(C),\Z)$, although we work with
rational coefficients below.  Let $\Lambda$ be a vector space.

\begin{definition}  {\rm (Trace on a CohFT algebra)} 
\label{cohfttrace} 
A ($C$-based, $\Lambda$-valued) {\em trace} on a CohFT algebra $V$ is
a collection of $S_n$-invariant (with Koszul signs) multilinear maps
$$ \tau^n: V^n \times H(\ovl{M}_n(C)) \to
\Lambda, \quad n \ge 0 $$
%
%CW 
satisfying a splitting axiom for any $I \subset \{ 1,\ldots, n \}$
$$ \tau^n(\alpha; \beta \cup \gamma_I)
=  \tau^{n - |I| + 1}(\alpha_i, i \notin I, \mu^{|I|}(\alpha_i, i \in I; \cdot);
\cdot )(\iota_I^* \beta) $$
where $\gamma_I$ is the dual class to $D_I$ and the $\cdot$'s denote
insertion of the K\"unneth components of $\beta$.
\noindent That is, with $\beta$ as in \eqref{kunneth2},
$$ \tau^n(\alpha; \beta \cup \gamma_I ) = \sum_{j \in J}
 \tau^{n-|I| + 1}(\alpha_i, i \notin I, \mu^{|I|}(\alpha_i, i \in I; \beta_{1,j});
\beta_{2,j} ) .$$
\end{definition} \vskip .1in  
\begin{remark} 
\begin{enumerate}
\item 
In our main application, gauged Gromov-Witten invariants will define a
$\Lambda_X^G$-valued trace on $QH_G(X)$, which is a CohFT defined over
the field of fractions of $H(BG) \otimes \Lambda_X^G$.  In other
words, the space $\Lambda$ above need not be the ring or field over
which the CohFT algebra or CohFT is defined.
\item One should compare the notion of trace with that for \ainfty
  algebras described in \cite[Proposition 2.14]{markl:sac}.  The
  corresponding notion for an \ainfty space $X$ consists of a sequence
  of maps
$$ X^n \times \Cycl_n \to Y,\quad n \ge 0 $$
to an ordinary space $Y$, satisfying a suitable splitting axiom.
\end{enumerate} 
\end{remark} \vskip .1in

Any trace $(\tau^n)_{n \ge 0}$ on a CohFT algebra $V$ defines a formal
map
\begin{equation} \label{potential}
 \tau: V \to \Lambda, \quad v \mapsto \sum_{n} \frac{1}{n!}
 \tau^n(v,\ldots, v;1) \end{equation}
often called a {\em potential}.  The splitting axiom implies that the
second derivatives of $\tau$ with two point classes inserted define a
$\Lambda$-valued family of bilinear forms on $T_v V$ compatible with
the multiplications $\star_v$ on $T_v V$: 

%\begin{definition} {\rm (Family of bilinear forms associated to a CohFT trace)}
%Let $\gamma_j \in \ovl{M}_n(C)$ denote the class given by the pull-back
%of a point under the $j$-th evaluation map $\ev_j: \ovl{M}_n(C) \to C$.
%For $v,v_1,v_2 \in V$ define
%%
%$$ g_v: T_vV^2 \to \Lambda, \quad g_v(v_1,v_2) := \sum_{n \ge 0}
%\frac{1}{n!}  \tau^{n+2}(v_1,v_2,v,\ldots, v; \gamma_1 \cup \gamma_2)
%.$$
%%
%the {\em bilinear form} on $T_v V$ associated to the trace.
%\end{definition} \vskip .1in 

\subsection{Complexified multiplihedron and morphisms of CohFT algebras}
\label{ziltener}

In this section we review a construction of Ma'u-Woodward
\cite{mau:mult}, based on earlier work of Ziltener \cite{zilt:phd},
which introduces a compactification of the moduli space of distinct
points on the affine line up to translation.  This compactification
``complexifies'' the multiplihedron in the same way that the
Grothendieck-Knudsen space and the Fulton-MacPherson spaces complexify
the associahedron and cyclohedron respectively.  We then discuss the
associated notion of {\em morphism} of CohFT algebras.  Let $\bA$
denote an affine line over $\C$, unique up to isomorphism.  We denote
by
$$\Omega^1(\bA,\C)^\C  = \{ w \d z | w \in \C  \} \cong \C$$ 
the space of $\C$-invariant one-forms on $\bA$.  

\begin{definition} {\rm (Scaled affine line)} \label{scale} A {\em
scaling} of an affine line $\bA$ is a translation-invariant, non-zero
  one form $\lambda \in \Omega^1(\bA,\C)^\C$.  A {\em scaled affine
    line} is an affine line equipped with a scaling.  An {\em
    $n$-marking} of an affine line is an $n$-tuple $\ul{z}=
  (z_1,\ldots, z_n)$ of distinct points in $\bA^n$.  An {\em
    isomorphism} of scaled $n$-marked affine lines is an affine
  isomorphism $\psi: C_0 \to C_1$, such that $\psi^* \lambda_1 =
  \lambda_0$ and $\psi(z_{0,i}) = z_{1,i}, i =1,\ldots, n$.
\end{definition} \vskip .1in 

\begin{figure}[ht]
\begin{center} 
\begin{picture}(0,0)%
\includegraphics{zilt.pstex}%
\end{picture}%
\setlength{\unitlength}{4144sp}%
\begingroup\makeatletter\ifx\SetFigFontNFSS\undefined%
\gdef\SetFigFontNFSS#1#2#3#4#5{%
  \reset@font\fontsize{#1}{#2pt}%
  \fontfamily{#3}\fontseries{#4}\fontshape{#5}%
  \selectfont}%
\fi\endgroup%
\begin{picture}(3441,1893)(2013,-3625)
\put(3635,-3434){\makebox(0,0)[lb]{{{{$z_0$}%
}}}}
\put(4840,-2624){\makebox(0,0)[lb]{{{{$z_1$}%
}}}}
\put(4552,-1868){\makebox(0,0)[lb]{{{{$z_2$}%
}}}}
\put(2518,-2571){\makebox(0,0)[lb]{{{{$z_3$}%
}}}}
\put(2699,-1948){\makebox(0,0)[lb]{{{{$z_4$}%
}}}}
\put(2946,-2243){\makebox(0,0)[lb]{{{{$z_5$}%
}}}}
\end{picture}%
\end{center}
\caption{An affine scaled curve}
\label{zilt}
\end{figure} 

Let $M_{n,1}(\bA)$ denote the moduli space of isomorphism classes of
scaled $n$-marked affine lines.  If $\bA$ is a scaled affine line then
the group of automorphisms of $\bA$ preserving the scaling is the
additive group $\C$ acting on $\bA$ by translation.  Thus

\begin{proposition}   The moduli space
$M_{n,1}(\bA)$ may be identified with the configuration space
  $\Conf_n(\bA)$ of $n$-tuples of distinct points on $\bA$ up to the
  action of $\C$ by translation, $ M_{n,1}(\bA) \cong \Conf_n(\bA)/\C
  .$
\end{proposition}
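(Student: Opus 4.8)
The plan is to rigidify the scaling and reduce the moduli problem to an orbit space for the translation action. First I would fix once and for all a reference scaled affine line $(\bA,\lambda)$, and observe that every scaled affine line is isomorphic to this one: given another scaled affine line $(\bA',\lambda')$, any affine isomorphism $\bA' \to \bA$ pulls $\lambda$ back to a nonzero translation-invariant one-form, which differs from $\lambda'$ by a nonzero scalar, and post-composing with the corresponding dilation of $\bA$ produces an isomorphism matching the scalings exactly. Here the only thing being used is that a scaling is nonzero, together with the computation $\psi^*(\d z) = a\,\d z$ for an affine map $\psi: z \mapsto az+b$. Consequently every isomorphism class in $M_{n,1}(\bA)$ has a representative of the form $(\bA,\lambda,\ul z)$ with the scaled line held fixed and only the marking $\ul z \in \Conf_n(\bA)$ varying, which gives a surjection $\Conf_n(\bA) \to M_{n,1}(\bA)$.

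Next I would determine the fibers of this surjection. By definition an isomorphism between $(\bA,\lambda,\ul z)$ and $(\bA,\lambda,\ul z')$ is an affine automorphism of $\bA$ preserving $\lambda$ and carrying $\ul z$ to $\ul z'$. As noted just before the statement, an affine automorphism $z \mapsto az+b$ satisfies $\psi^*\lambda = a\lambda$, so preserving $\lambda$ forces $a=1$; thus the automorphism group of $(\bA,\lambda)$ is exactly the group $\C$ of translations. Hence $(\bA,\lambda,\ul z)$ and $(\bA,\lambda,\ul z')$ are isomorphic precisely when $\ul z' = \ul z + b$ for some $b \in \C$, that is, when $\ul z$ and $\ul z'$ lie in the same $\C$-orbit. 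The surjection therefore descends to a bijection $\Conf_n(\bA)/\C \to M_{n,1}(\bA)$.

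Finally I would check that this bijection is compatible with the natural topologies, so that it is a homeomorphism rather than merely a bijection of sets; this is immediate once the topology on $M_{n,1}(\bA)$ is described through convergence of representatives, since convergence of marked configurations up to translation matches convergence of the corresponding scaled marked lines. The one substantive point — and the only place any content beyond bookkeeping enters — is the first step, namely confirming that the scaling contributes no moduli beyond the translation ambiguity; this is exactly where the nonvanishing of $\lambda$ and the transformation law $\psi^*(\d z)=a\,\d z$ are used. Everything else is the standard ``objects modulo isomorphisms equals configurations modulo residual automorphisms'' argument.
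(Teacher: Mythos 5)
Your proposal is correct and matches the paper's argument: the paper likewise normalizes the scaling to the standard form $\lambda = \d z$ by a unique rescaling (relying on the observation, stated just before the proposition, that automorphisms preserving the scaling are exactly the translations $\C$), and then identifies isomorphism classes with configurations modulo translation. Your write-up merely makes explicit the fiber computation and the topological compatibility that the paper leaves implicit.
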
 

\begin{proof}   For any tuple $(z_1,\ldots,z_n,\lambda)$ 
we take the unique rescaling so that $\lambda = \d z$ and then take
the associated configuration.  Conversely, any configuration defines
an affine scaled map by taking the scaling $\lambda = \d z$ to be
standard.
\end{proof} 

\begin{remark}  \label{twoform} {\rm (Two-forms instead of one-forms)} 
The moduli space $M_{n,1}(\bA)$ can be viewed in a different way: Any
scaling $\lambda$ gives rise to a real area form $\omega_{\bA} :=
\lambda \wedge \ovl{\lambda}$ on $\bA$.  Replacing $\lambda$ with
$\omega_{\bA}$ amounts to forgetting a complex phase; thus, one can
view ${M}_{n,1}(\bA)$ as the moduli space of data
$(z_1,\ldots,z_n,\omega,\phi)$ where $z_1,\ldots,z_n \in \bA$ are
distinct points, $\omega_{\bA} \in \Omega^2(\bA,\R)$ is a
translationally-invariant area form, and $\phi \in U(1)$ is a phase.
Any automorphism $\psi$ of $\bA$ has a well-defined {\em argument}
$\arg(\psi) \in U(1)$ giving the angle of rotation, and $\psi$ acts on
$(\omega_{\bA},\phi)$ by $(\psi^* \omega_{\bA}, \arg(\psi) \phi)$.
This is the point of view taken in Ziltener's thesis \cite{zilt:phd}.
\end{remark} \vskip .1in 

The moduli space $M_{n,1}(\bA)$ has a natural compactification
obtained by allowing bubbles with degenerate scalings.

\begin{definition} {\rm (Stable nodal scaled affine lines)}  
\label{affinescaled}
A {\em possibly degenerate scaling} on an affine line $\bA$ is an
element of the set
$$\ovl{\Omega}^1(\bA,\C)^\C = \Omega^1(\bA,\C)^\C \cup
\{ \infty \} \cong \P .$$
A possibly degenerate scaling $\lambda$ is {\em degenerate} if
$\lambda = 0 $ or $\lambda = \infty$ and is {\em non-degenerate}
otherwise.  The action of the group of automorphisms $\Aut(\bA)$ on
$\Omega^1(\bA,\C)^\C$ by pull-back extends naturally to an action on
$\ovl{\Omega}^1(\bA,\C)$, with fixed points $\{ 0 \}, \{ \infty \}$.  

Let $C$ be a nodal curve and $\omega_C$ the dualizing sheaf, whose
sections consist of one-forms possibly with poles at the nodes of $C$
whose residues on either side of a node are equal.  Let $\P(\omega_C
\oplus C)$ denote the projectivization of $\omega_C$.  Let $\lambda: C
\to \P(\omega_C \oplus \C)$ be a section.  A component $C_i$ of $C$
will be called {\em colored} if the the restriction of $\lambda$ to
$C_i$ is finite and non-zero.  A {\em nodal marked scaled affine line}
is a datum $(C,z_0,\ldots,z_n,\lambda)$ such that the following holds:
\begin{enumerate} 
\item[] (Monotonicity) on any non-self-crossing path from a marking
  $z_i$ to the root marking $z_0$, there is exactly one {\em colored
  irreducible component} with finite scaling; and the irreducible components before
  (resp. after) this irreducible component have infinite (resp. zero scaling).
\end{enumerate}
See Figure \ref{zilt}, where irreducible components with infinite
resp. finite, non-zero resp. zero scaling are shown with dark
resp. grey resp. light grey shading.  An {\em isomorphism} of nodal
marked scaled affine lines $(C_j,\ul{z}_j,\lambda_j),j = 0,1$ is an
isomorphism of nodal curves $\phi: C_0 \to C_1$ intertwining the
(possibly degenerate) scalings and markings in the sense that $\phi^*
\lambda_1 = \lambda_0$ and $\phi(z_{0,i}) = z_{1,i}$.  A nodal marked
scaled affine line is {\em stable} if it has no automorphisms, or
equivalently, if each irreducible component with finite scaling has at
least two special points, and each irreducible component with
degenerate scaling has at least three special points.
\end{definition} \vskip .1in

%\begin{figure}
%\includegraphics[height=1in]{colored.eps}
%\caption{The moduli space $\ovl{M}_{2,1}(\bA)$}
%\label{colored}
%\end{figure}

The space $\ovl{M}_{n,1}(\bA)$ of isomorphism classes of connected
stable scaled $n$-marked lines has a natural topology, similar to the
topology on the moduli space of stable curves.  Given a stable affine
scaled curve $(C,z_1,\ldots,z_n,\lambda)$, for any marking $z_i$ and
irreducible component $C_j$ we denote by $z_i^j$ the node in $C_j$
connecting to the irreducible component of $C$ containing $z_i$, or
$z_i$ if $z_i$ is contained in $C_j$.  The following can be taken as a
definition or a proposition with a suitable notion of family of stable
scaled marked lines, see \cite[Example 4.2]{qk2}.

\begin{definition}  {\rm (Convergence of a sequence of nodal scaled affine
lines)} A sequence $[(C_\nu,\ul{z}_\nu,\lambda_\nu)]$
  with smooth domain $C_\nu$ {\em converges} to
  $[(C,\ul{z},\lambda)]$ if there exists, for each irreducible component
  $C_j$ of the limit $C$, a sequence of holomorphic isomorphisms
  $\phi_{j,\nu}: C_j \to C_\nu$ such that
\begin{enumerate}
\item  {\rm (Limit of the scaling)}  $\lim_{\nu \to \infty} \phi_{j,\nu}^* \lambda_\nu =
\lambda |C_j$;
\item {\rm (Limit of a marking)} $\lim_{\nu \to \infty}
  \phi_{j,\nu}^{-1}(z_{i,\nu}) = z_i^j$; and 
\item {\rm (Limit of a different parametrization)}  $\lim_{\nu \to \infty} \phi_{j,\nu} \phi_{i,\nu}^{-1}$ has limit
  the constant map with value the node of $C_j$ connecting to $C_i$.
\end{enumerate} 
Convergence for sequences with nodal domain is defined similarly.
\end{definition} \vskip .1in 

\begin{example}  {\rm (Two markings converging)}   If $C_\nu = \P = \bA \cup \{ \infty \}$ 
and two points $z_{1,\nu}, z_{2,\nu}$ come together in the sense that
$\lim_{\nu \to \infty} z_{1,\nu} - z_{2,\nu} \to 0$, then there exists
a sequence of holomorphic maps $\phi_{\nu}: \P \to C_\nu$ such that
$\phi_{\nu}^{-1}(z_{1,\nu}), \phi_{\nu}^{-1}( z_{2,\nu} )$ converge to
distinct points, and the scaling $\phi_{\nu}^*(\lambda_\nu)$
converges to zero.  The limiting configuration consists of a
irreducible component with two markings and one node with zero
scaling, and an irreducible component with finite scaling, one node,
and the root marking $z_0$.  See Figure \ref{twoconverge}.
\end{example}

\begin{figure}[ht]
\begin{center} 
\begin{picture}(0,0)%
\includegraphics{twoconverge.pstex}%
\end{picture}%
\setlength{\unitlength}{4144sp}%
\begingroup\makeatletter\ifx\SetFigFont\undefined%
\gdef\SetFigFont#1#2#3#4#5{%
  \reset@font\fontsize{#1}{#2pt}%
  \fontfamily{#3}\fontseries{#4}\fontshape{#5}%
  \selectfont}%
\fi\endgroup%
\begin{picture}(3569,1956)(3474,-4245)
\put(6522,-3677){\makebox(0,0)[lb]{{{{$z_0$}%
}}}}
\put(3955,-3660){\makebox(0,0)[lb]{{{{$z_0$}%
}}}}
\put(3647,-3137){\makebox(0,0)[lb]{{{{$z_1$}%
}}}}
\put(4172,-3148){\makebox(0,0)[lb]{{{{$z_2$}%
}}}}
\put(6207,-2687){\makebox(0,0)[lb]{{{{$z_1$}%
}}}}
\put(6553,-2699){\makebox(0,0)[lb]{{{{$z_2$}%
}}}}
\end{picture}%
\end{center} 
\caption{Two markings converging} 
\label{twoconverge}
\end{figure}

\begin{example} {\rm (Two markings diverging)} 
If $C_\nu = \P$ for all $\nu$ with constant scaling $\lambda_\nu$ and
two points $z_{1,\nu}, z_{2,\nu}$ go to infinity in $\bA \subset \P$
in different directions, then for $k \in \{1, 2 \}$ there exists (i) a
sequence of holomorphic maps $\phi_{k,\nu}: \C \to C_\nu$ such that
$\phi_{k,\nu}^{-1}(z_{k,\nu})$ and $\phi_{k,\nu}^* \lambda_\nu$
converge for $k \in \{ 1, 2\}$ and (ii) a sequence $\phi_{12,\nu}: \C
\to C_\nu$ such that $\phi_{12,\nu}^{-1} z_{k,\nu}$ for $k = 1,2$
converge to distinct points and $\phi_{12,\nu}^* \lambda_\nu$
converges to infinity.  The limiting configuration consists of two
components with a single marking and node and finite scaling, and a
component with two nodes, the root marking, and infinite scaling.  See
Figure \ref{twodiverge}.
\end{example} 

\begin{figure}[ht]
\begin{center} 
\begin{picture}(0,0)%
\includegraphics{twodiverge.pstex}%
\end{picture}%
\setlength{\unitlength}{4144sp}%
\begingroup\makeatletter\ifx\SetFigFont\undefined%
\gdef\SetFigFont#1#2#3#4#5{%
  \reset@font\fontsize{#1}{#2pt}%
  \fontfamily{#3}\fontseries{#4}\fontshape{#5}%
  \selectfont}%
\fi\endgroup%
\begin{picture}(4716,1697)(3474,-4292)
\put(3955,-3660){\makebox(0,0)[lb]{{{{$z_0$}%
}}}}
\put(3647,-3137){\makebox(0,0)[lb]{{{{$z_1$}%
}}}}
\put(4172,-3148){\makebox(0,0)[lb]{{{{$z_2$}%
}}}}
\put(7111,-3724){\makebox(0,0)[lb]{{{{$z_0$}%
}}}}
\put(6366,-2992){\makebox(0,0)[lb]{{{{$z_1$}%
}}}}
\put(7719,-2954){\makebox(0,0)[lb]{{{{$z_2$}%
}}}}
\end{picture}%
\end{center}
\caption{Two markings diverging} 
\label{twodiverge}
\end{figure}

We denote by $\ovl{M}_{n,1}(\bA)$ the space of isomorphism classes of
connected nodal scaled lines, equipped with the topology above.  By
Ma'u-Woodward \cite{mau:mult} $\ovl{M}_{n,1}(\bA)$ is a compact
Hausdorff space.

\begin{definition}  {\rm (Combinatorial types of nodal scaled affine lines)} 
The {\em combinatorial type} of a connected scaled affine line is a
{\em colored tree} consisting of a tree $\Gamma =
(\Ve(\Gamma),\Edge(\Gamma))$ together with a partition of the vertices
$$\Ve(\Gamma) = \Ve^0(\Gamma) \cup \Ve^1(\Gamma) \cup
\Ve^\infty(\Gamma)$$
and a labelling of its semi-infinite edges given by a bijection
$\Edge_\infty(\Gamma) \to \{ 1, \ldots, n \}$ that satisfies the
combinatorial version of the monotonicity condition:
\begin{enumerate} 
\item[] (Monotonicity) on any non-self-crossing path from a
  semi-infinite edge labelled $j$ to the semi-infinite edge labelled
  $0$, there is exactly one vertex in $\Ve^1(\Gamma)$, all vertices
  before resp. after are in $\Ve^0(\Gamma)$
  resp. $\Ve^\infty(\Gamma)$.
\end{enumerate}
A colored tree is {\em stable} if it corresponds to a stable affine
scaled line, that is, each vertex $v \in \Ve^0(\Gamma)$
resp. $\Ve^\infty(\Gamma)$ resp. $\Ve^1(\Gamma)$ has valence at least
$3$ resp. $3$ resp. $2$.
\end{definition} \vskip .1in 

We call the vertices in $\Ve^1(\Gamma)$ the {\em colored vertices}.
Colored trees can be pictured as trees where part of the tree
containing the root semi-infinite edge edge is ``below water'' and
part ``above water''; the colored vertices in $\Ve^1(\Gamma)$ are
those ``at the water level''.  The monotonicity condition then says
that the path from any ``above water'' semiinfinite edge to the unique
``below water'' semiinfinite edge passes through the water surface
exactly once.  However, in our trees we adopt the standard convention
of having the root edge (which corresponds to an outgoing marking) at
the top of the picture.

More generally we allow disconnected curves where each connected
component is either a nodal affine curve, or a nodal curve with
infinite or zero scaling.

\begin{definition}  {\rm (Colored forests)}
A colored forests $\Gamma$ is a union of components that are either
colored trees, or ordinary trees with all vertices in $\Ve^0(\Gamma)$
or all vertices in $\Ve^\infty(\Gamma)$.
\end{definition} \vskip .1in 

%\begin{remark}  The semi-infinite edges of a colored forest admit a partition 
%$ \Edge(\Gamma) = \Edge^0(\Gamma) \cup E^\infty(\Gamma) $ where
%  $\Edge^0(\Gamma)$ is the union of semi-infinite edges labelled
%  $1,\ldots, n$, and finite edges connecting $\Ve^0(\Gamma)$ with
%  $\Ve^0(\Gamma) \cup \Ve^1(\Gamma)$, and $\Edge^\infty(\Gamma)$ is the
%  union of the semi-infinite edge with the set of edges connecting
%  $\Ve^1(\Gamma) \cup \Ve^\infty(\Gamma)$ with $\Ve^\infty(\Gamma)$.
%\label{edgepartition}.
%\end{remark} \vskip .1in 

For any colored forest $\Gamma$ with $n$ semiinfinite edges we denote
by ${M}_{n,1,\Gamma}(\bA)$ the space of isomorphism classes of scaled
lines of combinatorial type $\Gamma$, and $\ovl{M}_{n,1,\Gamma}(\bA)$
its closure.

\begin{definition} {\rm (Morphisms of colored forests)}  
A {\em morphism of colored forests} from $\Gamma$ to $\Gamma'$ is a
combination of the following simple morphisms:
\begin{enumerate} 
\item {\rm (Collapsing edges without relations)} $\Upsilon: \Gamma \to
  \Gamma'$ {\em collapses an edge} if $\Upsilon$ is injective except
  that it maps two vertices in $\Ve^0(\Gamma) \cup \Ve^1(\Gamma)$ to
  the same vertex in $\Ve(\Gamma')$ or two vertices in
  $\Ve^\infty(\Gamma)$ with the same vertex in $\Ve^\infty(\Gamma)$.
  (In other words, any edge except those connecting $\Ve^1(\Gamma)$
  with $\Ve^\infty(\Gamma))$.
\begin{figure}[ht]
\begin{center} 
\begin{picture}(0,0)%
\includegraphics{sccollapse.pstex}%
\end{picture}%
\setlength{\unitlength}{3947sp}%
\begingroup\makeatletter\ifx\SetFigFontNFSS\undefined%
\gdef\SetFigFontNFSS#1#2#3#4#5{%
  \reset@font\fontsize{#1}{#2pt}%
  \fontfamily{#3}\fontseries{#4}\fontshape{#5}%
  \selectfont}%
\fi\endgroup%
\begin{picture}(5921,1117)(2389,-866)
%\put(4111,-230){\makebox(0,0)[lb]{{{{$x_2$}%
%}}}}
%\put(2645,-604){\makebox(0,0)[lb]{{{{$x_3$}%
%}}}}
%\put(3097,-596){\makebox(0,0)[lb]{{{{$x_4$}%
%}}}}
%\put(4196,-620){\makebox(0,0)[lb]{{{{$x_5$}%
%}}}}
%\put(4722,-599){\makebox(0,0)[lb]{{{{$x_6$}%
%}}}}
%\put(3134,-230){\makebox(0,0)[lb]{{{{$x_1$}%
%}}}}
%\put(7321,-230){\makebox(0,0)[lb]{{{{$x_2$}%
%}}}}
%\put(5855,-604){\makebox(0,0)[lb]{{{{$x_3$}%
%}}}}
%\put(7406,-620){\makebox(0,0)[lb]{{{{$x_5$}%
%}}}}
%\put(7932,-599){\makebox(0,0)[lb]{{{{$x_6$}%
%}}}}
%\put(6541,-612){\makebox(0,0)[lb]{{{{$x_4$}%
%}}}}
\end{picture}%
\end{center} 
\caption{Collapsing an edge connecting two vertices of the same type}
\label{coloredfig3}
\end{figure}

\item {\rm (Collapsing edges with relations)} $\Upsilon: \Gamma \to
  \Gamma'$ {\em collapses edges} if $\Upsilon$ is injective except for
  one colored vertex of $\Gamma'$ whose inverse image in $\Ve(\Gamma)$
  is a collection of colored vertices in $\Gamma$ and a single vertex
  in $\Ve^\infty(\Gamma)$, joined to each of the colored vertices by a
  single edge.  Note that one cannot write such a morphism as a
  composition of morphisms each collapsing a single edge, since there
  is no way to assign the coloring of vertices of the resulting graph
  which results in a colored forest.
\begin{figure}[ht]
\begin{center} 
\begin{picture}(0,0)%
\includegraphics{sccollapse2.pstex}%
\end{picture}%
\setlength{\unitlength}{3947sp}%
\begingroup\makeatletter\ifx\SetFigFontNFSS\undefined%
\gdef\SetFigFontNFSS#1#2#3#4#5{%
  \reset@font\fontsize{#1}{#2pt}%
  \fontfamily{#3}\fontseries{#4}\fontshape{#5}%
  \selectfont}%
\fi\endgroup%
\begin{picture}(5786,1092)(2389,-841)
%\put(4072,-219){\makebox(0,0)[lb]{{{{$x_2$}%
%}}}}
%\put(2640,-585){\makebox(0,0)[lb]{{{{$x_3$}%
%}}}}
%\put(3081,-577){\makebox(0,0)[lb]{{{{$x_4$}%
%}}}}
%\put(4155,-600){\makebox(0,0)[lb]{{{{$x_5$}%
%}}}}
%\put(4669,-580){\makebox(0,0)[lb]{{{{$x_6$}%
%}}}}
%\put(3117,-219){\makebox(0,0)[lb]{{{{$x_1$}%
%}}}}
%\put(7208,-219){\makebox(0,0)[lb]{{{{$x_2$}%
%}}}}
%\put(7292,-600){\makebox(0,0)[lb]{{{{$x_5$}%
%}}}}
%\put(7805,-580){\makebox(0,0)[lb]{{{{$x_6$}%
%}}}}
%\put(6254,-219){\makebox(0,0)[lb]{{{{$x_1$}%
%}}}}
\end{picture}%
\end{center} 
\caption{Collapsing edges with relations}
\label{coloredfig4}
\end{figure}

\item {\rm (Cutting an edge without relations)} $\Upsilon: \Gamma \to
  \Gamma'$ {\em cuts an edge} of $\Gamma$ if the vertices are the
  same, but $\Gamma'$ has one fewer edge than $\Gamma$ and the edge
  does not lie between the colored vertices and the root edge.

\begin{figure}[ht]
\includegraphics[width=5in]{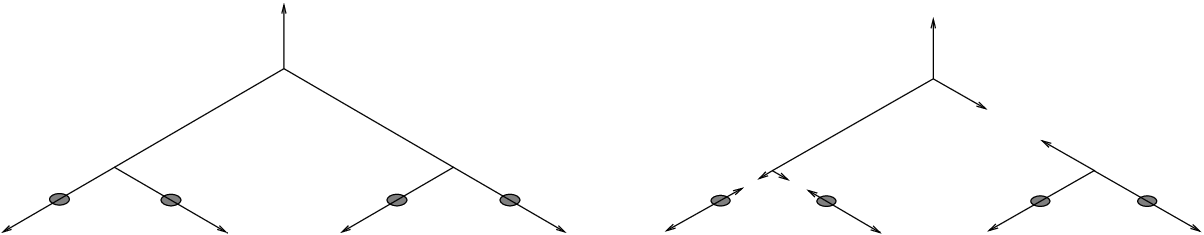}
\caption{Cutting edges with relations}
\label{cutwrelns}
\end{figure}

\item {\rm (Cutting edges with relations)} $\Upsilon: \Gamma \to
  \Gamma'$ {\em cuts edges with relations} of $\Gamma$ if the vertices
  are the same, but $\Gamma'$ has fewer edges than $\Gamma$, with each
  removed edge lying on a path between the root edge and the colored
  vertices, and each path passing through a unique such edge.  See
  Figure \ref{cutwrelns}.

\item {\rm (Forgetting tails)} $\Upsilon: \Gamma \to \Gamma'$ {\em
  forgets a tail} (semiinfinite edge) and any vertices that 
become unstable, recursively starting from the semiinfinite edges
furthest away from the root edge.    

\begin{figure}[ht]
\begin{center} 
\begin{picture}(0,0)%
\includegraphics{scforget.pstex}%
\end{picture}%
\setlength{\unitlength}{3947sp}%
\begingroup\makeatletter\ifx\SetFigFontNFSS\undefined%
\gdef\SetFigFontNFSS#1#2#3#4#5{%
  \reset@font\fontsize{#1}{#2pt}%
  \fontfamily{#3}\fontseries{#4}\fontshape{#5}%
  \selectfont}%
\fi\endgroup%
\begin{picture}(4916,1092)(2389,-803)
%\put(4038,-193){\makebox(0,0)[lb]{{{{$x_2$}%
%}}}}
%\put(2635,-552){\makebox(0,0)[lb]{{{{$x_3$}%
%}}}}
%\put(3067,-544){\makebox(0,0)[lb]{{{{$x_4$}%
%}}}}
%\put(4120,-567){\makebox(0,0)[lb]{{{{$x_5$}%
%}}}}
%\put(4623,-547){\makebox(0,0)[lb]{{{{$x_6$}%
%}}}}
%\put(3103,-193){\makebox(0,0)[lb]{{{{$x_1$}%
%}}}}
%\put(6357,-172){\makebox(0,0)[lb]{{{{$x_2$}%
%}}}}
%\put(5387,-523){\makebox(0,0)[lb]{{{{$x_4$}%
%}}}}
%\put(6439,-545){\makebox(0,0)[lb]{{{{$x_5$}%
%}}}}
%\put(6942,-525){\makebox(0,0)[lb]{{{{$x_6$}%
%}}}}
\end{picture}%
\end{center} 
\caption{Forgetting a tail and collapsing}
\label{coloredfig5}
\end{figure}

\end{enumerate} 
\end{definition} \vskip .1in 

\begin{remark} {\rm (More explanation on forgetting tails)} 
Forgetting a tail leaves possibly only the vertex adjacent to the tail
unstable, if it is colored with a single other edge adjacent, or
non-colored with two other adjacent edges.  In the first case,
removing that vertex and the other adjacent edge still leaves a
non-colored vertex which may be unstable, since it has one fewer edge.
If unstable, removing this vertex and identifying the other two edges
gives a stable colored tree.  In the second case, removing the vertex
gives a stable colored tree.  
%See Figure \ref{forgetstages}.
\end{remark} 

%
%\begin{figure}[ht]
%\includegraphics[width=5in]{forgetstages.eps}
%\caption{Collapsing unstable vertices (colored vertices are lightly shaded)} 
%\label{forgetstages}
%\end{figure}
%
%

\begin{proposition}  {\rm (Morphisms of moduli spaces induced by morphisms
of colored forests)} 
To any morphism $\Upsilon$ of colored forests $\Gamma \to \Gamma'$ one
can associate a morphism $\ovl{M}_n(\Upsilon): \ovl{M}_{n,1,\Gamma}(\bA)
\to \ovl{M}_{n,1,\Gamma'}(\bA) $ as follows.
\begin{enumerate} 
\item {\rm (Collapsing edges without relations)} Any morphism
  $\Upsilon: \Gamma \to \Gamma'$ collapsing an edge induces an
  inclusion $\ovl{M}(\Upsilon): \ovl{M}_{n,1,\Gamma}(\bA) \to
  \ovl{M}_{n,1,\Gamma'}(\bA)$.
\item {\rm (Collapsing edges with relations)} Any morphism $\Upsilon:
  \Gamma \to \Gamma'$ collapsing edges with relations induces an
  inclusion $\ovl{M}(\Upsilon): \ovl{M}_{n,1,\Gamma}(\bA) \to
  \ovl{M}_{n,1,\Gamma'}(\bA)$.
\item {\rm (Cutting an edge or edges with relations)} Any morphism
  $\Upsilon: \Gamma \to \Gamma'$ cutting an edge or edges with
  relations of $\Gamma$ induces a homeomorphism from
  $\ovl{M}_{n,1,\Gamma}(\bA)$ to $\ovl{M}_{n,1,\Gamma'}(\bA)$ by
  identifying the markings corresponding to the additional
  semiinfinite edges.
\item {\rm (Forgetting tails)} Any morphism $\Upsilon: \Gamma \to
  \Gamma'$ {\em forgetting a tail} induces a map $\ovl{M}(\Upsilon):
  \ovl{M}_{n,1,\Gamma}(\bA) \to \ovl{M}_{n,1,\Gamma'}(\bA)$ which
  forgets the corresponding marking and collapses any unstable
  components recursively starting with the semiinfinite edges
  corresponding to the finite markings.
\end{enumerate} 
\end{proposition}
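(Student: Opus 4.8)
The plan is to reduce the construction to the five elementary morphisms listed in the definition and to build the induced map on moduli spaces one elementary morphism at a time; since any morphism of colored forests is by definition a composition of these, the general map $\ol{M}_n(\Upsilon)$ is then obtained by composing. Because a disconnected colored forest is a disjoint union of colored trees and of ordinary all-zero or all-infinite trees, and the associated moduli space factors as a product $\ol{M}_{n,1,\Gamma}(\bA) \cong \prod_i \ol{M}_{n_i,1,\Gamma_i}(\bA)$ of connected factors (the all-zero and all-infinite factors being ordinary Grothendieck--Knudsen spaces $\ol{M}_{0,n_i}$), it suffices to treat each elementary morphism on the connected factor it touches and take the identity on the others. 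Throughout, the topology is the one defined by convergence of sequences $[(C_\nu,\ul{z}_\nu,\lambda_\nu)]$ above, so ``continuity'' always means that a convergent sequence of type-$\Gamma$ curves is sent to a convergent sequence of type-$\Gamma'$ curves with the expected limit.

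For the two collapsing morphisms I would argue that the induced map is a closed embedding of the deeper stratum closure into the shallower one. A curve of type $\Gamma$ already lies in the closure $\ol{M}_{n,1,\Gamma'}(\bA)$: collapsing an edge without relations exhibits the node recorded by that edge as a limit of smoothings, so $M_{n,1,\Gamma}(\bA)$ is a boundary stratum of $M_{n,1,\Gamma'}(\bA)$, and passing to closures gives the inclusion $\ol{M}(\Upsilon)$. The only new point in the ``with relations'' case is that the several edges joining colored vertices to a common infinite-scaling vertex cannot be smoothed one at a time without violating monotonicity; nevertheless the whole configuration forms a single boundary stratum of the target, and one checks directly from the convergence definition (the scalings on the colored components tend to a common finite value while the infinite component's scaling blows up) that the resulting map is a continuous injection, indeed a homeomorphism onto its image. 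This is precisely the content established for the connected case in Ma'u--Woodward \cite{mau:mult}, which I would cite.

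The cutting morphisms are essentially tautological: cutting an edge replaces a node by a pair of new markings $e_\pm$ without altering the underlying scaled curve, so there is an evident bijection $\ol{M}_{n,1,\Gamma}(\bA) \to \ol{M}_{n,1,\Gamma'}(\bA)$ identifying the two marked points coming from $e_\pm$, and it is continuous with continuous inverse because convergence is tested componentwise. For cutting edges with relations the same argument applies simultaneously along every root-to-color path, each such path being cut at a unique edge, so the identification is again a homeomorphism.

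I expect the forgetting-tails morphism to be the main obstacle. Here one first deletes the semi-infinite edge and then recursively contracts the components that become unstable, starting from the markings furthest from the root, exactly as in the Remark following the definition. The delicacy is twofold: one must check that the recursive contraction always terminates in a \emph{stable} colored forest --- so that the monotonicity partition $\Ve^0(\Gamma) \cup \Ve^1(\Gamma) \cup \Ve^\infty(\Gamma)$ and the valence bounds are restored --- and that the scaling datum descends correctly when a colored or degenerate component is contracted; in particular, contracting a colored bivalent vertex must transport its finite scaling to the adjacent component so that each root-to-leaf path still meets exactly one colored vertex. Continuity of the resulting map is then the real work: given a sequence converging in $\ol{M}_{n,1,\Gamma}(\bA)$ one must produce the isomorphisms $\phi_{j,\nu}$ after stabilization and verify the three limit conditions (scaling, markings, reparametrization) for the stabilized curves. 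As in Knudsen's proof of the forgetful map for $\ol{M}_{0,n}$ and its parametrized analogue for $\ol{M}_n(C)$, the argument is local near the contracted components and relies on the fact that forgetting a marking commutes with taking limits once the unstable bubbles have been contracted; I would assemble it from the connected statement of \cite{mau:mult} together with the product decomposition for forests noted above.
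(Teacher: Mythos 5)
Your proposal is correct and takes essentially the same route as the paper: the paper likewise treats the collapsing and cutting cases as immediate from the definitions (resting on the stratum/closure structure from Ma'u--Woodward \cite{mau:mult}) and concentrates on forgetting tails, which it handles by exactly the recursive contraction of unstable components you describe, observing that the recursion terminates after at most two steps (a colored vertex left with a single node, then possibly one adjacent non-colored vertex). The only difference is emphasis: where you flag continuity of the stabilization as the remaining work, the paper does not argue it topologically but instead defers to a stronger algebraic version of the forgetful morphism in \cite[Example 4.2]{qk2}.
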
 

\begin{proof}   The existence of these maps is immediate from the definitions except
for the existence of the morphism for forgetting tails, which requires
an inductive argument collapsing the unstable components.  For this
note that forgetting a tail leaves possibly only the component
containing the corresponding marking unstable, if it had either
non-degenerate scaling and two special points, or degenerate scaling
and three special points.  Removing that component, and if the second
possibility holds, replacing the node with the remaining marking or
identifying the two remaining nodes, produces a new curve with one
fewer irreducible component.  In the second case, the resulting curve
is automatically stable.  In the first case, the adjacent irreducible
component has one fewer special point, and so now may be unstable.  If
so, removing that component, and either (i) identifying the nodes, if
the two special points were nodes, or (ii) changing the node to a
marking, if the two special points were a node and a marking, produces
a stable scaled affine curve.  We give a stronger, algebraic version
of the forgetful morphism in \cite[Example 4.2]{qk2}.
\end{proof}

\begin{lemma}  
For each $\Gamma$, the boundary of $\ovl{M}_{n,1,\Gamma}(\bA)$ consists
of those moduli spaces $M_{n,1,\Gamma'}(\bA)$ such that $\Gamma$ is
obtained from $\Gamma'$ by collapsing an edge or edges with relations.
Furthermore, each $\ovl{M}_{n,1,\Gamma}(\bA)$ is a product of the
moduli spaces $\ovl{M}_{n_j,1}(\bA)$ and $\ovl{M}_{0,n_j}$ corresponding
to the vertices of $\Gamma$, where $n_j$ are the valences.
\end{lemma}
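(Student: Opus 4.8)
The plan is to derive both assertions from the product decomposition of a fixed stratum, reducing the boundary description to the known boundary stratifications of the two factor spaces $\ol{M}_{0,m}$ and $\ol{M}_{m,1}(\bA)$. First I would establish the product structure. Fixing the combinatorial type $\Gamma$ fixes the nodal configuration, so a stable nodal scaled affine line of type $\Gamma$ is precisely a compatible collection of curves, one for each vertex $v$, glued along the nodes prescribed by the finite edges. Since the incidence of components is determined by $\Gamma$, there is no gluing parameter to vary, and a point of $M_{n,1,\Gamma}(\bA)$ amounts to an independent choice of data on each component: a colored vertex $v \in \Ve^1(\Gamma)$ carries finite nonzero scaling and hence corresponds to a scaled affine line with $n_v$ special points, i.e. a point of $M_{n_v,1}(\bA)$, while a vertex $v \in \Ve^0(\Gamma) \cup \Ve^\infty(\Gamma)$ carries degenerate scaling and no scaling freedom, so corresponds to a genus zero curve with $n_v$ special points, i.e. a point of $M_{0,n_v}$. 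Passing to closures, and noting that no node prescribed by $\Gamma$ is ever smoothed within $\ol{M}_{n,1,\Gamma}(\bA)$, the degenerations of distinct components are independent, yielding the asserted homeomorphism
$$\ol{M}_{n,1,\Gamma}(\bA) \cong \prod_{v \in \Ve^1(\Gamma)} \ol{M}_{n_v,1}(\bA) \times \prod_{v \in \Ve^0(\Gamma) \cup \Ve^\infty(\Gamma)} \ol{M}_{0,n_v}.$$
I would justify the factorization rigorously by exhibiting the local model for a neighborhood in $\ol{M}_{n,1}(\bA)$ from Ma'u--Woodward \cite{mau:mult} as a product of local models for the individual factors.

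Granting the product structure, the boundary of $\ol{M}_{n,1,\Gamma}(\bA)$ is the union over $v$ of the product of the boundary of the $v$-th factor with the remaining factors, so it suffices to identify the codimension one boundary of each factor combinatorially. For the factors $\ol{M}_{0,n_v}$ the boundary is the standard Grothendieck--Knudsen stratification: a boundary divisor bubbles off a marked genus zero component, which in $\Gamma$ corresponds to splitting $v$ into two vertices of the same type joined by a new edge; reversing this, $\Gamma$ is recovered from the refined type $\Gamma'$ by collapsing that edge, exactly a morphism collapsing an edge without relations. For the factors $\ol{M}_{n_v,1}(\bA)$ I would invoke the boundary stratification of the complexified multiplihedron from \cite{mau:mult}: its codimension one strata are of two kinds, either a degeneration splitting off a component of degenerate scaling (again a single collapse in reverse), or a degeneration in which a colored vertex resolves into several colored vertices bridged by a vertex of infinite scaling, which is precisely the operation of collapsing edges with relations in reverse.

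Assembling these, every codimension one boundary stratum of $\ol{M}_{n,1,\Gamma}(\bA)$ has the form $M_{n,1,\Gamma'}(\bA)$ with $\Gamma$ obtained from $\Gamma'$ by collapsing a single edge or by collapsing edges with relations, and conversely every such $\Gamma'$ arises in this way from exactly one factor. The hard part will be the second kind of multiplihedron boundary: unlike the associahedral case, the monotonicity constraint forbids resolving one edge at a time, so the corresponding boundary face of $\ol{M}_{m,1}(\bA)$ must be matched against the non-composable ``collapsing edges with relations'' morphism as a single whole. Verifying that these faces are exactly the codimension one strata, neither finer nor coarser, is where I expect to rely most heavily on the explicit local structure of $\ol{M}_{m,1}(\bA)$ established in Ma'u--Woodward, rather than on a self-contained gluing argument.
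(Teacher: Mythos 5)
Your proposal is correct, and it actually supplies more than the paper does: the paper states this lemma with no proof at all, treating it as a consequence of the Ma'u--Woodward results \cite{mau:mult} quoted immediately afterwards, namely the local model of Proposition \ref{MAloc2} (a neighborhood of the stratum is $Z_\Gamma \times M_{n,1,\Gamma}(\bA)$, with $Z_\Gamma$ the balanced gluing parameters) together with the codimension formula of Remark \ref{dimcount}. Your route --- first the factorization of the closed stratum over the vertices of $\Gamma$, then reduction to the known codimension-one faces of the factors, Grothendieck--Knudsen boundary for $\ol{M}_{0,n_v}$ and the divisors $D_I$ and $D_{[I_1,\ldots,I_r]}$ for $\ol{M}_{n_v,1}(\bA)$ --- is the natural way to fill this in and is consistent with the paper's apparatus. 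Two points you should make explicit. First, the factorization of the closure (that the closure of the product stratum is the product of the factor compactifications) is exactly where Proposition \ref{MAloc2} earns its keep: the notion of convergence couples the scalings of different components only through the balanced condition \eqref{balanced}, which is vacuous so long as the nodes prescribed by $\Gamma$ are not smoothed; you flag this correctly, and one also needs the easy converse that every independent degeneration of the factors is realized as a limit from the open stratum. Second, your worry that the ``collapsing edges with relations'' faces are genuinely codimension one is settled by Remark \ref{dimcount}: resolving a colored vertex into $r$ colored vertices bridged by a single vertex of infinite scaling adds $r$ finite edges and $r-1$ colored vertices, so $\codim = \# \Edge_{<\infty}(\Gamma) + 1 - \# \Ve^1(\Gamma)$ increases by exactly $1$, confirming both that this face is a divisor and that the $r$ edges can only be collapsed as a single non-composable morphism, exactly as you predicted. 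One small addendum: the lemma's boundary statement covers strata of all codimensions, not just one, so add a sentence observing that the higher strata are obtained by iterating your codimension-one analysis within the factors, i.e.\ by compositions of the two kinds of collapse.
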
 

Ma'u-Woodward \cite{mau:mult} shows that the compactification
$\ovl{M}_{n,1}(\bA)$ has the structure of a projective variety, locally
isomorphic to a toric variety.  The local structure of
$\ovl{M}_{n,1}(\bA)$ near the stratum $M_{n,1,\Gamma}(\bA)$ of nodal
lines with combinatorial type $\Gamma$ may be described as follows.

\begin{definition} {\rm (Balanced labellings)} 
For any colored tree $\Gamma$, a labelling 
$$ \gamma: \Edge_{<
  \infty}(\Gamma) \to \C $$ 
is {\em balanced} iff
\begin{equation} \label{balanced}
\prod_{e \in P_{vw}} \gamma(e)^\pm = 1 \  \end{equation} 
where $v,w$ range over elements of $\Ve^1(\Gamma)$ and $P_{vw}$ is the
unique non-self-crossing path from $v$ to $w$, and in the product the
sign is positive if $e$ is pointing towards the root edge marked $z_0$
and negative otherwise.
\end{definition} \vskip .1in 

Let $Z_\Gamma \subset \Map(\Edge_{< \infty}(\Gamma),\C)$ denote the
space of balanced labellings.  An element of $Z_\Gamma$ is called a
tuple of {\em gluing parameters}.

\begin{example} {\rm (A singularity in the moduli space)} 
For the tree $\Gamma$ in Figure \ref{cartierex2}, with large dots
indicating vertices in $\Ve^1(\Gamma)$, the relations are $\gamma_3 =
\gamma_4, \gamma_1 \gamma_3 = \gamma_2 \gamma_5, \gamma_5 = \gamma_6
$.  The corresponding toric variety $Z_\Gamma$ corresponds to a
$3$-dimensional cone with $4$ extremal rays, and so the moduli space
has a singularity at the vertex.   \end{example}

\begin{figure}[ht]
\begin{center} 
\begin{picture}(0,0)%
\includegraphics{cartierex.pstex}%
\end{picture}%
\setlength{\unitlength}{3947sp}%
\begingroup\makeatletter\ifx\SetFigFont\undefined%
\gdef\SetFigFont#1#2#3#4#5{%
  \reset@font\fontsize{#1}{#2pt}%
  \fontfamily{#3}\fontseries{#4}\fontshape{#5}%
  \selectfont}%
\fi\endgroup%
\begin{picture}(4544,1864)(2389,-1613)
\put(5277,-550){\makebox(0,0)[lb]{\smash{{\SetFigFont{8}{9.6}{\rmdefault}{\mddefault}{\updefault}{$\gamma_2$}%
}}}}
\put(2812,-1180){\makebox(0,0)[lb]{\smash{{\SetFigFont{8}{9.6}{\rmdefault}{\mddefault}{\updefault}{$\gamma_3$}%
}}}}
\put(3572,-1167){\makebox(0,0)[lb]{\smash{{\SetFigFont{8}{9.6}{\rmdefault}{\mddefault}{\updefault}{$\gamma_4$}%
}}}}
\put(5421,-1207){\makebox(0,0)[lb]{\smash{{\SetFigFont{8}{9.6}{\rmdefault}{\mddefault}{\updefault}{$\gamma_5$}%
}}}}
\put(6305,-1172){\makebox(0,0)[lb]{\smash{{\SetFigFont{8}{9.6}{\rmdefault}{\mddefault}{\updefault}{$\gamma_6$}%
}}}}
\put(3634,-550){\makebox(0,0)[lb]{\smash{{\SetFigFont{8}{9.6}{\rmdefault}{\mddefault}{\updefault}{$\gamma_1$}%
}}}}
\end{picture}%
\end{center} 
\caption{An example of a colored tree}
\label{cartierex2}
\end{figure}

\begin{proposition} \cite{mau:mult} \label{MAloc2}  There exists an open neighborhood
of $M_{n,1,\Gamma}(\bA)$ in $\ovl{M}_{n,1}(\bA)$ isomorphic to an
open neighborhood of $0 \times M_{n,1,\Gamma}(\bA)$ in $Z_\Gamma
\times M_{n,1,\Gamma}(\bA) .$
\end{proposition}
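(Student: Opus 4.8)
The plan is to realize the neighborhood through an explicit gluing (plumbing) construction and then verify that it is a local isomorphism, following the strategy of Ma'u--Woodward \cite{mau:mult}. Fix a stable nodal scaled affine line $(C,\ul{z},\lambda)$ of combinatorial type $\Gamma$, so that its irreducible components are indexed by $\Ve(\Gamma)$, its nodes by the finite edges $\Edge_{<\infty}(\Gamma)$, and (by the preceding Lemma) the open stratum $M_{n,1,\Gamma}(\bA)$ is the product over $v \in \Ve(\Gamma)$ of the moduli $M_{n_v,1}(\bA)$ or $M_{0,n_v}$ of the component at $v$. Given a tuple of gluing parameters $\gamma \in Z_\Gamma$ near $0$ together with a point of $M_{n,1,\Gamma}(\bA)$, I would smooth each node $e$ with $\gamma(e) \neq 0$ by the standard plumbing: in local coordinates $(x_e,y_e)$ whose two axes cut out the two branches, replace the node $\{x_e y_e = 0\}$ by the annulus $\{x_e y_e = \gamma(e)\}$, leaving nodes with $\gamma(e) = 0$ intact. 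This produces a continuous family of marked curves over a neighborhood of $0 \times M_{n,1,\Gamma}(\bA)$ in $Z_\Gamma \times M_{n,1,\Gamma}(\bA)$, smooth precisely when all $\gamma(e) \neq 0$.

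The crucial step is to equip each glued curve with a scaling and to explain why the balancing relations \eqref{balanced} are exactly what is needed. After normalizing the $\C$-action, the scaling is the standard form $\d z$ on every colored component and is the fixed section $0$ resp. $\infty$ of $\P(\omega_C \oplus \C)$ on the zero- resp. infinite-scaling components. Smoothing a node rescales the affine coordinate across the annulus $x_e y_e = \gamma(e)$ by a factor governed by $\gamma(e)$, with exponent $+1$ or $-1$ according to whether $e$ points toward the root marking $z_0$; this is the same sign convention as in the definition of $Z_\Gamma$. Transporting the normalization $\d z$ along a non-self-crossing path $P_{vw}$ between two colored vertices $v,w \in \Ve^1(\Gamma)$ and demanding that the two ends agree forces precisely $\prod_{e \in P_{vw}} \gamma(e)^\pm = 1$, while the phase ambiguity noted in Remark \ref{twoform} is absorbed by the residual translations of the colored components. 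Hence the plumbed curve carries a well-defined global scaling compatible with the monotonicity condition if and only if $\gamma \in Z_\Gamma$. Setting $\gamma(e) = 0$ retains the node $e$, so the central fiber over $0 \times M_{n,1,\Gamma}(\bA)$ is the type-$\Gamma$ curve itself, while turning on $\gamma(e) \neq 0$ smooths $e$ and lands in the adjacent stratum obtained by collapsing that edge (or, where the balancing relations couple several parameters, by collapsing edges with relations), in agreement with the boundary description of the preceding Lemma.

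It then remains to check that the resulting map from a neighborhood $U$ of $0 \times M_{n,1,\Gamma}(\bA)$ in $Z_\Gamma \times M_{n,1,\Gamma}(\bA)$ into $\ol{M}_{n,1}(\bA)$ is a homeomorphism onto an open neighborhood of $M_{n,1,\Gamma}(\bA)$. Continuity, and the fact that $\gamma \to 0$ recovers the original type-$\Gamma$ curve, follow directly from the definition of convergence of nodal scaled affine lines: the plumbing isomorphisms are the maps $\phi_{j,\nu}$ occurring there, and the three convergence conditions on scalings, markings, and reparametrizations are built into the construction. Injectivity on a small enough $U$ holds because the moduli of the components (cross-ratios of the special points) and the gluing parameters (sizes of the vanishing cycles) can be read off from any curve in the image. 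Surjectivity onto a neighborhood is the genuine analytic content: every stable scaled line close to the stratum degenerates to type $\Gamma$ and is therefore obtained by plumbing with small gluing data, which is the scaled analogue of the standard gluing theorem for $\ol{M}_{0,n}$, now tracking the extra section $\lambda$ of $\P(\omega_C \oplus \C)$.

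I expect the scaling bookkeeping of the second paragraph to be the main obstacle: one must prove that compatibility of the glued scaling is \emph{equivalent} to \eqref{balanced} and that no further relations intervene, i.e. that the transition factors multiply correctly around the tree and that the $U(1)$ phase is entirely accounted for by the automorphisms of the colored components. Once this local model is in place, $Z_\Gamma$ is an affine toric variety cut out by the binomial balancing equations, so the homeomorphism can be upgraded to the algebraic (locally toric) isomorphism of \cite{mau:mult}, which also yields the stated bound on the codimension of the singularities.
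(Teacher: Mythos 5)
Your proposal follows essentially the same route as the paper: the paper's proof is exactly this plumbing construction (removing small disks around the nodes and gluing via $z \mapsto \gamma/z$, as in the universal deformation of a genus zero nodal curve), with the scaling on the glued curve given by the product of gluing parameters along the path from the root to a colored component, well-defined precisely because of the balanced condition \eqref{balanced}. The additional verifications you sketch (continuity with respect to the convergence topology, injectivity, and surjectivity onto a neighborhood) are the content the paper delegates to the citation \cite{mau:mult}, and your scaling bookkeeping matches the paper's, so the argument is sound.
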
  

\begin{proof}   The construction is a version of the 
construction of the universal deformation of a genus zero nodal curve
\cite[p. 184]{ar:alg2} in which small balls around the nodes are
removed and the components glued together via maps $z \mapsto
\gamma/z$.  The scaling is determined by the product of the gluing
parameters from the root component to the irreducible components with
finite scaling, independent of the choice of irreducible component
with finite scaling by the balanced condition \eqref{balanced}.
\end{proof} 

\begin{remark} \label{dimcount} {\rm (Codimension formula)} The codimension of a stratum $\ovl{M}_{n,1}(\bA)$ corresponding to a colored tree
$\Gamma$ is {\em not} the number of finite edges, but rather
$$ \codim( M_{n,1,\Gamma}(\bA)) = \# \Edge_{< \infty}(\Gamma) +1 -  
\# \Ver^1(\Gamma) $$
where the extra summand $1 - \Ver^1(\Gamma)$ corresponds to the minus
the number of relations on the gluing parameters $(\gamma(e))_{e \in \Edge_{< \infty}(\Gamma)} \in
Z_\Gamma$.  
\end{remark} 

\begin{proposition}  The boundary of $\ovl{M}_{n,1}(\bA)$ consists
of the following subsets (which will be {\em divisors} with respect to
the algebraic structure on $\ovl{M}_{n,1}(\bA)$ introduced later):
\begin{enumerate} 
\item {\rm (Bubbling points)} For any $I \subset \{1,\ldots, n\}$ of order
  at least two the subset
$$\iota_I: D_I \to \ovl{M}_{n,1}(\bA)$$ 
corresponding to the formation of a single bubble containing the
markings $I$, with an isomorphism
\begin{equation} \label{homeo1} 
 D_I \to \ovl{M}_{0,|I| + 1} \times \ovl{M}_{n - |I|+1,1}(\bA)
 .\end{equation}
\item {\rm (Blowing up scaling)} For any $r > 0$ and unordered
  partition $[I_1,\ldots, I_r]$, $I_1 \cup \ldots \cup I_r = \{
  1,\ldots n \}$ of order at least two, with each $I_j$ non-empty, a
  subset $D_{[I_1,\ldots,I_r]}$ corresponding to the formation of $r$
  bubbles with markings $I_1,\ldots, I_r$, attached to a remaining
  component with infinite scaling.  The map forgetting all but the
  infinitely-scaled locus induces an isomorphism
\footnote{Equation
    \eqref{homeo2} is corrected from the published version.}
  \begin{equation} \label{homeo2} D_{[I_1,\ldots,I_r] } \to
    \ovl{M}_{r,1}(\bA) \times \left( \prod_{i=1}^r
      \ovl{M}_{|I_i|,1}(\bA) \right).
 \end{equation}
\end{enumerate}
\end{proposition}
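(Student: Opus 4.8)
The plan is to read the codimension-one strata off the combinatorics of colored trees using the codimension formula of Remark \ref{dimcount}, and then to identify their closures through the product decomposition of strata recorded in the Lemma above. First I would simplify that formula: since the combinatorial type $\Gamma$ of a connected stable scaled line is a tree, one has $\#\Edge_{<\infty}(\Gamma) = \#\Ve(\Gamma) - 1$, and the partition $\Ve(\Gamma) = \Ve^0(\Gamma) \cup \Ve^1(\Gamma) \cup \Ve^\infty(\Gamma)$ gives
$$ \codim(M_{n,1,\Gamma}(\bA)) = \#\Edge_{<\infty}(\Gamma) + 1 - \#\Ve^1(\Gamma) = \#\Ve^0(\Gamma) + \#\Ve^\infty(\Gamma). $$
Hence a stratum is a divisor exactly when $\Gamma$ has a single non-colored vertex, leaving precisely the two cases $(\#\Ve^0,\#\Ve^\infty) = (1,0)$ and $(0,1)$.

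Next I would pin down the shape of $\Gamma$ in each case from the monotonicity and stability conditions. In the case $(1,0)$ the unique zero-scaling vertex lies below the water level, so it cannot separate a colored vertex from the root; since there are no infinite-scaling vertices the root $z_0$ sits on a colored vertex, and stability rules out a second colored vertex (any such vertex would carry markings below it, producing an input-to-root path through two colored vertices, contradicting monotonicity). Thus $\Gamma$ has one colored vertex carrying $z_0$ with one zero-scaling bubble attached by a single edge; stability of the bubble forces it to carry a set $I$ of at least two markings, giving the bubbling stratum $D_I$. In the case $(0,1)$ the unique infinite-scaling vertex lies above the water level and therefore carries $z_0$; by the same monotonicity argument no two colored vertices are adjacent, so every colored vertex attaches to it directly. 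Hence $\Gamma$ consists of $r$ colored bubbles joined to one infinite-scaling component, with markings distributed as a partition $[I_1,\ldots,I_r]$, and stability of that component (root together with $r$ nodes) forces $r \ge 2$. This is the scaling-blowup stratum $D_{[I_1,\ldots,I_r]}$.

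Having classified the divisorial strata, I would obtain the two homeomorphisms from the product decomposition of $\ol{M}_{n,1,\Gamma}(\bA)$ in the Lemma above, under which each colored vertex contributes a factor $\ol{M}_{\bullet,1}(\bA)$ and each degenerate-scaling vertex a factor $\ol{M}_{0,\bullet}$, the indices read off from the markings and nodes carried by the vertex. For $D_I$ the principal colored component retains the $n-|I|$ markings outside $I$ together with the node, contributing $\ol{M}_{n-|I|+1,1}(\bA)$, while the zero-scaling bubble carries $I$ and one node, contributing $\ol{M}_{0,|I|+1}$; this is \eqref{homeo1}. For $D_{[I_1,\ldots,I_r]}$ each colored bubble carries $I_i$ together with its root node, contributing $\ol{M}_{|I_i|,1}(\bA)$, while the infinite-scaling component, an unparametrized sphere carrying $z_0$ and the $r$ attaching nodes, contributes the corresponding genus-zero factor; this is \eqref{homeo2}. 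That these closed strata exhaust the boundary then follows because every deeper stratum arises by further collapsing of edges and so lies in the closure of one of the two codimension-one types.

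I expect the main obstacle to be not the combinatorial bookkeeping but the verification that these set-theoretic identifications are homeomorphisms for the analytic topology, i.e. that they are compatible with the convergence of scaled lines defined earlier and with the gluing construction of Proposition \ref{MAloc2}. Concretely, one must check that as the gluing parameters $(\gamma(e))_e \in Z_\Gamma$ tend to zero the scalings on the glued curves converge to the degenerate scalings prescribed by $\Gamma$; here the balanced condition \eqref{balanced} is exactly what makes the scaling on the glued curve well defined and independent of the chosen path to a finite-scaling component, and matching this against the notion of convergence is the technical heart of the argument.
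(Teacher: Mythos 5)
Your proposal is correct and is essentially the paper's own (largely implicit) argument: the proposition is stated there without a separate proof, the intended justification being exactly the combination you spell out of the codimension formula of Remark \ref{dimcount} with the product decomposition of strata and the gluing/tubular-neighborhood description of Proposition \ref{MAloc2} — indeed the analogous proposition for $\ol{M}_{n,1}(C)$ is proved in the paper by precisely this one-line appeal to ``the tubular neighborhood description of each stratum and a dimension count.'' One incidental remark: your count of special points on the infinite-scaling component ($z_0$ plus $r$ nodes) yields the factor $\ol{M}_{0,r+1}$, which is also what the dimension count forces for a codimension-one stratum (dimensions $n-r$ plus $r-2$ summing to $n-2$) and what the valence convention in the preceding Lemma gives, so the factor $\ol{M}_{0,r}$ printed in \eqref{homeo2} is off by one and your version is the correct one.
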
 

\begin{remark}
\begin{enumerate} 
\item The inclusions of these subspaces give the collection of spaces
$\ovl{M}_{n,1}(\bA)$ the structure of an algebra over the operad
associated to the notion of {\em homotopy morphism} of operads in
\cite{ma:op}.  However, we will not use or need this language
and will not discuss it further. 
\item {\rm (Relation to the multiplihedon)} 
The moduli space $\ovl{M}_{n,1}(\bA)$ has a ``positive real
  locus'' that appears in Stasheff's description of \ainfty morphisms
  \cite{st:hs}.  Namely, taking a real structure on $\bA$, the
  anti-holomorphic involution on $\bA$ induces an anti-holomorphic
  involution of $\ovl{M}_{n,1}(\bA)$. We denote by
  $\ovl{M}_{n,1}(\bA)^\R$ the fixed point locus, in which all markings
  are on the real line.  The symmetric group $S_n$ acts on
  $\ovl{M}_{n,1}(\bA)$, and restricts to an action on
  $\ovl{M}_{n,1}(\bA)^\R$ with fundamental domain given as the closure
  of the subset $M_{n,1}(\bA)^{\R,+}$ where $z_1 < z_2 < \ldots <z_n$,
  homeomorphic to Stasheff's multiplihedron $\Mult_n$ \cite{mau:mult}.
An \ainfty morphism of \ainfty spaces $X,Y$ consists of a sequence 
of maps 
$$ X^n \times \Mult_n \to Y, n \ge 0 $$
satisfying a suitable splitting axiom on the boundary.
\end{enumerate}
\end{remark} \vskip .1in  

The splitting axiom for morphisms of CohFT algebras is defined via
divisors on $\ovl{M}_{n,1}(\bA)$.  Because the singularities of the
toric variety $Z_\Gamma$ occur in complex codimension at least three,
$\ovl{M}_{n,1}(\bA)$ has a unique homology class of top dimension.  In
particular, each of the boundary divisors above has a well-defined
homology class in $\ovl{M}_{n,1}(\bA)$.  However, $\ovl{M}_{n,1}(\bA)$
is not smooth (and not a rational homology manifold) and not every
boundary stratum has a dual class.  That is, given a divisor
\begin{equation} \label{coeff}
 D = \sum_I n_I D_I + \sum_{r,[I_1,\ldots,I_r]} n_{[I_1,\ldots,I_r]} D_{[I_1,\ldots,I_r] }
\end{equation}
there may or may not exist a class $\gamma \in H^2(\ovl{M}_{n,1}(\bA))$
that satisfies
$$ \langle \beta, [D] \rangle = \langle \beta \cup \gamma ,
[\ovl{M}_{n,1}(\bA)] \rangle .$$
This requires the restriction to combinations of boundary divisors
that have dual classes in the following definition. 

Let $(V,(\mu_V^n)_{n \ge 2})$ and $(W,(\mu_W^n)_{n \ge 2})$ be CohFT
algebras.

\begin{definition}  \label{morphismcohfts} A {\em morphism of
CohFT algebras} from $V$ to $W$ is a collection of $S_n$-invariant
  (with Koszul signs) multilinear maps
$$ \phi^n: V^n \times H(\ovl{M}_{n,1}(\bA)) \to W, \quad n
  \ge 0 $$
such that for any divisor $D$ of the form \eqref{coeff} with dual
class $\gamma \in H^2(\ovl{M}_{n,1}(\bA))$ we have
\begin{multline} \label{weaksplit}
\phi^n(\alpha, \beta \cup \gamma) = \sum_{I} n_I \phi^{n - |I|+1} (
\mu_V^{|I|}(\alpha_i, i \in I; \cdot ) ,\alpha_j, j \notin I;
\cdot)(\iota_I^* \beta) \\ + \sum_{r, [I_1,\ldots,I_r]} n_{[I_1,
  \ldots,I_r]} \mu_W^r( \phi^{I_1}( \alpha_i, i \in I_1; \cdot),
\ldots, \phi^{I_r}( \alpha_i, i \in I_r; \cdot); \cdot ) 
(\iota_{[I_1,\ldots,I_r]}^* \beta),
\end{multline}
where the sum is over unordered partitions $[I_1, \ldots , I_r]$ of
$\{1 ,\ldots ,n \}$ with some $I_j$ possibly empty, $\cdot$ indicates
insertion of the K\"unneth components of $\iota_I^* \beta$,
$\iota_{[I_1,\ldots,I_r]}^* \beta$, using the homeomorphisms
\eqref{homeo1}, \eqref{homeo2}, the sum on the right-hand-side is
assumed finite, and by convention if $n = 0$ we replace
$H(\ovl{M}_{n,1}(\bA))$ with $\Lambda$ (since in this case
$\ovl{M}_{n,1}(\bA)$ is empty).  A morphism of CohFT algebras $\phi$ is
{\em flat} resp. {\em curved} if $\phi^0$ is zero resp. non-zero.
\end{definition} \vskip .1in
\noindent 

\begin{remark}  \label{Drem}
\begin{enumerate} 
\item In our examples, $\Lambda = \cup_{a \in \R} \Lambda_{\ge a}$
  will be a filtered ring, and $V = \cup_{a \in \R} V_{\ge a}$, $W =
  \cup_{a \in \R} W_{\ge a}$ filtered $\Lambda$-modules.  We say that
  a {\em morphism of filtered CohFT algebras} is defined as above but
  where the right-hand-side of \eqref{weaksplit} is finite modulo
  $W_{\ge a}$ for any $a \in \R$.
\item See Nguyen-Woodward-Ziltener \cite{cartier} for a description of
  the space of Cartier divisors in $\ovl{M}_{n,1}(\bA)$, that is, a
  description of which combinations of codimension two strata have
  dual classes.
\item 
The definition of flat morphism of CohFT algebras (which has nothing
to do with flat morphism of rings etc.)  is analogous to the
definition of flat morphism of \ainfty algebras in \cite{fooo}.  That
is, $\phi^0$ is analogous to the {\em curvature} of an $A_\infty$
morphism.
\item 
The divisors $D_{\{ 1 \}, \{ 2 \}}, D_{ \{ 1, 2 \}} \subset
\ovl{M}_{2,1} \cong \P$ are points (the limiting points in Figures
\ref{twoconverge}, \ref{twodiverge}) and so have the same homology
class.  Using the splitting axiom \eqref{weaksplit} this implies that
if $\phi^0$ vanishes then $\phi^1$ is a homomorphism from
$(V,\mu^2_V)$ to $(W,\mu^2_W)$.  This is an analog of the fact that a
flat \ainfty morphism induces an algebra homomorphism of cohomology
groups.  
\item For simplicity we will consider here only the even case, that
  is, $V$ is a usual vector space and there are no signs. 
\end{enumerate}
\end{remark} \vskip .1in 

Now we discuss the connection of morphisms of CohFT algebras with
Frobenius manifolds, or rather, the underlying family of algebras:

\begin{definition}   Let $V, W$ be vector spaces equipped
with associative products $\star_v: T_vV^2 \to T_v V, \star_w: T_wW^2
\to T_w W$ varying smoothly in $v,w$.  A {\em $\star$-morphism} from
$V$ to $W$ is an analytic map $\phi: V \to W$ whose derivative $D_v
\phi$ is a morphism of algebras from $T_v V $ to $T_{\phi(v)}W $ for
all $v \in V$.
\end{definition} \vskip .1in 

In particular, if $\phi: V \to W$ is a $\star$-morphism with Taylor
coefficients $\phi^n$ then $D_0 \phi = \phi^1$ is an algebra
homomorphism from $T_0 V$ to $T_{\phi^0(1)} W$.

\begin{proposition}
Any morphism of CohFT algebras $( \phi^n )_{n \ge 0}$ from $V$ to $W$
defines a formal $\star$-morphism from $V$ to $W$ via the formula
$$ \phi: V \to W, \quad v \mapsto \sum_{n \ge
  0} \frac{1}{n!} \phi^n(v,\ldots, v) .$$ 
$\phi$ arises from a flat morphism of CohFT algebras iff $\phi(0) =0$.
\end{proposition}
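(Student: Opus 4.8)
The plan is to reduce everything to the splitting axiom \eqref{weaksplit} applied to a single nullhomologous boundary divisor, obtained by pulling back the relation between the two boundary points of $\ol{M}_{2,1}(\bA)\cong\P$. First I record the generating-series dictionary. For a CohFT algebra the relevant family of products is $\xi_1\star_v\xi_2=\sum_{n\ge0}\frac{1}{n!}\mu_V^{n+2}(\xi_1,\xi_2,v,\ldots,v;1)$ on $T_vV$, with the same formula defining $\star_w$ on $T_wW$, and differentiating the defining series of $\phi$ gives $D_v\phi(\xi)=\sum_{m\ge0}\frac{1}{m!}\phi^{m+1}(\xi,v,\ldots,v;1)$. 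The flat/curved dichotomy is then immediate: by multilinearity $\phi^n(0,\ldots,0;1)=0$ for $n\ge1$, so $\phi(0)=\phi^0$, and by Definition \ref{morphismcohfts} the morphism is flat exactly when $\phi^0=0$; hence $\phi$ comes from a flat morphism iff $\phi(0)=0$. It remains to prove that $D_v\phi$ is an algebra morphism $T_vV\to T_{\phi(v)}W$ for every $v$, that is, $D_v\phi(\xi_1\star_v\xi_2)=D_v\phi(\xi_1)\star_{\phi(v)}D_v\phi(\xi_2)$, as an identity of formal series in $v$.

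To prove this I would fix $n$ and work on $\ol{M}_{n+2,1}(\bA)$ with markings $1,2,3,\ldots,n+2$, inserting $\xi_1,\xi_2$ at $1,2$ and $v$ at the remaining markings. Let $\pi\colon \ol{M}_{n+2,1}(\bA)\to\ol{M}_{2,1}(\bA)\cong\P$ be the forgetful morphism dropping the markings $3,\ldots,n+2$ and stabilizing, built by iterating the forgetting-tails morphisms constructed above. In $\ol{M}_{2,1}(\bA)\cong\P$ the two boundary points $D_{\{1,2\}}$ (bubbling of points) and $D_{[\{1\},\{2\}]}$ (blowing up the scaling) are homologous, both dual to the point class. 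Consequently $D:=\pi^*D_{\{1,2\}}-\pi^*D_{[\{1\},\{2\}]}$ is a Cartier divisor of the form \eqref{coeff} whose dual class is $\gamma=0$. Applying the splitting axiom \eqref{weaksplit} to $D$ with $\beta=1$ makes the left-hand side $\phi^{n+2}(\alpha,1\cup 0)=0$, so the two sums on the right must cancel.

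The content is then the identification of the two pullbacks and the assembly of the two sums. On the one hand $\pi^*D_{\{1,2\}}=\sum_{\{1,2\}\subseteq I}D_I$, a combination of bubbling-point divisors \eqref{homeo1}; writing $k=|I|-2$ for the number of $v$-markings on the bubble and $m=n-k$ for those on the main component, the first sum of \eqref{weaksplit} contributes, for fixed $n$, terms $\phi^{m+1}(\mu_V^{k+2}(\xi_1,\xi_2,v^{k};1),v^{m};1)$. Summing over $n$ with weights $1/n!$ and using $\binom{n}{k}/n!=1/(k!\,m!)$ collapses the inner factor to $\xi_1\star_v\xi_2$ and the outer factor to $D_v\phi$, producing $D_v\phi(\xi_1\star_v\xi_2)$. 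On the other hand $\pi^*D_{[\{1\},\{2\}]}=\sum D_{[I_1,\ldots,I_r]}$ runs over all partitions with $1\in I_1$, $2\in I_2$ and the remaining blocks $I_3,\ldots,I_r$ carrying only $v$-markings (each such $D_{[I_1,\ldots,I_r]}$ is still a divisor by the codimension formula of Remark \ref{dimcount}); the second sum of \eqref{weaksplit} feeds $\mu_W$ one factor $\phi^{|I_1|}(\xi_1,v^{|I_1|-1};1)$, one factor $\phi^{|I_2|}(\xi_2,v^{|I_2|-1};1)$, and one factor $\phi^{|I_j|}(v^{|I_j|};1)$ for each pure-$v$ block. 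By the exponential formula, summing over the number $p=r-2$ and sizes of the pure-$v$ blocks—empty blocks being permitted in Definition \ref{morphismcohfts}, which is precisely what incorporates the constant term $\phi^0$—rebuilds $\phi(v)$ as repeated insertions at the $\mu_W$-vertex, so this side assembles to $\sum_{p\ge0}\frac1{p!}\mu_W^{p+2}(D_v\phi(\xi_1),D_v\phi(\xi_2),\phi(v),\ldots,\phi(v);1)=D_v\phi(\xi_1)\star_{\phi(v)}D_v\phi(\xi_2)$. Cancellation then gives the identity.

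The main obstacle is the divisor bookkeeping in the previous paragraph: one must verify that $\pi^*D_{\{1,2\}}$ and $\pi^*D_{[\{1\},\{2\}]}$ decompose into exactly these boundary strata with all multiplicities equal to one, and that the resulting combinations genuinely admit dual classes so that \eqref{weaksplit} applies—the delicate point flagged in Remark \ref{Drem} and governed by the Cartier-divisor analysis of Nguyen-Woodward-Ziltener \cite{cartier}. Since both divisors are pullbacks of a point of the smooth curve $\P$, they are Cartier and their difference is nullhomologous, which is what forces $\gamma=0$; the only real labor is the local transversality computation showing the multiplicities are one, standard for forgetful morphisms of these Fulton-MacPherson-type compactifications \cite{mau:mult}. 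The remaining generating-function manipulation is the routine exponential-formula computation indicated above, and the whole argument is the evident generalization of the $n=0$ case recorded in Remark \ref{Drem}(4), where $\phi^1$ is shown to be an algebra homomorphism.
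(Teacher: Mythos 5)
Your proposal is correct and follows essentially the same route as the paper's proof: pulling back the relation $[D_{\{1,2\}}]=[D_{[\{1\},\{2\}]}]$ in $H^2(\ol{M}_{2,1}(\bA))$ under the forgetful morphism, checking multiplicity one at each boundary divisor via the gluing parameters, and then running the factorial/exponential-formula bookkeeping through the splitting axiom \eqref{weaksplit}, with the same observation that $\phi(0)=\phi^0$ settles the flatness claim. Your phrasing via the nullhomologous difference divisor with dual class $\gamma=0$ is only a cosmetic repackaging of the paper's direct equality of the two expansions.
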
 

\begin{proof}  For convenience, we reproduce the argument from \cite[Proposition 2.43]{cartier}. 
Consider the relation
$ [D_{ \{ 1,2 \} }] = [D_{ \{ 1 \}, \{ 2 \}}] \in
H^2(\ovl{M}_{2,1}(\bA)) $
from Remark \ref{Drem} (d).  Its pull-back under the morphism
$\ovl{M}_{n,1}(\bA) \to \ovl{M}_{2,1}(\bA)$ forgetting all but the first
two markings is the relation
\begin{equation} \label{pullback} 
\sum_{r, [I_1,\ldots,I_r]} [D_{[I_1,I_2,\ldots, I_r]}] = \sum_{I}
    [D_I]
\end{equation} 
where the first sum is over partitions $I_1,\ldots, I_r$ with $1 \in
I_1,2 \in I_2$, and the second is over subsets $I \subset \{1,\ldots,
n \}$ with $\{ 1,2 \} \subset I$.  Indeed, the pull-back of the
coordinate on $\ovl{M}_{2,1}(\bA)$ to $\ovl{M}_{n,1}(\bA)$ is equal to
any of the (equal) gluing parameters at the nodes at a generic point
in any divisor $D_{[I_1,I_2,\ldots, I_r]}$ appearing on the
left-hand-side, see Ma'u-Woodward \cite{mau:mult}, and so has a zero
of order one at that divisor.  On the other hand, the pull-back is the
inverse of the gluing parameter at a generic point in any divisor
$D_I$ appearing on the right-hand side, and so has a pole of order one
at that divisor.  The splitting axiom implies for each $a,b \in T_v V$
\begin{eqnarray*}
D_v \phi(a \star_v b) 
&=&\sum_{n,i}\frac{1}{(i-2)!(n-i)!}\phi^{n-i+1}\big(\mu^i_V(a,b,v,\ldots,v;1),v,\ldots,v;1\big)\\
&=&\sum_{n,I} ((n-2)!)^{-1} \phi^{n-|I|+1}\big(\mu^{|I|}_V(a,b,v,\ldots,v;1),v,\ldots,v;1\big)\\
&=& \sum_{I_1 \ni 1, I_2 \ni 2, I_3,\ldots,I_r} 
((n-2)! \# \{j \, | \, I_j = \emptyset \} !)^{-1} 
\mu^r_W \big(
\phi^{|I_1|}(a,v,\ldots,v;1),\\
&&\phi^{|I_2|}(b,v,\ldots,v;1),\phi^{|I_3|}(v,\ldots,v;1),\ldots,\phi^{|I_r|}(v,\ldots,v;1);1\big) \\
&=& \sum_{i_1,i_2 \ge 1,i_3 \ldots,i_r\geq 0}\frac{1}{(i_1-1)!(i_2-1)!i_3!\cdots i_r!(r-2)!}\mu_W^r\big(\phi^{i_1}(a,v,\ldots,v;1),\\
&&\phi^{i_2}(b,v,\ldots,v;1),\phi^{i_3}(v,\ldots,v;1),\ldots,\phi^{i_r}(v,\ldots,v;1);1\big) \\
&=&\sum_{r}
\frac{1}{(r-2)!}\mu_W^r\big(D_v \phi(a),D_v \phi(b),\phi(v),\ldots, \phi(v);1) \\ 
&=& D_v \phi(a) \star_{\phi(v)} D_v \phi(b) 
\end{eqnarray*}
where the right-hand-side is defined formally, that is, via Taylor
series.  By definition $\phi(0) = 0$ iff $\phi^0$ vanishes iff
$(\phi^n)_{n \ge 0}$ is flat.
\end{proof} 

\subsection{Compositions of morphisms and traces} 
\label{compose}

Morphisms and traces on a CohFT algebra admit a notion of composition,
which generalizes the usual homotopy notions of composition in the
\ainfty setting.  The definition of $2$-morphism of a composition of a
trace with a morphism depends on a moduli space of {\em scaled
  parametrized curves} which combines features of the complexified
multiplihedron and cyclohedron.  Let $M_{n,1}(C)$ denote the space of
$n$-marked $1$-scaled curves with underlying curve $C$; we do not
quotient by automorphisms of $C$.  The space $M_{n,1}(C)$ admits a
compactification $\ovl{M}_{n,1}(C)$ by allowing {\em stable scaled
  curves} allowing bubbles with zero area form or allowing the area
form on $C$ to degenerate to zero and {\em affine scaled curves} to
develop as bubbles.  Recall that any nodal map $u: \hat{C} \to C$ of
class $[C]$ has a {\em relative dualizing sheaf} given as the tensor
product of the dualizing sheaf $T^\dual \hat{C}$ for $\hat{C}$ and the
inverse of the pull-back of the cotangent bundle $T^\dual C$ to $C$:
$$T_u^\dual := T^\dual \hat{C} \otimes ( u^* T^\dual C)^{-1} ;$$
(More detail is given below in \cite[Example 4.2]{qk2}.)  
%CW 
We denote by $ \P(T_u^\dual \oplus \C) $ the associated bundle with
projective line fibers.

\begin{definition}  {\rm (Nodal Scaled Marked Curves)}  
\label{scaledcurves} 
Let $u: \hat{C} \to C$ be a map of class $[C]$.  A {\em scaling form}
is a section $\lambda: \hat{C} \to \P(T_u^\dual \oplus \C) $ such that
on any connected component $\hat{C}'$ of $\hat{C} \ssm C_0$ (that is,
bubble tree attached to the principal component) the pair $(\hat{C}',
\lambda | \hat{C}')$ is an affine scaled curve (if the scaling
$\lambda$ is infinite on $C_0$) or has zero scaling, otherwise.
A {\em nodal scaled curve} parametrized by $C$ is a map $\hat{C} \to
C$ equipped with a scaling form.  An {\em isomorphism} of nodal scaled
parametrized curves $(\hat{C}_j, u_j, \omega_j), j = 0,1$, is an
isomorphism $\psi: \hat{C}_0 \to \hat{C}_1$ such that
\begin{enumerate}
\item {\rm (Scalings are intertwined)} $\psi^* \lambda_1 = \lambda_0$.

\item {\rm (Markings are intertwined)} $\psi(z_{0,i}) = z_{1,i}, i
  =1,\ldots, n$;
\item {\rm (Parametrization is intertwined)} $\psi \circ u_0 = u_1$.
\end{enumerate}  
A nodal scaled parametrized curve is {\em stable} iff it has no
infinitesimal automorphisms, that is, each irreducible non-principal
component has at least three special points or a non-degenerate
scaling and two special points.  The {\em combinatorial type} of a
nodal scaled parametrized curve is a colored tree $\Gamma =
(\Ve(\Gamma),\Edge(\Gamma))$ with finite resp. semiinfinite edges
$\Edge_{< \infty}(\Gamma)$ resp. $\Edge_\infty(\Gamma)$ obtained by
replacing every irreducible component by a vertex and every node or
marking with an edge, equipped with a {\em root vertex} $v_0 \in
\Ve(\Gamma)$ corresponding to the principal component and a partition
$$ \Ve(\Gamma) = \Ve^0(\Gamma) \cup \Ve^1(\Gamma) \cup
\Ve^\infty(\Gamma) $$
corresponding to the irreducible components with zero resp. finite
resp. infinite scalings.  A rooted colored tree is {\em stable} if it
corresponds to a stable scaled curve, that is, every non-root vertex
in $\Ve^0(\Gamma)$ or $\Ve^\infty(\Gamma)$ resp. $\Ve^1(\Gamma)$ has
at least $3$ resp. $2$ incident edges.
\end{definition} \vskip .1in 

In other words, a stable scaled curve is a copy of the curve $C$ with
a section of the trivial bundle (necessarily constant) and a
collection of stable curves attached (if the scaling on the principal
component is zero or finite) or a curve with infinite scaling and a
collection of stable scaled affine lines attached ( if the scaling on
the principal component is infinite).

Let $\ovl{M}_{n,1}(C)$ denote the moduli space of isomorphism classes
of $n$-marked, scaled curves with principal component $C$, and
$M_{n,1,\Gamma}(C)$ the subset of combinatorial type $\Gamma$ so that
$$ \ovl{M}_{n,1}(C) = \cup_\Gamma M_{n,1,\Gamma}(C) .$$
The topology on $\ovl{M}_{n,1}(C)$ is similar to that for
$\ovl{M}_{n,1}(\bA)$ and is compact and Hausdorff.  The local structure
of $\ovl{M}_{n,1}(C)$ near the stratum $M_{n,1,\Gamma}(C)$ of nodal
lines with combinatorial type $\Gamma$ may be described as follows.
As in \eqref{balanced}, for any colored rooted tree $\Gamma$, a
labelling 
$$\gamma: \Edge_{< \infty}(\Gamma) \to \C $$ 
is {\em balanced} iff
\begin{equation} \label{balanced2}
\prod_{e \in P_{vw}} \gamma(e)^{\pm 1} = 1 \end{equation}
where $v,w$ range over elements of $\Ve^1(\Gamma)$ and $P_{vw}$ is the
unique non-self-crossing path from $v$ to $w$, and the sign in the
exponent is positive if $e$ is pointing towards the root vertex and
negative otherwise.  Let $Z_\Gamma$ denote the set of balanced
labellings.  As in \cite{mau:mult} for each rooted tree $\Gamma$ there
exists a tubular neighborhood of the form
$$ M_{n,1,\Gamma}(C) \times Z_\Gamma \to \ovl{M}_{n,1}(C) $$
given by removing small neighborhoods of the nodes and gluing together
using identifications depending on the gluing parameters.

\begin{example}   Suppose that $\hat{C}$ consists of a principal component $C_0 \cong C$ with infinite
scaling, two other components with infinite scaling (dark shading),
four components with finite scaling (medium shading), and two
components with zero scaling (light shading) as shown in Figure
\ref{grels}.  The relations on the gluing parameters $\gamma_1 =
\gamma_2, \gamma_3 = \gamma_4, \gamma_1 \gamma_5 = \gamma_3 \gamma_6$
imply that the curve obtained with gluing with non-zero gluing
parameters is equipped with the area form $\gamma_1 \gamma_5 \omega_C
= \gamma_2 \gamma_5 \omega_C = \gamma_3 \gamma_6 \omega_C = \gamma_4
\gamma_6 \omega_C$.
\end{example} 

\begin{figure}[ht]
\begin{center} 
\begin{picture}(0,0)%
\includegraphics{relations.pstex}%
\end{picture}%
\setlength{\unitlength}{4144sp}%
\begingroup\makeatletter\ifx\SetFigFont\undefined%
\gdef\SetFigFont#1#2#3#4#5{%
  \reset@font\fontsize{#1}{#2pt}%
  \fontfamily{#3}\fontseries{#4}\fontshape{#5}%
  \selectfont}%
\fi\endgroup%
\begin{picture}(4110,1553)(1899,-3608)
\put(4840,-2624){\makebox(0,0)[lb]{{{{$z_1$}%
}}}}
\put(4146,-2221){\makebox(0,0)[lb]{{{{$z_2$}%
}}}}
\put(2763,-3261){\makebox(0,0)[lb]{{{{{$\gamma_1$}%
}}}}}
\put(2801,-2636){\makebox(0,0)[lb]{{{{{$\gamma_2$}%
}}}}}
\put(4400,-2483){\makebox(0,0)[lb]{{{{{$\gamma_3$}%
}}}}}
\put(4595,-3082){\makebox(0,0)[lb]{{{{{$\gamma_4$}%
}}}}}
\put(3374,-2849){\makebox(0,0)[lb]{{{{{$\gamma_5$}%
}}}}}
\put(4182,-3231){\makebox(0,0)[lb]{{{{{$\gamma_6$}%
}}}}}
\put(3086,-2416){\makebox(0,0)[lb]{{{{$z_3$}%
}}}}
\put(5994,-2900){\makebox(0,0)[lb]{{{{{$\gamma_1 = \gamma_2, \gamma_3 = \gamma_4$, }%
}}}}}
\put(5994,-3125){\makebox(0,0)[lb]{{{{{$\gamma_1 \gamma_5 = \gamma_3 \gamma_6$}%
}}}}}
\put(2084,-3076){\makebox(0,0)[lb]{{{{$z_5$}%
}}}}
\put(3661,-3391){\makebox(0,0)[lb]{{{{{$C_0$}%
}}}}}
\put(2572,-2723){\makebox(0,0)[lb]{{{{{$\gamma_7$}%
}}}}}
\put(2243,-3282){\makebox(0,0)[lb]{{{{{$\gamma_8$}%
}}}}}
\put(2015,-3208){\makebox(0,0)[lb]{{{{$z_6$}%
}}}}
\put(2438,-2637){\makebox(0,0)[lb]{{{{$z_4$}%
}}}}
\put(2430,-2504){\makebox(0,0)[lb]{{{{$z_7$}%
}}}}
\end{picture}%
\end{center} 
\caption{Gluing relations on a marked scaled curve} 
\label{grels}
\end{figure}

\begin{proposition}  
The boundary of $\ovl{M}_{n,1}(C)$ is the union of the following sets
(which will be {\em divisors} once the algebraic structure on
$\ovl{M}_{n,1}(C)$ is introduced)
\begin{enumerate}
\item {\rm (Bubbling points)} For any subset $I \subset \{1, \ldots, n \}$
  of order at least two we have a subspace
$$ \iota_I: D_I \to \ovl{M}_{n,1}(C) $$
and an isomorphism 
$$ \varphi_I: D_I \to \ovl{M}_{0,|I|+1} \times \ovl{M}_{n- |I| + 1,1}(C) $$
corresponding to the formation of a bubble with markings $z_i, i \in
I$ with zero scaling.  
\item {\rm (Blowing up scaling)} For any unordered partition $[I_1,
  \ldots,I_r]$ of $\{1,\ldots, n\}$ we have a subspace
$$ \iota_{[I_1,\ldots,I_r]} :D_{[I_1,\ldots,I_r]} 
\to \ovl{M}_{n,1}(C) $$
and an fibration forgetting all but the infinitely-scaled component
$$  D_{[I_1,\ldots,I_r]}
\to \ovl{M}_{r}(C)$$
whose fiber isomorphic to $\prod_{j=1}^r \ovl{M}_{|I_j|,1}(\bA)$
describes bubbles with non-degenerate area form containing the
markings.\footnote{This sentence corrected from the published
  version.}
\item {\rm (Fixing a scaling)} For any $\rho \in \C$, in particular
  for $\rho = 0$, there is an inclusion
$$\iota_\rho: \ovl{M}_{n}(C) \to \ovl{M}_{n,1}(C) $$
by choosing any scaling ${\rho} \omega_C$.
\end{enumerate} 
\end{proposition}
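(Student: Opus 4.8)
The plan is to classify the codimension-one strata of $\ol{M}_{n,1}(C)$ and read off the three families. Recall from Definition \ref{scaledcurves} that the strata $M_{n,1,\Gamma}(C)$ are indexed by stable rooted colored trees $\Gamma$, and that by the tubular neighborhood statement preceding the proposition a neighborhood of $M_{n,1,\Gamma}(C)$ is modeled on $M_{n,1,\Gamma}(C) \times Z_\Gamma$, where $Z_\Gamma$ is the space of balanced gluing parameters of \eqref{balanced2}; thus the boundary divisors are exactly the strata of codimension one. The organizing device is the \emph{scaling map}
$$ s \colon \ol{M}_{n,1}(C) \to \P, \qquad (C,\ul{z},\lambda) \mapsto \lambda|_{C_0} \in \C \cup \{\infty\}, $$
which is well defined and continuous because $T_u^\dual$ is trivial on the principal component $C_0 \cong C$, so the scaling there is a constant. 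I would treat the finite-scaling locus $s^{-1}(\C)$ and the infinite-scaling locus $s^{-1}(\infty)$ separately.

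On $s^{-1}(\C)$ every bubble tree attached to $C_0$ carries zero scaling by Definition \ref{scaledcurves}, so forgetting the value $\rho = \lambda|_{C_0}$ produces precisely a stable $C$-parametrized curve, i.e. a point of $\ol{M}_n(C)$. This gives a homeomorphism $s^{-1}(\C) \cong \ol{M}_n(C) \times \C$ under which the inclusion of the fiber over a fixed $\rho$ is the map $\iota_\rho$ of item (c), and the fiber over $\rho = 0$ is the boundary divisor $\iota_0(\ol{M}_n(C))$ (the fibers over $\rho \ne 0$ meet the open stratum $\Conf_n(C) \times \C^\times$ and are the interior slices recorded there, while $\rho=0$ is disjoint from it). Transporting the boundary description of $\ol{M}_n(C)$ established above through this identification, the locus where the markings $i \in I$ have bubbled onto a zero-scaled sphere becomes the divisor $D_I$; composing the isomorphism $\ol{M}_{0,|I|+1} \times \ol{M}_{n-|I|+1}(C)$ provided there with the extra scaling factor $\C$ and passing to closures (which adjoins the infinite-scaling locus of the residual curve) yields the stated $\varphi_I \colon D_I \to \ol{M}_{0,|I|+1} \times \ol{M}_{n-|I|+1,1}(C)$ of item (a).

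On $s^{-1}(\infty)$ the principal scaling is infinite, so by Definition \ref{scaledcurves} each connected bubble tree attached to $C_0$ is an affine scaled curve in the sense of Definition \ref{affinescaled}, and stability forces each such tree to carry at least one marking. Grouping the markings by the bubble tree on which they sit gives an unordered partition $[I_1,\ldots,I_r]$ of $\{1,\ldots,n\}$, the $r$ attaching points define a point of $\ol{M}_r(C)$, and the $j$-th bubble tree with its markings and scaling is a point of $\ol{M}_{|I_j|,1}(\bA)$; this is the homeomorphism $\varphi_{[I_1,\ldots,I_r]}$ of item (b). I would import the Ma'u--Woodward local model (Proposition \ref{MAloc2}) for each affine factor: the balanced relations \eqref{balanced2} on the gluing parameters are exactly absorbed into the independent normalizations already quotiented inside the factors $\ol{M}_{|I_j|,1}(\bA)$, so each $D_{[I_1,\ldots,I_r]}$ is irreducible of the asserted product form. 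The dimension bookkeeping $\dim \ol{M}_r(C) + \sum_{j=1}^r \dim \ol{M}_{|I_j|,1}(\bA) = r + \sum_{j=1}^r (|I_j| - 1) = n = \dim \ol{M}_{n,1}(C) - 1$ confirms codimension one, as do $\dim D_I = (|I|-2)+(n-|I|+2) = n$ and $\dim \iota_\rho(\ol{M}_n(C)) = n$ for items (a) and (c).

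Finally I would check exhaustion: any stratum not of these three forms has codimension at least two. Inside $s^{-1}(\C) \cong \ol{M}_n(C) \times \C$ this follows from the boundary description of $\ol{M}_n(C)$ together with the single normal direction $\rho$; inside $s^{-1}(\infty)$ it follows from the local model, since a second node, or an internal degeneration of some affine factor $\ol{M}_{|I_j|,1}(\bA)$, or a collision of attaching points on $C$, each raises $\codim$ by one. The main obstacle is the transition at $s^{-1}(\infty)$: making rigorous the bubbling-off of affine scaled curves as $\rho \to \infty$ and verifying that the balanced relations \eqref{balanced2} are matched term-by-term by the independent rescalings of the affine bubbles, so that the decomposition of item (b) holds as a homeomorphism and $D_{[I_1,\ldots,I_r]}$ is genuinely codimension one. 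This rests on transporting the local model of Proposition \ref{MAloc2} to the parametrized setting, i.e. on the tubular neighborhood statement cited at the outset.
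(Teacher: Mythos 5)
Your proposal is correct and follows essentially the same route as the paper, whose proof simply invokes the tubular neighborhood description of each stratum (the local model $M_{n,1,\Gamma}(C)\times Z_\Gamma$ with balanced gluing parameters) together with the codimension count of Remark \ref{dimcount}. Your scaling map $s\colon \ol{M}_{n,1}(C)\to\P$ is the same organizing device the paper itself uses in the proof of Proposition \ref{rhoequiv}, so you have merely made explicit the case analysis and dimension bookkeeping that the paper declares immediate.
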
 

\begin{proof}   The description of boundary subspaces is immediate from 
the tubular neighborhood description of each stratum and a dimension
count, which is the same as in Remark \ref{dimcount}. 
\end{proof} 

The adiabatic limit Theorem \ref{largearea} will be deduced from the
following divisor class relation:

\begin{proposition}  \label{rhoequiv} {\rm (The basic divisor class relation in the moduli of stable
scaled curves)} The homology class of $\iota_\rho(\ovl{M}_n(C))$ is
  equal to that of the union of classes of $D_{[I_1,\ldots,I_r]}$ over
  unordered partitions.
\end{proposition}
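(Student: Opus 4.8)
The plan is to realize both sides of the asserted relation as fibers of a single morphism to $\P$ and then invoke rational equivalence of fibers. On $\ol{M}_{n,1}(C)$ there is a natural \emph{scaling morphism}
$$ s: \ol{M}_{n,1}(C) \to \P, \qquad [(\hat{C},u,\lambda)] \mapsto \lambda|_{C_0}, $$
recording the value of the scaling on the principal component $C_0 \cong C$. This is well defined: on $C_0$ the relative dualizing sheaf $T_u^\dual = T^\dual C_0 \otimes (u^* T^\dual C)^{-1}$ is trivial, so the section $\lambda$ of $\P(T_u^\dual \oplus \C)$ restricts to a constant $\rho \in \P$; that $s$ varies algebraically follows from the local gluing model (the analog of Proposition \ref{MAloc2}), in which the principal scaling is a monomial in the gluing parameters.

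First I would identify the two relevant fibers. For finite $\rho \in \C$ the monotonicity clause of Definition \ref{scaledcurves} forces every bubble component to carry zero scaling once the principal scaling is finite; hence $s^{-1}(\rho) = \iota_\rho(\ol{M}_n(C))$, and in particular $s^{-1}(0) = \iota_0(\ol{M}_n(C))$ with $\rho$ itself serving as a transverse coordinate, so this zero fiber is reduced. Over $\rho = \infty$ the principal component acquires infinite scaling, the attached bubble trees become affine scaled curves, and the homeomorphism \eqref{homeo2} identifies the fiber set-theoretically with $\bigcup_{[I_1,\ldots,I_r]} D_{[I_1,\ldots,I_r]}$.

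Next I would run the divisor-of-a-rational-function argument, exactly parallel to the proof of the $\ol{M}_{2,1}(\bA)$ relation recalled in Remark \ref{Drem}(d). Since $\ol{M}_{n,1}(C)$ is irreducible, the principal divisor $\on{div}(s)$ vanishes in homology, so its zero divisor $s^{-1}(0) = \iota_0(\ol{M}_n(C))$ is homologous to its polar divisor $s^{-1}(\infty) = \sum_{[I_1,\ldots,I_r]} m_{[I_1,\ldots,I_r]} D_{[I_1,\ldots,I_r]}$. As the finite fibers $\iota_\rho(\ol{M}_n(C))$ are all rationally equivalent (translating $\rho$ along $\C$), this gives $[\iota_\rho(\ol{M}_n(C))] = \sum_{[I_1,\ldots,I_r]} m_{[I_1,\ldots,I_r]} [D_{[I_1,\ldots,I_r]}]$.

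The hard part will be the multiplicity computation, namely that each $m_{[I_1,\ldots,I_r]} = 1$, which is what upgrades the relation to the clean ``union of classes'' asserted. For this I would pass to the local toric model near a generic point of $D_{[I_1,\ldots,I_r]}$, whose combinatorial type $\Gamma$ is a root vertex in $\Ve^\infty(\Gamma)$ with $r$ colored bubbles attached by edges with gluing parameters $\gamma_1,\ldots,\gamma_r$. The balanced condition \eqref{balanced2}, applied to the $r$ colored vertices, forces $\gamma_1 = \cdots = \gamma_r$, so the transverse slice $Z_\Gamma$ is one-dimensional with a single coordinate $\gamma$; this is consistent with $D_{[I_1,\ldots,I_r]}$ being a divisor. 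Since each gluing map $z \mapsto \gamma/z$ rescales the one-form across its node to the first power, the principal scaling is $s \sim \gamma^{-1}$, giving a simple pole at $\gamma = 0$ and hence $m_{[I_1,\ldots,I_r]} = 1$. The main care needed is precisely this bookkeeping: checking that the $r$ nodes contribute, after the $r-1$ balancing relations, a single normal parameter entering $s$ linearly, in exact analogy with the order-one vanishing recorded in Remark \ref{Drem}(d).
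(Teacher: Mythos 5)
Your proposal is correct and follows essentially the same route as the paper: the paper's proof likewise considers the scaling map $\rho: \ol{M}_{n,1}(C) \to \P$, identifies $\rho^{-1}(0) \cong \ol{M}_n(C)$ and $\rho^{-1}(\infty) = \cup_{r,[I_1,\ldots,I_r]} D_{[I_1,\ldots,I_r]}$, and concludes via linear equivalence of divisors implying homology equivalence, asserting that each multiplicity on the right-hand side is $1$. The only difference is that the paper leaves the multiplicity-one check as an assertion (``as one can check''), whereas you carry it out explicitly through the local toric model with the balanced condition \eqref{balanced2} forcing the gluing parameters equal and the scaling to have a first-order pole in the transverse coordinate, which is exactly the mechanism used in the paper's analogous computation for $\ol{M}_{2,1}(\bA)$ in Remark \ref{Drem}.
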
 

\begin{proof}  
The equivalence in homology induced by the map $\rho: \ovl{M}_{n,1}(C)
\to \P$ equates the homology classes of $\rho^{-1}(0) \cong
\ovl{M}_n(C)$ with $\rho^{-1}(\infty)= \cup_{r,[I_1,\ldots,I_r]}
D_{[I_1,\ldots,I_r]}$, as one can check that the multiplicity of each
divisor on the right hand side is $1$, using the fact that linear
equivalence of divisors implies homology equivalence.
\end{proof}

Suppose that $V,W$ are (even, genus zero) CohFT algebras, with
structure maps
$$\mu_V^n: V^n \times H(\ovl{M}_{0,n+1}) \to V, \ \ \ \mu_W^n: W^n
\times H(\ovl{M}_{0,n+1}) \to W .$$
Let $\phi^n: V^n \times H(\ovl{M}_{n,1}(\bA)) \to V$ be a morphism of
CohFT algebras, and $\tau_V,\tau_W$ traces on $V,W$ respectively.

\begin{definition} \label{2morphism}   {\rm ($2$-morphisms for compositions of traces
and morphisms)}  
A {\em $2$-morphism from $\phi \circ \tau_W$ to $\tau_V$} is a
collection of maps
$$ \psi: V^n \times H(\ovl{M}_{n,1}(C)) \to W, \quad n \ge 0  $$
such that
\begin{enumerate} 
\item {\rm (Fixing Scaling)} if $\gamma \in H^2(\ovl{M}_{n,1}(C))$
  is the dual class to $\iota_\rho(\ovl{M}_n(C))$ then
$$
  \psi^n( \alpha_1,\ldots, \alpha_n; \beta \cup  \gamma ) =
  \tau_V^n( \alpha_1,\ldots, \alpha_n; \iota_\rho^* \beta) ;
$$
\item {\rm (Bubbling points)}  if $\gamma_{I} \subset H^2(\ovl{M}_{n,1}(C))$
is the dual class to the divisor $D_{I}$ corresponding to bubbling 
off markings $z_i, i \in I$, then 
$$ \psi^n( \alpha_1,\ldots, \alpha_n; \beta \cup \gamma_{I} ) =
\psi^{n - |I| + 1}( \alpha_j,j \notin I, \mu^{|I|} (\alpha_i, i \in I,
\cdot ) ; \cdot) (\iota_{I,1}^* \beta) ;$$
\item {\rm (Blowing up scaling)} if $D = \sum n_{[I_1,\ldots,I_r]} D_{[I_1,\ldots,I_r]}$ is
  a boundary divisor with dual class $\gamma$ then
\begin{multline} 
\psi^n( \alpha_1,\ldots, \alpha_n; \beta \cup \gamma ) = 
  \sum_{[I_1, \ldots,I_r]} n_{[I_1,\ldots,I_r]}  \tau^r_W( \phi^{|I_1|}(\alpha_i, i \in
  I_1;\cdot),\ldots, \\ \phi^{|I_r|}(\alpha_i, i \in I_r; \cdot); \cdot)
  ( \iota_{[I_1,\ldots,I_r]}^* \beta ) .
\end{multline}
\end{enumerate} 
We write $\tau_V \cong_\psi \tau_W \circ \phi$.  
\end{definition} \vskip .1in 

\begin{lemma} If $\tau_V \cong_\psi \tau_W \circ \phi$ then 
$$ 
 \tau_V( \alpha_1,\ldots, \alpha_n; \iota_\rho^* \beta) = 
  \sum_{r,[I_1, \ldots,I_r]} \tau^r_W( \phi^{|I_1|}(\alpha_i, i \in
  I_1;\cdot),\ldots, \phi^{|I_r|}(\alpha_i, i \in I_r; \cdot); \cdot)
  ( \iota_{[I_1,\ldots,I_r]}^* \beta )  $$
\end{lemma}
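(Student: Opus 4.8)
The plan is to combine the basic divisor class relation of Proposition \ref{rhoequiv} with the two relevant defining axioms of a $2$-morphism from Definition \ref{2morphism}, namely the Fixing Scaling axiom (a) and the Blowing up scaling axiom (c). The only substantive geometric input is the homology relation
$$ [\iota_\rho(\ol{M}_n(C))] = \sum_{r,[I_1,\ldots,I_r]} [D_{[I_1,\ldots,I_r]}], $$
in which each unordered partition appears with multiplicity one; everything else is bookkeeping with the axioms.

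First I would record that the dual class of a cycle in $\ol{M}_{n,1}(C)$, when it exists, depends only on the homology class of that cycle, since it is defined purely through the pairing $\langle \beta, [Z] \rangle = \langle \beta \cup \gamma, [\ol{M}_{n,1}(C)] \rangle$. The class $\gamma_\rho$ dual to $\iota_\rho(\ol{M}_n(C))$ exists by hypothesis, as it is the class used in the Fixing Scaling axiom. By Proposition \ref{rhoequiv} this cycle is homologous to $D := \sum_{r,[I_1,\ldots,I_r]} D_{[I_1,\ldots,I_r]}$, the sum over all unordered partitions taken with every multiplicity $n_{[I_1,\ldots,I_r]} = 1$. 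Hence $\gamma_\rho$ is also a dual class for $D$. This is the crucial point: although an arbitrary combination of boundary divisors of $\ol{M}_{n,1}(C)$ need not admit a dual class, this particular combination does, precisely because it is homologous to the manifestly well-behaved cycle $\iota_\rho(\ol{M}_n(C))$.

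With this identification in hand the lemma is immediate. Applying the Fixing Scaling axiom to $\gamma_\rho$ gives
$$ \tau_V(\alpha_1,\ldots,\alpha_n; \iota_\rho^* \beta) = \psi^n(\alpha_1,\ldots,\alpha_n; \beta \cup \gamma_\rho). $$
Since $\gamma_\rho$ is the dual class of $D$, I then invoke the Blowing up scaling axiom for the divisor $D$ with every $n_{[I_1,\ldots,I_r]} = 1$, which expands $\psi^n(\alpha; \beta \cup \gamma_\rho)$ as the sum over all unordered partitions
$$ \sum_{r,[I_1,\ldots,I_r]} \tau^r_W\big( \phi^{|I_1|}(\alpha_i, i \in I_1; \cdot),\ldots, \phi^{|I_r|}(\alpha_i, i \in I_r; \cdot); \cdot \big)(\iota_{[I_1,\ldots,I_r]}^* \beta). $$
Chaining the two equalities yields the claimed formula. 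The main obstacle to watch is exactly the dual-class subtlety flagged above: one must verify that all multiplicities in Proposition \ref{rhoequiv} are genuinely one, so that $D$ is the ``plain'' sum of the partition divisors and the Blowing up scaling axiom applies verbatim with unit coefficients, rather than some nontrivial weighting that would alter the coefficients appearing in the final sum.
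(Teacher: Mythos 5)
Your proposal is correct and follows essentially the same route as the paper's own (very terse) proof: combining axioms (a) and (c) of the $2$-morphism definition with the homology equality $[\iota_\rho(\ol{M}_n(C))] = \sum_{r,[I_1,\ldots,I_r]} [D_{[I_1,\ldots,I_r]}]$ of Proposition \ref{rhoequiv}. Your added care about the dual class depending only on the homology class, and the unit multiplicities, simply makes explicit what the paper leaves implicit (the multiplicity-one check is already recorded in the proof of Proposition \ref{rhoequiv}).
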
 

\begin{proof}   The proof follows 
by combining (a) and (c) in the definition.  These are applied to
using the equality of the homology class $[D_\rho]$ of the divisor
$D_\rho$ corresponding to fixed scaling and the divisor at infinite
scaling $\sum [D_{[I_1,\ldots,I_r]}]$.
\end{proof}

%\begin{lemma} 
%If $g_V,g_W$ are the metrics on CohFT algebras $V,W$ defined by traces
%$\tau_V,\tau_W$ and there is a $2$-morphism $\tau_V \cong_\psi \tau_W
%\circ \phi$ then $\phi : V \to W$ is an isometry in the sense that
%%
%$ g_V(v_1,v_2) = g_W(D_v \phi (v_1), D_v \phi(v_2)) .$ \end{lemma} %

%\begin{proof} We restrict to the case $\phi^0 = 0$ for simplicity.  Since $\tau_V \cong_\psi \tau_W \circ \phi$% we have 
%% 
%\begin{eqnarray*} g_V(v_1,v_2) &=& 
%\sum_{n \ge 0} \frac{1}{n!}  \tau_V^{n+2}(v_1,v_2,v,\ldots, v;
%\gamma_1 \cup \gamma_2) \\ &=& \sum_{n \ge 0, i_1,\ldots, i_r}
%((i_1-1)!(i_2 - 1)!i_3! \ldots i_r! (r-2)!)^{-1}
%\tau_W^{r}(\phi^{i_1}(v_1, v,\ldots, v), \\ &&
%\ \phi^{i_2}(v_2,v,\ldots, v), \phi^{i_3}(v,\ldots, v), \ldots,
%\phi^{i_r}(v,\ldots, v); \gamma_1 \cup \gamma_2) \\ &=& g_W(D_v \phi
%(v_1), D_v \phi(v_2)).
%\end{eqnarray*}
%\end{proof} 

 \section{Symplectic vortices}
\label{vor}

In physics, a {\em vortex} refers to a stable solution of classical
field equations which has finite energy in two spatial dimensions, see
for example Preskill \cite{pres:vm} and, for a more mathematical
treatment, Jaffe-Taubes \cite{jt:vm} who classified vortices for
scalar fields.  In mathematics, vortices often refer to pairs of a
connection and section of a line bundle satisfying an equation
involving the curvature and a quadratic function of the section, see
for example Bradlow \cite{brad:vor}.  Symplectic vortices are vortices
in which the ``field'' takes values in a symplectic manifold with
Hamiltonian group action.  In this section we review the symplectic
approach to gauged Gromov-Witten invariants, also known as symplectic
vortex invariants or Hamiltonian gauged Gromov-Witten invariants, as
introduced by Mundet and Salamon, see \cite{ci:symvortex},
\cite{mun:ham}.  If the moduli spaces of symplectic vortices are
smooth, then integration over them defines the required invariants and
the proofs of the Theorems \ref{large}, \ref{largearea} are immediate.
Of course, the moduli spaces are not smooth, or of expected dimension
in general, and to define the needed virtual fundamental cycles we
pass to algebraic geometry, starting in the following section.

\subsection{Gauged holomorphic maps}

In this section we review the construction of the moduli space of
symplectic vortices as the symplectic quotient of the action of the
group of gauge transformations on the space of gauged holomorphic
maps; this generalizes the construction of the space of flat
connections as the symplectic quotient of the action of the group of
gauge transformations on the space of connections on a bundle.
Unfortunately since the space of gauged holomorphic maps is in general
singular, this construction is rather formal and serves only for
motivation for what follows.

We begin with notation for equivariant cohomology.  Let $K$ be a
compact group with Lie algebra $\k$.  We assume that $\k$ is equipped
with a $K$-invariant metric, inducing an identification $\k^\dual \to
\k$.  We denote by $EK \to BK$ a universal $K$-bundle, unique up to
homotopy equivalence.  For any $K$-space $X$, we denote by $X_K := X
\times_K EK$ the homotopy quotient and by $H^K(X) := H(X_K)$ the
equivariant cohomology.

Next we introduce connections and curvature. Let $P \to C$ be a
principal $K$-bundle over a compact surface $C$, and $\psi: C \to BK$
a classifying map for $P$.  Denote by
$$ \Omega(P,\k)^K = \{ \theta \in \Omega(P,\k) \ |\  (k^{-1})^* \theta = \Ad(k) \theta \} $$
the space of $K$-invariant forms and by
$$\A(P) = \{ \theta \in \Omega^1(P,\k)^K \ |
\ \theta_{\pi(p)}(\xi_P(p)) = \xi, \forall \xi \in \k,  p \in P \} $$
the space of (principal) connections on $P$, where $\xi_P(p) = \ddt
|_{t =0} p \exp( t \xi)$ is the generating vector field at $p$.  The
space $\A(P)$ is an affine space with a free transitive action of the
space $\Omega^1(C,P(\k))$ of one-forms with values in the {\em adjoint
  bundle} $ P(\k) = P \times_K \k .$ Let
$$ \A(P) \to \Omega^2(C,P(\k)), \quad A \mapsto F_A $$
denote the curvature map.  

For Hamiltonian actions we introduce a notation of a gauged map, which
will mean a pair of a connection and a section of the associated
bundle.  Let $X$ be a Hamiltonian $K$-manifold with symplectic form
$\omega$ and moment map $\Phi:X \to \k^\dual$.  By our convention this
means that for all $\xi \in \k$, we have $\omega(\xi_X, \cdot) = - \d
( \Phi, \xi)$ where $\xi_X(x) = \ddt |_{t =0 } \exp( - t \xi) x$ is
the generating vector field for $\xi$.  Recall that $P(X)$ is the
associated $X$-bundle.  Sections $u: C \to P(X)$ are in one-to-one
correspondence with lifts $u_K$ of $\psi$ to $X_K$.  Given a section
$u: C \to P(X)$, the homology class $[u]$ is defined to be the
homology class $[u] := u_{K,*} [C] \in H_2^K(X,\Z).$ The equivariant
symplectic form $\omega_K \in \Omega^2_K(X)$ pulls back to
$\Omega^2_K(P \times X)$ and descends to a closed, fiber-wise
symplectic two-form $\omega_{P(X),A} \in \Omega^2(P(X))$ depending on
the choice of connection $A$.  Its cohomology class $[\omega_{P(X)}]
\in H^2(P(X))$ is independent of the choice of connection.  The
\emph{equivariant symplectic area} of $u$ is
$$ D(u) = ([u], [\omega_K]) = ([C], u^* [\omega_{P(X)}] ) $$ 
where $( \ , \ )$ denotes the pairing between homology and cohomology.
More concretely, we have $ D(u) = \int_C u^* \omega_{P(X),A} $
independently of the connection $A$.  We denote by
$P(\Phi): P(X) \to P(\k^\dual) \cong P(\k) $
the map induced by $\Phi$.  A {\em gauged map} from $C$ to $X$ is a
datum $(P,A,u)$ where $A \in \A(P)$ and $u: C \to P(X)$ is a section.
Given a metric on $C$, we denote by 
$$ *: \Omega^\bullet(C) \to \Omega^{2 - \bullet}(C) $$
the associated Hodge star.  The {\em energy} of a gauged map $(A,u)$
is given by
\begin{equation} \label{energy} E(A,u) = \hh \int_C  * \left( \Vert \d_A u \Vert^2 + \Vert F_A\Vert^2 + \Vert u^*
 P(\Phi)\Vert^2 \right) .\end{equation}

Finally we introduce gauged holomorphic maps and symplectic vortices.
Suppose the $C$ is a complex curve.
%
%\begin{enumerate}
%\item 
Denote by $\J(X)$ the space of almost complex structures on $X$
  compatible with $\omega$. 
%\item 
The action of $K$ induces an action on $\J(X)$, and we denote by
  $\J(X)^K$ the invariant subspace.  
%\item 
Any connection $A \in \A(P)$ induces a map of spaces of almost
  complex structures
$$ \J(X)^K \to \J(P(X)), \ \ J \mapsto J_A$$
by combining the almost complex structure on $X$ and $C$ using
the splitting defined by the connection.  
%
%\item 
Let $\Gamma(C,P(X))$ denote the space of sections of $P(X)$. 
%\item 
We denote by
$$ \olp_A : \Gamma(C,P(X)) \to \bigcup_{u \in \Gamma(C,P(X))}
\Omega^{0,1}(C,u^*T^{\on{vert}} P(X)) $$
the Cauchy-Riemann operator defined by $J_A$.  
%\end{enumerate} 

\begin{definition} {\rm (Gauged holomorphic maps)} 
Suppose that $X$ is a Hamiltonian $K$-manifold equipped with an
invariant almost complex structure $J$, $C$ is a complex curve, and $P
\to C$ is a principal $K$-bundle.  A {\em gauged holomorphic map} for
$P$ is a pair $(A,u)$ satisfying $\olp_A u = 0 $.\end{definition} \vskip .1in

Let $\H(P,X)$ be the space of gauged holomorphic maps for $P$:
$$ \H(P,X) = \{ (A,u) \in \A(P) \times \Gamma(C,P(X)), \ \ \olp_A u
= 0 \} .$$
The energy and symplectic area are related by
a generalization of the familiar energy-area relation for
pseudoholomorphic maps in \cite{ms:jh}:

\begin{proposition} {\rm (Energy-area relation, \cite[Proposition 3.1]{ci:symvortex})}
Suppose that $X$ is a Hamiltonian $K$-manifold equipped with an
invariant almost complex structure $J$, $C$ is a compact complex
curve, and $P \to C$ is a principal $K$-bundle.  Let $(A,u)$ be a
gauged map from $C$ to $X$ with bundle $P$.  Let $\omega_C$ be the
area form determined by a choice of metric on $C$. The energy and
equivariant symplectic area are related by
\begin{equation}  \label{energyaction} 
E(A,u) = D(u) + \int_C * \Vert \olp_A u \Vert^2 + * \hh \Vert F_A + u^* P(\Phi) \omega_C
\Vert^2.
\end{equation}
\end{proposition}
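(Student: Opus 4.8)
The plan is to prove the identity pointwise on $C$ and then integrate, via the standard ``completing the square'' (Bogomolny) argument for symplectic vortices. First I would decompose the covariant derivative $\d_A u$, regarded as a section of $T^\dual C \otimes u^* T^{\on{vert}} P(X)$, into its $J_A$-linear and $J_A$-antilinear parts $\d_A u = \partial_A u + \olp_A u$, so that the antilinear part is exactly the Cauchy--Riemann term $\olp_A u$ in the statement. The first step is then the pointwise \emph{energy identity} for the derivative term: since $J_A$ is compatible with the fiberwise symplectic form underlying $\omega_{P(X),A}$, the computation giving $\hh \Vert \d u \Vert^2 \omega_C = u^* \omega + \Vert \olp u \Vert^2 \omega_C$ for ordinary pseudoholomorphic maps (see \cite{ms:jh}) adapts to the gauged setting and yields
$$ \hh \Vert \d_A u \Vert^2 \, \omega_C = u^* \omega_{P(X),A} + \Vert \olp_A u \Vert^2 \, \omega_C + (u^* P(\Phi), F_A) . $$
The extra term $(u^* P(\Phi), F_A)$ is the moment-map coupling to the curvature: it records the difference between the full minimal-coupling form $\omega_{P(X),A}$, whose pullback computes $D(u)$, and the purely vertical part of the energy density. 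To pin it down, and in particular to fix its sign, I would unwind the construction of $\omega_{P(X),A}$ from $\omega_K$ and $A$ in a local trivialization, using the convention $\omega(\xi_X, \cdot) = -\d (\Phi, \xi)$.

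The second step is to complete the square on the two remaining terms of the energy density \eqref{energy}. Because $\omega_C$ is the area form one has $\Vert u^* P(\Phi) \, \omega_C \Vert^2 = \Vert u^* P(\Phi) \Vert^2$ pointwise, whence
$$ \hh \Vert F_A \Vert^2 + \hh \Vert u^* P(\Phi) \Vert^2 = \hh \Vert F_A + u^* P(\Phi) \omega_C \Vert^2 - (F_A, u^* P(\Phi) \omega_C) . $$
Applying $*$ (so that $* f = f \omega_C$ for a function $f$), the cross term on the right equals $(u^* P(\Phi), F_A)$ as a two-form, which is exactly the negative of the coupling term in the derivative identity; the two therefore cancel when the displays are summed. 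Integrating the resulting density over the compact curve $C$ then gives
$$ E(A,u) = \int_C u^* \omega_{P(X),A} + \int_C * \Vert \olp_A u \Vert^2 + * \hh \Vert F_A + u^* P(\Phi) \omega_C \Vert^2 , $$
and the first integral is $D(u)$ by definition.

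The main obstacle is the first step: correctly identifying the coupling term $(u^* P(\Phi), F_A)$, together with its sign, in the derivative identity, and matching it against the cross term produced by completing the square. Once the two cross terms are seen to cancel, the remainder is the elementary algebra above. The connection-independence of $D(u)$ recorded earlier provides a useful consistency check, since this coupling term is the only place in which the connection $A$ enters the equivariant symplectic area.
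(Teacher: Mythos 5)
Your argument is correct and coincides with the proof of \cite[Proposition 3.1]{ci:symvortex}, which is exactly what the paper cites in place of giving its own proof: the pointwise identity for $\d_A u = \partial_A u + \olp_A u$ produces the coupling term $(u^* P(\Phi), F_A)$, this cancels the cross term from completing the square on $\hh \Vert F_A \Vert^2 + \hh \Vert u^* P(\Phi) \Vert^2$, and the leftover $\int_C u^* \omega_{P(X),A}$ is $D(u)$ by the connection-independence recorded just before the proposition. The sign you flag does come out as you predict under the paper's conventions $\xi_X(x) = \ddt|_{t=0} \exp(-t\xi)x$ and $\omega(\xi_X, \cdot) = -\d(\Phi,\xi)$.
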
 

\begin{definition}  \label{vortexdef}
A gauged map $(A,u) \in \H(P,X)$ is a {\em symplectic vortex} if it
satisfies the
$$ {\rm (Vortex\ Equation)} \quad F_{A,u} := F_A +  u^* P(\Phi)
\Vol_{C} = 0 .$$
An {\em isomorphism} of symplectic vortices $(A_j,u_j), j =0,1$ with
bundle $P$ is a gauge transformation $\phi: P \to P$ such that $\phi^*
A_1 = A_0$ and $ \phi(X) \circ u_0 = u_1$, where $\phi(X): P(X) \to
P(X)$ is the fiber-bundle-automorphism induced by $\phi$.  An {\em
  $n$-marked} symplectic vortex is a vortex $(A,u)$ together with an
$n$-tuple $\ul{z} = (z_1,\ldots, z_n)$ of distinct points in $C$.  A
{\em framed vortex} is a collection $(A,u,\ul{z},\ul{\phi})$, where
$(A,u,\ul{z})$ is a marked vortex and $\ul{\phi} =
(\phi_1,\ldots,\phi_n)$ is an $n$-tuple where each $\phi_j: P_{z_j}
\to K$ is a $K$-equivariant isomorphism, that is, a trivialization of
the fiber. \end{definition}

\vskip .1in
Let $M_n(P,X)$ denote the moduli space of isomorphism classes of
$n$-marked vortices and $ M_n^K(C,X)$ the union over
isomorphism classes of bundles $P \to C$.  The moduli space
$M_n^K(C,X)$ is homeomorphic to the product $M^K(C,X) \times
M_n(C) $ where $M_n(C)$ denotes the configuration space of $n$-tuples
of distinct points in $C$.

\subsection{Nodal symplectic vortices} 

The bubbling phenomenon for holomorphic curves also occurs in the case
of symplectic vortices and prevents compactness of the moduli spaces.
However, once one incorporates bubbles and fixes the homology class
the moduli spaces become compact, as we now explain following Salamon
et al \cite{ci:symvortex} who proved compactness of the moduli space
of vortices of fixed homology class in the case that $X$ has no
holomorphic spheres, and Mundet \cite{mun:ham} and Ott
\cite{ott:remov} who compactify the moduli space of vortices by
allowing bubbling in the fibers of $P(X)$.

\begin{definition} \label{nodal} {\rm (Nodal vortices)} 
Suppose that $X$ is a Hamiltonian $K$-manifold equipped with an
invariant almost complex structure $J$ and $C$ is a connected smooth
projective curve.  A {\em nodal gauged $n$-marked map} from $C$ to $X$
with underlying bundle $P$ consists of a datum
$(P,A,\hat{C},{u},\ul{z})$ where
\begin{enumerate}
\item {\rm (Bundle and Connection)} $P \to C$ is a $K$-bundle and $A$ is a
  connection on $P$;
\item {\rm (Nodal section)} $u: \hat{C} \to P(X)$ is a map from nodal
  curve $\hat{C}$ to $P(X)$ of base degree one, that is, $\pi \circ u:
  \hat{C} \to C$ is a map of class $[C]$;
\item {\rm (Markings)} an $n$-tuple $\ul{z} = (z_1,\ldots,z_n) \in
  \hat{C}^n$ of distinct, smooth points of $\hat{C}$.
\end{enumerate}
An {\em isomorphism} of nodal gauged maps
$(\hat{C}',A',{u}',\ul{z}'),(\hat{C}'',A'',{u}'',\ul{z}'')$ with
bundles $P',P''$ consists of
\begin{enumerate} 
\item {\rm (Domain automorphism)} an automorphism of the domain $\psi: \hat{C}' \to \hat{C}''$,
inducing the identity on $C$, and 
\item {\rm (Bundle automorphism)}  a bundle isomorphism $\phi: P' \to P''$, inducing the identity
  on $C$,
\end{enumerate} 
intertwining the connections and maps and exchanging the markings,
that is,
\begin{enumerate} 
\item {\rm (Isomorphism of connections)} $\phi^* A'' = A'$, 
\item {\rm (Isomorphism of maps)} ${u}'' = \phi(X) \circ {u}' \circ
  \psi^{-1} $ where $\phi(X): P(X) \to P'(X)$ is the map induced by
  $\phi$ and
\item {\rm (Isomorphism of markings)}  $\psi(z_i') = z_i'', i =1,\ldots, n$.  
\end{enumerate} 
A {\em nodal vortex} is a nodal gauged map such that the principal
component is a vortex and there are no automorphisms with trivial
bundle automorphism (gauge transformation).  A nodal vortex is {\em
  stable} if the map $u: \hat{C} \to P(X)$ is a stable map from the
marked curve $(\hat{C},\ul{z})$ and the pair $(A,u)$ has finite
automorphism group under the action of gauge transformations.  Note
that there is no condition on the number of special points on the
principal component. In particular, constant gauged maps with no
markings can be stable.  For any map ${u}:\hat{C} \to P(X)$, the {\em
  homology class} $[u] \in H_2^K(X,\Z)$ of ${u}$ is the push-forward
of $[\hat{C}]$ under a map $u_K: \hat{C} \to X_K$ obtained from a
classifying map for $P$.
\end{definition} \vskip .1in  

The following extends the notion of convergence to the case of nodal
marked vortices.

\begin{definition}  \label{vortexconverge}
{\rm (Convergence of nodal vortices)} Suppose that $X$ is a
Hamiltonian $K$-manifold equipped with an invariant almost complex
structure $J$ and $C$ is a connected smooth projective curve.  Suppose
that $(P_\nu,A_\nu,\hat{C}_\nu,u_\nu,\ul{z}_\nu)$ is a sequence of
marked nodal vortices on $C$ with values in $X$, and
$(P,A,\hat{C},{u},\ul{z})$ is a nodal vortex with values in $X$.  We
say that $[(P_\nu,A_\nu,\hat{C}_\nu,A_\nu,u_\nu,\ul{z}_\nu)]$ {\em
  converges} to $[(P,A,\hat{C},A,\ul{z})]$ if after a sequence of
bundle isomorphism $\phi_\nu: P_\nu \to P$ the connections $A_\nu$
converge to $A_\infty$ in the $C^0$ topology and $[(\hat{C}_\nu,
u_\nu,\ul{z}_\nu)]$ Gromov converges to $[(\hat{C},u,\ul{z})]$.
\end{definition} \vskip .1in

See Ott \cite{ott:remov} for more details on the definition of
convergence.  The definition of convergence implies in particular that
the curvature $F_{A_\nu}$ converges to $F_{A_\infty}$ in $L^2$, but
$A_\nu$ is not required to (and does not) converge to $A$ uniformly in
all derivatives.  Recall from \cite{ci:symvortex} a condition which
guarantees compactness of moduli spaces of symplectic vortices with
non-compact target:

\begin{definition}  A Hamiltonian
$K$-manifold $X$ with moment map $\Phi: X \to \k^\dual$ is {\em convex
    at infinity} if there exists $f \in C^\infty(X)^G$ such that
\begin{equation} \label{convex}  
{\rm (Convexity)} \ \ 
\langle \nabla_v \nabla f(x), v
\rangle + 
\langle \nabla_{Jv} \nabla f(x), Jv
\rangle
 \ge 0, \quad \d f(x) J_x \Phi(x)_X (x) \ge 0 \end{equation}
for every $x \in X$ and $v \in T_x X$ outside of a compact subset of
$X$.  
\end{definition} \vskip .1in 

For example, if $X$ is a vector space with a linear Hamiltonian action
of $K$ with {\em proper} moment map $\Phi$ then $X$ is convex.  The
following is proved by Mundet \cite{mun:ham} and Ott \cite{ott:remov}.

\begin{theorem}
\label{compactnessforvortices} {\rm (Sequential compactness
for vortices with compact domain)} Suppose that $X$ is Hamiltonian
$K$-manifold equipped with a proper moment map convex at infinity and
an invariant almost complex structure $J$, $C$ is a connected smooth
compact complex curve.  Any sequence of nodal symplectic vortices with
bounded energy has a convergent subsequence.
\end{theorem}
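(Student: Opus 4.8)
The plan is to adapt the Gromov compactness scheme for pseudoholomorphic curves to the gauged setting, following the structure of Mundet \cite{mun:ham} and Ott \cite{ott:remov}. The organizing observation is that for a vortex the equations $\olp_A u = 0$ and $F_A + u^* P(\Phi)\Vol_C = 0$ (Definition \ref{vortexdef}) collapse the energy-area relation \eqref{energyaction} to $E(A,u) = D(u)$, so a uniform energy bound is the same as a uniform bound on the symplectic areas $D(u_\nu)$. Under the convexity and properness hypotheses this bounds the homology classes $[u_\nu] \in H_2^K(X,\Z)$ to finitely many values, and after passing to a subsequence I may assume $[u_\nu]$ is constant. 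The proof then splits into a bulk-convergence argument on the complement of a finite set of concentration points and a bubbling analysis at those points.

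Before any analysis I would use convexity at infinity to confine the images. Applying the convexity condition \eqref{convex} to the invariant function $f$ along the fiberwise holomorphic sections, a maximum-principle argument as in \cite[\S3]{ci:symvortex} shows that $f \circ u_\nu$ admits no large interior maximum, since the two inequalities in \eqref{convex} make it (weakly) subharmonic outside a compact set. Hence there is a compact $X' \subset X$, independent of $\nu$, with $u_\nu(\hat{C}_\nu) \subset P(X')$. This is what tames the noncompact target, and it gives at once a uniform pointwise bound $\Vert u_\nu^* P(\Phi)\Vert \le c$, whence by the vortex equation a uniform curvature bound $\Vert F_{A_\nu}\Vert \le c$.

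With the curvature controlled I would fix the gauge. On balls where the energy density stays below the Uhlenbeck threshold there exist gauge transformations placing $A_\nu$ in Coulomb gauge with uniform $W^{1,p}$ bounds; the coupled equation $\olp_{A_\nu} u_\nu = 0$ is then a uniformly elliptic system for the pair $(A_\nu,u_\nu)$, and elliptic bootstrapping yields $C^\infty_{\loc}$-convergence of a subsequence off the finite concentration set. Away from that set the connections converge in $C^0$ and the sections Gromov converge, which is exactly the data required by Definition \ref{vortexconverge}.

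The remaining and most delicate step is the bubbling analysis at the concentration points. Rescaling the domain around each such point, the connection term scales away faster than the section term, so the rescaled limits are ordinary $J$-holomorphic spheres in the fibers of $P(X)$, which one caps off by removal of singularities. The main obstacle is the \emph{no-energy-loss} statement: one must show that the total energy of the bubbles plus the energy of the bulk limit equals $\lim_\nu E(A_\nu,u_\nu)$, i.e. that no energy escapes into the necks. This is where the real work lies; it requires a uniform lower bound on the energy of any nonconstant fiber bubble, a long-neck/annulus decay estimate for the vortex energy in which the curvature contribution $\hh\Vert F_{A_\nu}\Vert^2$ is absorbed into the bulk term, and energy quantization for the spheres. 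Assembling the bulk limit together with the capped bubbles then produces a stable nodal vortex to which the subsequence converges in the sense of Definition \ref{vortexconverge}, completing the argument.
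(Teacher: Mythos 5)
Your proposal is correct and follows essentially the same route as the paper, which itself attributes the result to Mundet \cite{mun:ham} and Ott \cite{ott:remov}: confinement of the images via convexity at infinity as in \cite{ci:symvortex}, Uhlenbeck compactness plus Coulomb gauge and elliptic bootstrapping for the bulk limit, and soft rescaling combined with energy quantization, the annulus lemma, and removal of singularities (Theorem \ref{annlem}) to cap the fiber bubbles without energy loss. The one subtlety the paper flags that you leave implicit is that the rescaled almost complex structures $J_{A^1_\nu}$ converge only in $C^0$, not $C^2$, so standard Gromov compactness does not apply off the shelf; Ott's mean value inequality, which follows from the vortex equations, is precisely what substitutes for the missing $C^2$ convergence in the construction of the bubble limits.
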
 

Convergence for sequences with bounded first derivative is proved as
follows: Suppose that $(A_\nu,u_\nu)$ is a sequence of symplectic
vortices on a bundle $P$ with smooth domain with the property that $
c_\nu := \sup \Vert d_{A_\nu} u_\nu \Vert = \Vert\d_{A_\nu}
u_\nu(z_\nu)\Vert$ is bounded.  By Uhlenbeck compactness, after gauge
transformation, we may assume that $A_\nu$ converges weakly in the
Sobolev $W^{1,p}$ topology and strongly in $C^0$ to a limit
$A_\infty$.  After putting $A_\nu$ in Coulomb gauge with respect to
$A_\infty$, elliptic regularity for vortices (see
\cite[p.20]{ci:symvortex}) implies that $(u_\nu,A_\nu)$ has a
$C^l$-convergent subsequence of any $l$.  

More generally in the case of unbounded first derivative one has a
bubbling analysis similar to that for pseudoholomorphic curves
\cite{ci:symvortex}, \cite{ott:remov}: If $\Vert\d_{A_\nu}
u_\nu(z_\nu)\Vert \to \infty $ and $z_\nu \to z$ we say that $z$ is a
{\em bubble point} for the sequence $(A_\nu,u_\nu)$.
\begin{theorem} \label{annlem} 
{\rm (Bubbling analysis for vortices with compact domain)  }  Let $X$ be a 
Hamiltonian $K$-manifold with proper moment map convex at infinity.  
\begin{enumerate} 
\item {\rm (Removal of Singularities)} \cite[Theorem 1.1]{ott:remov} Any
  finite energy vortex $(A,u)$ on the punctured disk $ D - \{ 0 \}$
  extends to a vortex on $D$.
\item {\rm (Energy Quantization)} \cite[Lemma 4.2]{ott:remov} There
  exists an $E_0 > 0 $ such that for any bubble point of a sequence
  $(A_\nu,u_\nu)$, one has $$\lim_{\eps \to 0} \lim_{\nu \to \infty}
  E(A_\nu,u_\nu | B_{\eps}(z)) > E_0 .$$
\item {\rm (Annulus Lemma)} \cite[Section 5.2]{ott:remov} 
There exists
  a constant $\eps > 0$ such that if $(A,u)$ is a vortex on 
$  \AA(r,R) := B_R(0) - B_r(0) $
then for every $\mu < 1$ there exist constants $R_0, \delta_0,C > 0$
such that if $R > R_0$ and $E(A,u) < \delta_0$, then for $\log(2) \leq
T \leq \log(\sqrt{R/r})$ the restriction of $(A,u)$ to $\AA(e^T r,
e^{-T} R)$ satisfies
$$E ( (A, u) |_{\AA(e^T r, e^{-T}R}) < C e^{- 2 \mu T} E(A,u) .$$
\item {\rm (Mean Value Inequality)} \cite[Corollary 2.2]{ott:remov}
  Let $r> 0$ and let $\lambda(s,t) \d s \d t $ be an area form on the
  ball $B_r(0)$ of radius $r$ around $0$ in $\C$.  There exist
  constants $C,\delta > 0$ such that if $(A,u)$ is a vortex on
  $B_r(0)$ with $E(A,u | B_r(0)) < \delta$ then
$$ (1/2) \Vert\d_A u (0)\Vert^2 + \Vert \Phi(u(0)) \Vert^2 < (C/r^2)
  E( (A,u ) |_{ B_r(0)}) .$$
We remark that an examination of the proof in \cite{ott:remov} shows
that the constants $C,\delta$ depend only on bounds on $\lambda$ and
its first and second derivatives.
\end{enumerate} 
\end{theorem}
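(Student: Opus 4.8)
The plan is to follow the standard bubbling framework for pseudoholomorphic curves, as in McDuff--Salamon \cite{ms:jh}, adapted to the vortex equation via the energy-area relation \eqref{energyaction}; all four parts are due to Ott \cite{ott:remov}, and I would organize the argument around a single analytic principle. The point is that, away from the zero set of the derivative, the energy density of a vortex satisfies a Bochner-type differential inequality, so the whole analysis reduces to elliptic estimates for subsolutions of the Laplacian, together with the convexity condition \eqref{convex} to control the section near infinity in the non-compact target $X$. I would treat the four statements in the logical order (d), (a), (c), (b), since the later parts rely on the earlier ones.

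First I would establish the Mean Value Inequality (d). Differentiating the vortex equation $F_A + u^* P(\Phi)\,\omega_C = 0$ and combining the Weitzenb\"ock formula for $\olp_A$ with the curvature relation, one finds that the energy density $e = \hh \Vert \d_A u\Vert^2 + \Vert \Phi(u)\Vert^2$ satisfies a pointwise inequality of the form $\Delta e \ge -c\,(e^2 + e)$, where $c$ depends only on the geometry of $X$ and bounds for the area form. A Heinz-type mean value argument for such subsolutions then yields $\hh\Vert \d_A u(0)\Vert^2 + \Vert \Phi(u(0))\Vert^2 \le (C/r^2)\,E((A,u)|_{B_r(0)})$, provided the total energy on $B_r(0)$ is below a threshold $\delta$. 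That $C,\delta$ depend only on bounds for $\lambda$ and its first two derivatives is precisely what \cite[Corollary 2.2]{ott:remov} delivers, and this uniformity is what makes the subsequent rescaling arguments go through.

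Removal of Singularities (a) then follows by a now-standard isoperimetric argument: finite energy and the mean value inequality force the energy on small annuli around the puncture to be small, so by the Annulus Lemma it decays exponentially; this produces a uniform modulus of continuity for $u$ near $0$, and elliptic regularity for the vortex equation (after fixing Coulomb gauge for $A$, see \cite[p.20]{ci:symvortex}) upgrades the continuous extension to a smooth vortex on $D$. Energy Quantization (b) is obtained by rescaling at a singular point: the rescaled sequence converges, after gauge transformation, either to a nonconstant $J$-holomorphic sphere in a fiber of $P(X)$, whose energy is bounded below by the minimal symplectic area, or to a nontrivial finite-energy affine vortex on $\C$, whose energy is bounded below by a positive quantum; in either case one extracts a universal lower bound $E_0 > 0$.

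The main obstacle is the Annulus Lemma (c), which is the technical heart of the theory and from which both (a) and (b) draw. The difficulty is to promote the pointwise mean value bound into exponential decay $E((A,u)|_{\AA(e^T r, e^{-T}R)}) \le C e^{-2\mu T} E(A,u)$, \emph{uniformly} over arbitrarily long necks; this requires an isoperimetric inequality on the annulus together with a second-order differential inequality for the energy of concentric sub-annuli, and crucially the convexity condition \eqref{convex} to prevent the section from escaping to the non-compact end of $X$ where control is lost. Following \cite[Section 5.2]{ott:remov}, I would write the energy on $\AA(e^T r, e^{-T}R)$ as a function $f(T)$ of the radial parameter, derive from the isoperimetric estimate a differential inequality of the schematic form $f''(T) \ge 4\mu^2 f(T)$, and integrate to obtain the stated exponential decay for any $\mu < 1$.
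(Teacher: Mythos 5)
The paper contains no proof of Theorem \ref{annlem}: all four parts are quoted from Ott \cite{ott:remov} with precise pointers (Theorem 1.1, Lemma 4.2, Section 5.2, Corollary 2.2), and the only original content is the closing remark that the constants in the mean value inequality depend only on bounds on $\lambda$ and its first two derivatives --- which the paper explicitly justifies by \emph{an examination of Ott's proof}, not by the statement of the corollary. So your sketch is competing with the cited literature rather than with an argument in the paper, and against that benchmark it is essentially faithful: the Bochner-type inequality $\Delta e_{A,u} \ge -c(e_{A,u}^2 + e_{A,u})$ combined with the Heinz trick for (d), small-energy neck decay plus Coulomb-gauge elliptic regularity for (a), rescaling at a singular point for (b), and exponential decay on long annuli for (c) is exactly the architecture of \cite{ott:remov}, with convexity of the moment map supplying the a priori $C^0$ bound on the image, as you say. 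One calibration: your assertion that the uniformity of $C,\delta$ in the area form is ``precisely what Corollary 2.2 delivers'' overstates the citation; this is the one point where the paper adds something, namely the observation that it follows only upon inspecting the proof.

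Two corrections to the sketch itself. First, your declared order (d), (a), (c), (b) is inconsistent with your own argument: the proof you give of removal of singularities invokes the Annulus Lemma, so (c) must precede (a) --- as you implicitly concede when you call (c) the result ``from which both (a) and (b) draw.'' Second, in (b) the affine-vortex alternative does not occur in the setting of this theorem: here the area form is fixed, so at a singular point the rescaled pair has connection converging to zero and the bubble is an ordinary $J$-holomorphic sphere in a fiber of $P(X)$; affine vortices bubble off only in the adiabatic regime $\rho_\nu \to \infty$, which the paper treats separately in Theorem \ref{annlem2} (citing \cite{zilt:phd}) and in the proof of Theorem \ref{scaledproper}. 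Including the extra case is harmless --- it only adds another positive lower bound --- but it conflates two bubbling regimes the paper is careful to keep apart. Finally, your schematic second-order inequality $f'' \ge 4\mu^2 f$ for (c) would work, but the mechanism in \cite{ott:remov}, and the one the paper itself re-runs for admissible area forms in Remark \ref{extend} and Lemma \ref{bconnect}, is first-order: the energy of a neck is a difference of Ziltener's invariant symplectic actions, and the isoperimetric inequality of \cite{zil:decay} yields $f'(T) \le -2\mu f(T)$ directly, which integrates to the stated decay.
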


\begin{remark}  Given these ingredients the proof of compactness goes as follows.  For
each {\em bubbling sequence} $z_\nu$ with $ \Vert \d_{A_\nu}
u_\nu(z_\nu) \Vert \to \infty $ one constructs by {\em soft rescaling}
a sequence of $J_{A^1_\nu}$-holomorphic maps $u^1_\nu$ on balls of
increasing radius, with the property that the limit $A^1_\nu$ is zero
and $u^1_\nu$ is an ordinary holomorphic map from $\C$ to $X$.  Note
that one does not have $C^2$ convergence of the sequence
$J_{A^1_\nu}$; however, \cite[Appendix]{ott:remov} shows that $C^0$
convergence of a sequence of almost complex structures is sufficient
as long as one has a version of the mean value inequality, which in
this case follows from the vortex equations.  The limiting
configuration is then constructed by induction.\end{remark} \vskip .1in

Denote by $\ovl{M}_n^K(C,X)$ the set of isomorphism classes of nodal
vortices with connected domain.  The {\em combinatorial type} of a
nodal vortex is a rooted tree $\Gamma$ with root vertex corresponding
to the principal component, equipped with a labelling of vertices by
elements of $H_2^K(X,\Z)$.  For any tree $\Gamma$ and homology class
$d \in H_2^K(X,\Z)$ we denote by $M^K_{n,\Gamma}(C,X,d)$ the space of
isomorphism classes of vortices of combinatorial type $\Gamma$, so
that
$$ \ovl{M}_n^K(C,X) = \bigcup_{\Gamma} M^K_{n,\Gamma}(C,X,d) $$
as $\Gamma$ ranges over connected combinatorial types.  We say that a
subset $S$ of $\ovl{M}_n^K(C,X)$ is {\em closed} if any convergent
sequence in $S$ has limit point in $S$, and {\em open} if its
complement is closed.  The open sets form a topology for which any
convergent sequence is convergent, and any convergent sequence has a
unique limit, by arguments similar to \cite[Lemma 5.6.5]{ms:jh}.
Namely local ``distance functions'' can be defined by combining the
distance functions of \cite{ms:jh} with the $L^2$-metric on the space
of connections.  In the case $n = 0$, given a constant $\eps > 0$ and
stable vortex $(P,A_0,\hat{C}_0,{u}_0)$ and another stable vortex
$(P,A_1,\hat{C}_1,{u}_1)$ with the same underlying bundle $P$ define
\begin{multline} \label{distfns}
 \dist_\eps([(P,A_0,\hat{C}_0,{u}_0)],[(P,A_1,\hat{C}_1,{u}_1)]) =
 \\ \inf_{k \in \K(P)} \Vert k \cdot A_1 - A_0 \Vert_{L^2} + \dist_\eps^0(
     [(\hat{C}_0,u_0)], [(\hat{C}_1,k \cdot u_1)]) \end{multline}
where $\dist_\eps^0$ is the distance on isomorphism classes of stable
maps defined on \cite[p. 134]{ms:jh}, but using the Yang-Mills-Higgs
energy on a small ball around each node.  It follows from the results
in Ott \cite{ott:remov} and elliptic regularity for vortices that for
$\eps $ sufficiently small, a sequence
$[(P,A_\nu,\hat{C}_\nu,{u}_\nu)]$ converges to
$[(P,A_0,\hat{C}_0,{u}_0)]$ if and only if
$$
\dist_\eps([(P,A_0,\hat{C}_0,{u}_0)],[(P,A_\nu,\hat{C}_\nu,{u}_\nu)])
\to 0 $$
and the remainder of the proof is similar to that in \cite{ms:jh}.
The sequential compactness Theorem \ref{compactnessforvortices}
implies:

\begin{theorem}  {\rm (Properness of the moduli space of symplectic vortices)}
Suppose that $X$ is a Hamiltonian $K$-manifold equipped with a proper
moment map convex at infinity and an invariant almost complex
structure $J$ and $C$ is a connected smooth projective curve.  Then
$\ovl{M}_n^K(C,X)$ is Hausdorff and the energy map $E: \ovl{M}_n^K(C,X)
\to [0,\infty)$ is proper.
\end{theorem}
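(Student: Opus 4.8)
The plan is to deduce both assertions from the sequential compactness Theorem~\ref{compactnessforvortices} together with the local metric structure on $\ol{M}_n^K(C,X)$ furnished by the distance functions \eqref{distfns}. First I would record that the topology on $\ol{M}_n^K(C,X)$ is metrizable. Since the topological type of the bundle is a discrete (hence locally constant) invariant under the convergence of Definition~\ref{vortexconverge}, classes of differing bundle type are automatically separated, so it suffices to work near a fixed isomorphism class with a fixed underlying $P$. There the functions $\dist_\eps$ of \eqref{distfns}, extended to the marked case by adding the configuration-space distance of the markings exactly as in \cite[Section 5.6]{ms:jh}, give a distance inducing the topology, because by the result quoted after \eqref{distfns} a sequence converges if and only if the corresponding distances tend to zero; the passage from these local distances to a global metric is standard and proceeds as in \cite[Section 5.6]{ms:jh}. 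Metrizability reduces every topological question below to the corresponding sequential one: compactness becomes sequential compactness, continuity becomes sequential continuity, and the Hausdorff property becomes uniqueness of sequential limits.

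Next I would establish the Hausdorff property. This is immediate from metrizability, but even without it one argues directly: the balls of $\dist_\eps$ of radius $1/m$ form a countable neighborhood basis, so $\ol{M}_n^K(C,X)$ is first countable, and the limit of any convergent sequence is unique, as already noted following \eqref{distfns}. If two distinct classes $p \neq q$ could not be separated by disjoint open sets, then choosing decreasing neighborhood bases $\{U_m\}$ at $p$ and $\{V_m\}$ at $q$ and a point $x_m \in U_m \cap V_m$ would produce a sequence converging simultaneously to $p$ and to $q$, contradicting uniqueness of limits. Hence $\ol{M}_n^K(C,X)$ is Hausdorff.

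For properness I would first note that $E$ is continuous: along any convergent sequence of nodal vortices no energy is lost, since all energy concentrating at singular points is recovered on the bubble components recorded in the nodal limit, this being built into Definition~\ref{vortexconverge} and the bubbling analysis of Theorem~\ref{annlem} underlying Theorem~\ref{compactnessforvortices}, so that $E(\lim) = \lim E$; sequential continuity together with metrizability gives continuity. Since every compact subset of $[0,\infty)$ lies in some $[0,R]$ and $E$ is continuous, it suffices to show that each sublevel set $E^{-1}([0,R])$ is compact, for then $E^{-1}(K)$ is a closed subset of the compact set $E^{-1}([0,R])$ whenever $K \subset [0,R]$ is compact. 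By metrizability, compactness of $E^{-1}([0,R])$ amounts to sequential compactness: given a sequence of classes with energy at most $R$, Theorem~\ref{compactnessforvortices} produces a subsequence converging to a nodal vortex, and energy conservation forces the limit to have energy at most $R$, hence to lie in $E^{-1}([0,R])$. Thus $E^{-1}([0,R])$ is compact and $E$ is proper.

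The genuinely hard analytic content, namely the bubbling analysis, energy quantization, and the resulting extraction of convergent subsequences with no loss of energy, is already contained in Theorem~\ref{compactnessforvortices} and the removal-of-singularities, annulus, and mean-value estimates of Theorem~\ref{annlem}. The only remaining obstacle is the soft-topology bookkeeping: verifying that the distance functions \eqref{distfns} genuinely metrize the Gromov topology, so that sequential compactness upgrades to compactness and uniqueness of sequential limits upgrades to the Hausdorff property, which is carried out exactly as in \cite[Section 5.6]{ms:jh}.
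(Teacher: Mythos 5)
Your proposal follows essentially the same route as the paper: the paper derives this theorem directly from the sequential compactness Theorem~\ref{compactnessforvortices}, with the Hausdorff property and the upgrade from sequential to topological statements handled via the local distance functions \eqref{distfns} and the arguments of \cite[Section 5.6]{ms:jh}, exactly as you do. Your write-up merely makes explicit the bookkeeping (metrizability, first countability, continuity of $E$ via conservation of energy under Gromov convergence) that the paper compresses into ``the remainder of the proof is similar to that in \cite{ms:jh}.''
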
 

\subsection{Large area limit}

In the large area limit the vortices are related to holomorphic maps
to the (possibly orbifold) symplectic quotient, as pointed out by
Gaio-Salamon \cite{ga:gw}.  The limiting process involves various
kinds of bubbling which one hopes to incorporate into a description of
the relationship.  

First recall the notion of symplectic quotient of $X$ by $K$ as
introduced by Mayer and Marsden-Weinstein: Suppose that $X$ is a
Hamiltonian $K$-manifold equipped with a proper moment map $\Phi: X
\to \k^\dual$.  Let
$$ X \qu K := \Phi^{-1}(0)/ K $$
denote the symplectic quotient.  If $K$ acts freely on $\Phi^{-1}(0)$,
then $X \qu K$ has the structure of a smooth manifold of dimension
$\dim(X) - 2 \dim(K)$.  The quotient $X \qu K$ has a unique symplectic
form $\omega_0$ satisfying $i^* \omega = p^* \omega_0$, where $i:
\Phi^{-1}(0) \to X$ and $p: \Phi^{-1}(0) \to X \qu K$ are the
inclusion and projection respectively.  Any invariant almost complex
structure $J$ on $X$ induces an almost complex structure on $X \qu K$.

If $X \subset \P(V)$ is a smooth projectively embedded variety in a
$G$-representation $V$ then $X \qu K$ is canonically homeomorphic to
the geometric invariant theory quotient $X \qu G$ introduced by
Mumford, by a theorem of Kempf-Ness.  The latter is defined as the
quotient of the {\em semistable locus}
$$ X^{\sss} = \{ x \in X | \exists k \in \Z_+, s \in H^0(X,\mO_X(k))^G,
s(x) \neq 0 \} $$
where $\mO_X(k)$ is the $k$-th tensor product of the hyperplane bundle
on $X$.  The quotient $X \qu G$ is the quotient of $X^{\sss}$ by the
{\em orbit-equivalence relation}
$$ x_1 \sim x_2 \iff \ovl{Gx_1}
\cap \ovl{G x_2} \cap X^{\sss} \neq \emptyset .$$ 
A point $x \in X$ is {\em stable} if $Gx$ is closed in $X^{\sss}$ and
the stabilizer $G_x$ is finite.  If stable=semistable in $X$ then the
points of $X \qu G$ are the orbits of $G$ in $X^{\sss}$, that is, two
orbits are equivalent iff they are equal.

Let $C$ be a connected smooth projective curve, and  suppose that $X
\qu K$ is a locally free quotient.  

\begin{definition}  A gauged holomorphic map $(A,u)$ is an
    {\em infinite-area vortex} if it satisfies
\label{limeq} $u^* P(\Phi ) = 0$.  
\end{definition} \vskip .1in  

Let $M^K(C,X)_\infty$ denote the set of gauge-equivalence classes of
infinite-area vortices, and $M(C,X\qu K)$ the set of holomorphic maps
from $C$ to $X \qu K$.

\begin{proposition}   \label{bij}
%Suppose that $X$ is a compact Hamiltonian $K$-manifold equipped with
%an invariant almost complex structure $J$ and $C$ is a connected smooth projective
%curve.  
Suppose that $K$ acts locally freely on $\Phinv(0)$.  Then
there is a bijection from $M^K(C,X)_\infty$ to $M(C,X \qu
K)$.  \end{proposition}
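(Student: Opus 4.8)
The plan is to construct the bijection $\Psi\colon M^K(C,X)_\infty \to M(C,X\qu K)$ explicitly, together with its inverse. Given an infinite-area vortex $(A,u)$ on a bundle $P\to C$, the condition $u^* P(\Phi)=0$ says exactly that the section $u$ takes values in the subbundle $P(\Phinv(0)) = P\times_K \Phinv(0)\subset P(X)$. Composing $u$ with the fiberwise quotient map $q\colon P(\Phinv(0))\to \Phinv(0)/K = X\qu K$, $[p,x]\mapsto [x]$, produces a map $\bar u\colon C\to X\qu K$; I set $\Psi[(A,u)] = \bar u$. First I would check that $\Psi$ is well defined: a gauge transformation $\phi\colon P\to P$ induces an automorphism of $P(X)$ commuting with $q$, so $q$, and hence $\bar u$, is gauge-invariant, and $\Psi$ descends to gauge-equivalence classes.

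The first substantive step is to verify that $\bar u$ is holomorphic. Here I would use the connection $A$ to split $T\Phinv(0)$ along the image of $u$ into the orbit directions $\k\cdot x$ and their orthogonal complement $H_x$ with respect to $g=\omega(\cdot,J\cdot)$. The point is that $H_x$ is $J$-invariant and $q_*$ identifies $(H_x,J|_{H_x})$ with $(T_{[x]}(X\qu K),J_0)$, the induced almost complex structure. Since $u$ maps into $P(\Phinv(0))$, its covariant derivative $\d_A u$ takes values in $T\Phinv(0)$; the condition $\olp_A u=0$, being complex-linearity of $\d_A u$, then forces $J(\d_A u(v))$ to be tangent to $\Phinv(0)$ as well. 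As $J(\k\cdot x)$ meets $\ker \d\Phi_x = T_x\Phinv(0)$ only in $0$ (the moment-map transversality that uses local freeness), this compels the orbit-component of $\d_A u$ to vanish, so $\d_A u$ is $H$-valued and $q_*\d_A u = \d\bar u$ is complex-linear.

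To build the inverse I would pull back structure from the principal $K$-bundle $q\colon \Phinv(0)\to X\qu K$. Given a holomorphic $f\colon C\to X\qu K$, form $P_f = f^*\Phinv(0)$ with the connection $A_f$ obtained by pulling back the natural connection on $\Phinv(0)\to X\qu K$ whose horizontal space is the orthogonal complement to the orbits; the tautological $K$-equivariant map $P_f\to\Phinv(0)\subset X$ defines a section $u_f$. Then $u_f^*P(\Phi)=0$ by construction, $\overline{u_f}=f$, and $\olp_{A_f}u_f=0$ follows because, under the identification $q_*\colon H\cong T(X\qu K)$, the $A_f$-horizontal lift of $\d f$ is complex-linear. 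This shows $\Psi$ is surjective.

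Finally I would prove injectivity, which simultaneously pins down the inverse. If $(A,u)$ is any infinite-area vortex with $\bar u = f$, then the $K$-equivariant map $P\to\Phinv(0)$ determined by $u$ covers $f$ and hence gives a canonical isomorphism $P\cong P_f$ carrying $u$ to $u_f$. Under this isomorphism write $A = A_f + a$ with $a\in\Omega^1(C,P(\k))$; computing $\olp_A u_f$ and separating the $H$- and orbit-components, the holomorphicity already established for the $H$-part leaves the equation $(a(jv))_X = J\,(a(v))_X$, whose two sides lie in $\k\cdot x$ and $J(\k\cdot x)$ respectively. Local freeness gives $\k\cdot x\cap J(\k\cdot x)=0$, forcing $a=0$, so $A=A_f$ and $[(A,u)]=[(A_f,u_f)]$. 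I expect the main obstacle to be precisely this interplay between the vortex equation and the moment-map geometry: showing that $\olp_A u=0$ both descends to holomorphicity downstairs and rigidifies the connection, all of which rests on the transversality $\k\cdot x\cap J(\k\cdot x)=0$ that encodes local freeness.
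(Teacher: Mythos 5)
Your proof is correct and takes essentially the same route as the paper's (which follows Gaio--Salamon): the forward map by composing with the quotient $\Phinv(0) \to X \qu K$, the inverse via the pullback bundle $f^*\Phinv(0)$ equipped with the connection whose horizontal distribution is $H_x = (\k \cdot x)^{\perp} \cap \ker \d\Phi_x$, and injectivity by gauge-fixing $u = u_f$ and using $\olp_A u = 0$ together with local freeness to force $A = A_f$. The only difference is one of detail: you spell out the transversality $J(\k \cdot x) \cap \ker \d\Phi_x = 0$ and the computation killing the perturbation $a$, which the paper compresses into the assertion that the Cauchy--Riemann equations and local freeness imply $A = A'$.
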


\begin{proof}   See Gaio-Salamon \cite[Section 2]{ga:gw}.  
Given an infinite-area vortex $(A,u)$, let $\ovl{u}: C \to X \qu K$
denote the composition of $u$ with the quotient map $\Phinv(0) \to X
\qu K$.  The equation $\olp_A u = 0$ implies $\olp \ovl{u} = 0$ (since
$TX \to T(X \qu K)$ is holomorphic) and so $\ovl{u} \in M(C, X \qu
K)$. Conversely, given $\ovl{u}: C \to X \qu K$ let $P$ be the
pull-back of $\Phinv(0) \to X \qu K$, equipped with the connection
given by $J TX | \Phinv(0) \cap TX | \Phinv(0) \cong \pi^* T(X \qu
K)$.  The equivariant map $P \to \Phinv(0)$ defines a section $u: C
\to P \times_K X$.  The equation $\olp \ovl{u} = 0$ implies $\olp_A u =
0$, since $J_A$ agrees with $J_{X \qu K}$ on $\pi^* T(X \qu K)$.  If
$(A,u)$ is constructed in this way from $\ovl{u}$ then the
corresponding map to $X \qu K$ is $\ovl{u}$.  To see that the map
$(A,u) \mapsto \ovl{u}$ is injective, suppose that $(A,u)$ and
$(A',u')$ define the same holomorphic map to $X \qu K$.  This means
that there is a gauge transformation $k$ so that $ku = u'$. the
equations $\olp_A u = 0 = \olp_{A'} u'$ and local freeness of the
action imply that $kA = A'$.
\end{proof} 

Gaio-Salamon \cite{ga:gw} 
studied under what conditions a sequence of
symplectic vortices defined with respect to an area form $\rho_\nu
\omega_C$ with $\rho_\nu \to \infty$ converge to a solution of the
limiting equations:

\begin{proposition} \label{sc} {\rm (Sequential compactness for bounded first derivative, \cite{ga:gw} )}  
Suppose that $X$ is a Hamiltonian $K$-manifold with proper moment map
convex at infinity equipped with an invariant almost complex structure
$J$ and $C$ is a connected smooth projective curve.  Suppose that
$(A_\nu,u_\nu)$ is a sequence of symplectic vortices for $\rho_\nu
\omega_C$ with the property that $\sup \Vert\d_{A_\nu} u_\nu\Vert$ is
bounded. Then there exists a subsequence and a sequence of gauge
transformations $k_\nu$ such that $k_\nu(A_\nu,u_\nu)$ converges to an
infinite-area vortex $(A_\infty,u_\infty)$ uniformly in all
derivatives.
\end{proposition}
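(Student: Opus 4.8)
The plan is to reduce the statement to the adiabatic-limit analysis of Gaio--Salamon \cite{ga:gw}: combine Uhlenbeck compactness for the connections with elliptic bootstrapping for the sections, and then identify the limit as an infinite-area vortex by letting $\rho_\nu \to \infty$ in the vortex equation. Throughout, the vortex equation reads $F_{A_\nu} + u_\nu^* P(\Phi)\,\rho_\nu \omega_C = 0$, so that $* F_{A_\nu} = - \rho_\nu\, u_\nu^* P(\Phi)$; the curvature and the moment-map term are thus proportional, with the large factor $\rho_\nu$ forcing $u_\nu^* P(\Phi)$ to be small. A first observation is that convexity of $X$ at infinity confines the sequence to a compact region: applying the maximum principle to $f \circ u_\nu$, with $f$ as in \eqref{convex}, shows that the images $u_\nu(C)$ lie in a fixed compact subset of $X$, so the target is effectively compact for the elliptic estimates below. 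Next, since $\sup \Vert \d_{A_\nu} u_\nu \Vert$ is bounded and $C$ has finite area, the energy--area relation \eqref{energyaction} (whose correction terms vanish on a vortex) bounds the energy $E(A_\nu, u_\nu)$ uniformly in $\nu$, and in particular controls the curvature in $L^2$.

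With these bounds in place I would apply Uhlenbeck compactness: there exist gauge transformations $k_\nu \in \K(P)$ and a subsequence so that $k_\nu \cdot A_\nu$ converges weakly in $W^{1,p}$ and strongly in $C^0$ to a limit connection $A_\infty$, exactly as in the proof of Theorem \ref{compactnessforvortices}. Putting $k_\nu \cdot A_\nu$ into Coulomb gauge relative to $A_\infty$ turns the two vortex equations $\olp_{A_\nu} u_\nu = 0$ and $* F_{A_\nu} = -\rho_\nu\, u_\nu^* P(\Phi)$, together with the gauge-fixing condition, into an elliptic system for the pair $(A_\nu, u_\nu)$. Using the $C^1$-bound on $u_\nu$ and the compactness of the image, elliptic regularity for vortices \cite[p.~20]{ci:symvortex} upgrades the convergence to $C^l$ for every $l$, and a diagonal argument extracts a subsequence converging in $C^\infty$ to a pair $(A_\infty, u_\infty)$ satisfying $\olp_{A_\infty} u_\infty = 0$. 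Finally, from $u_\nu^* P(\Phi) = -\rho_\nu^{-1}\, {*} F_{A_\nu}$ and the uniform control of $F_{A_\nu}$ one reads off $u_\nu^* P(\Phi) \to 0$; passing to the $C^\infty$ limit gives $u_\infty^* P(\Phi) = 0$, so that $(A_\infty, u_\infty)$ is an infinite-area vortex in the sense of Definition \ref{limeq} (and, by Proposition \ref{bij}, descends to a holomorphic map $C \to X \qu K$).

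The hard part is making the elliptic estimates uniform in the diverging parameter $\rho_\nu$. The coupling term $\rho_\nu\, u_\nu^* P(\Phi)$ carries a factor that blows up, so the naive pointwise bound $\Vert u_\nu^* P(\Phi) \Vert \le C$ coming from compactness of the image is useless; the whole point is that this term equals $-{*} F_{A_\nu}$, which is controlled a priori only in $L^2$ and must be promoted to higher norms before one can conclude. This requires differentiating the two vortex equations simultaneously and using the Cauchy--Riemann equation $\olp_{A_\nu} u_\nu = 0$ to trade derivatives of $u_\nu$ against derivatives of $A_\nu$, which is precisely the $\rho_\nu$-independent estimate established in Gaio--Salamon \cite{ga:gw}. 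This uniform estimate plays the role that the mean value inequality of Theorem \ref{annlem}(d) plays in the fixed-area, bounded-derivative case; once it is available, the Uhlenbeck step, the bootstrapping, and the extraction of a $C^\infty$-convergent subsequence proceed as in Theorem \ref{compactnessforvortices}.
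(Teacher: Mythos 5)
Your proposal is correct in outline, and it coincides with the paper's treatment to the extent the paper has one: the paper gives no proof of this proposition at all, attributing it wholesale to Gaio--Salamon \cite{ga:gw}, and the scaffolding you describe --- $C^0$ confinement of the images via convexity at infinity, Uhlenbeck compactness after gauge transformation, Coulomb gauge relative to the limit connection, elliptic bootstrap to $C^\infty$ convergence, and then passage to the limit in the rescaled vortex equation to obtain $u_\infty^* P(\Phi) = 0$ --- is exactly the argument the paper sketches for the fixed-area compactness Theorem \ref{compactnessforvortices}, upgraded by the $\rho$-independent estimate that you correctly delegate to \cite{ga:gw}.

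One substantive caveat about where that uniformity actually comes from. Your starting point, that $*F_{A_\nu}$ is ``controlled a priori in $L^2$,'' is too optimistic: in the fixed metric on $C$, a uniform energy bound yields only $\Vert F_{A_\nu} \Vert_{L^2} = O(\rho_\nu^{1/2})$, since the curvature term in the energy for the area form $\rho_\nu \omega_C$ carries a factor $\rho_\nu^{-1}$ relative to the fixed-metric $L^2$ norm --- so there is nothing to ``promote.'' The estimate actually supplied by Gaio--Salamon is pointwise: $\sup_C \Vert \Phi \circ u_\nu \Vert \le C/\rho_\nu$, equivalently a uniform $C^0$ bound on $F_{A_\nu}$ in the fixed metric, and it is proved not by trading derivatives between the two equations but by a maximum principle applied to $\Vert \Phi \circ u_\nu \Vert^2$, using the coercivity $\Vert \xi_X(x) \Vert \ge m \Vert \xi \Vert$ near $\Phinv(0)$ coming from local freeness of the $K$-action there. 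This is where the standing hypothesis of this section of the paper --- that $X \qu K$ is a locally free quotient --- enters, and your write-up should state it explicitly, since without it the estimate, and the proposition itself, fail. Note also that the uniform energy bound you extract from the energy--area relation \eqref{energyaction} presupposes control of $\rho_\nu \Vert \Phi(u_\nu)\Vert^2$ (or of the homology classes $[u_\nu]$), so it too rests on this same pointwise estimate rather than preceding it. Granted that estimate, your Uhlenbeck/Coulomb/bootstrap steps and the identification of the limit as an infinite-area vortex in the sense of Definition \ref{limeq} (hence, via Proposition \ref{bij}, a holomorphic map to $X \qu K$) go through as you describe and match the intended proof.
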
 

\vskip .1in Without a bound on the first derivative, various kinds of
bubbling occur.

\begin{proposition} {\rm (Bubble zoology for the infinite area limit)} 
Suppose that $X$ is a Hamiltonian $K$-manifold with a proper moment
map convex at infinity and an invariant almost complex structure $J$
and $C$ is a connected smooth projective curve.  Suppose that
$(A_\nu,u_\nu)$ is a sequence of vortices for $\rho_\nu \Vol_C,
\rho_\nu \to \infty$ with
$$c_\nu := \sup \Vert \d_{A_\nu} u_\nu \Vert = \Vert \d_{A_\nu} u_\nu
(z_\nu ) \Vert, \quad \eps_\nu := \rho_\nu /c_\nu .$$
Consider the {\em rescaled pair} $ \phi_\nu^* (A_\nu,u_\nu)$ on
$B_{c_\nu}(0)$ where $ \phi_\nu(z) = z_\nu + z/c_\nu $.  Noting that
$\phi_\nu^* (A_\nu,u_\nu)$ has $ \sup \Vert \d_{\phi_\nu^* A_\nu} u_\nu \Vert = 1
$ bounded.  Then after passing to a subsequence one of the following
possibilities occurs:

\begin{enumerate} 
\item {\rm (Sphere Bubble in $X \qu K$)} If $\lim_{\nu \to \infty} \eps_\nu =
  \infty$, then $\phi_\nu^* (A_\nu,u_\nu)$ converges to a solution to
  a solution to \ref{limeq}, that is, is equivalent to a holomorphic
  map to $\C \to X \qu K$.
\item {\rm (Affine vortex)} If $\lim_{\nu \to \infty} \eps_\nu \in
  (0,\infty)$, then $\phi_\nu^* (A_\nu,u_\nu)$ converges to a vortex
  on the affine line $\bA$, with respect to the Euclidean area form
  $\Vol_\bA = \frac{i}{2} \d z \wedge \d \ovl{z}$.
 \item {\rm (Sphere Bubble in $X$)} If $\lim_{\nu \to \infty} \eps_\nu = 0
   $, then $\phi_\nu^* (A_\nu,u_\nu)$ converges to a vortex with zero
   vortex parameter, that is, a holomorphic map $\C \to X$.
\end{enumerate} 
\end{proposition}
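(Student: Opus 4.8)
The plan is to analyze the rescaled pairs $(\tilde A_\nu,\tilde u_\nu):=\phi_\nu^*(A_\nu,u_\nu)$ directly. First I would record the equation they satisfy: since $(A_\nu,u_\nu)$ is a vortex for the area form $\rho_\nu\Vol_C$, it solves $F_{A_\nu}+u_\nu^* P(\Phi)\,\rho_\nu\Vol_C=0$ together with $\olp_{A_\nu}u_\nu=0$. Pulling these back under $\phi_\nu(z)=z_\nu+z/c_\nu$, the Cauchy--Riemann equation is conformally invariant so $\olp_{\tilde A_\nu}\tilde u_\nu=0$, while the curvature identity becomes again a vortex equation, now on the growing domains $B_{c_\nu}(0)$ exhausting $\C$ and with respect to the rescaled area form $\phi_\nu^*(\rho_\nu\Vol_C)$. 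The point is that the density of this rescaled form converges in $C^\infty_{\loc}$ to a constant multiple of the Euclidean form, the constant being governed by $\lim\eps_\nu$. By construction $\sup\Vert\d_{\tilde A_\nu}\tilde u_\nu\Vert=1$ with the supremum attained at the origin, so any limit will be nonconstant.

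Next I would extract a limit using the compactness machinery already assembled in Section \ref{vor}. Via Uhlenbeck compactness I put $\tilde A_\nu$ in Coulomb gauge relative to a reference connection on each ball $B_R(0)$. The Mean Value Inequality of Theorem \ref{annlem}(d), together with the Remark that its constants depend only on bounds for the area density and its first two derivatives, applies \emph{uniformly} in $\nu$ precisely because the rescaled densities converge smoothly; this bounds the energy density on compact sets. Elliptic regularity for vortices (\cite{ci:symvortex}) then upgrades the resulting weak $W^{1,p}$ and $C^0$ bounds to a $C^\infty_{\loc}$-convergent subsequence on all of $\C$, with limit $(A_\infty,u_\infty)$. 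The normalization $\Vert\d_{\tilde A_\nu}\tilde u_\nu(0)\Vert=1$ and Energy Quantization (Theorem \ref{annlem}(b)) guarantee that $(A_\infty,u_\infty)$ is nonconstant and carries energy at least $E_0$.

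It then remains to identify the limit according to $\lim\eps_\nu$, which controls the surviving coefficient of the moment-map term in the rescaled vortex equation. When $\lim\eps_\nu=\infty$ this coefficient blows up, forcing $u_\infty^* P(\Phi)=0$, so $(A_\infty,u_\infty)$ is an infinite-area vortex in the sense of Definition \ref{limeq}; by Proposition \ref{bij} it descends to a $J$-holomorphic map $\C\to X\qu K$. When $\lim\eps_\nu\in(0,\infty)$ the coefficient survives as a positive constant, which after the normalization $\lambda=\d z$ is exactly the affine vortex equation, so $(A_\infty,u_\infty)$ is a vortex on $\bA$ with the Euclidean area form. When $\lim\eps_\nu=0$ the moment-map term drops out and $F_{A_\infty}=0$; a gauge transformation trivializes the flat connection on the simply connected domain and $u_\infty$ becomes an ordinary $J$-holomorphic map $\C\to X$, i.e. a vortex with zero vortex parameter. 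In all three cases Removal of Singularities (Theorem \ref{annlem}(a)), applied to the finite-energy limit, extends the configuration across the puncture at infinity and produces a genuine bubble.

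The main obstacle is the connection part of the analysis: unlike the holomorphic-map factor, which is controlled by standard bootstrapping, the gauge field requires Uhlenbeck compactness and a careful choice of Coulomb gauge, and one must verify that the a priori estimates stay uniform as the area form degenerates and rescales. The crux is the uniformity of the Mean Value Inequality under rescaling, which is available here only because its constants depend solely on the area density and its first two derivatives, and these remain bounded after pulling back by $\phi_\nu$. The second point requiring care is the bookkeeping of the effective vortex parameter, so that the trichotomy $\lim\eps_\nu\in\{\infty\},(0,\infty),\{0\}$ matches the three bubble types in $X\qu K$, on $\bA$, and in $X$ respectively.
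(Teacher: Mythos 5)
Your outline is the same soft-rescaling argument that the paper delegates to Gaio--Salamon \cite{ga:gw} and to the ingredients assembled in Theorems \ref{annlem}--\ref{annlem2} (rescale at the point of maximal covariant derivative, extract a limit via Uhlenbeck compactness and Coulomb-gauge elliptic regularity, and read off the bubble type from the surviving coefficient of the moment-map term), and your bookkeeping of the trichotomy is the right way around. But there is a genuine gap in the regime $\eps_\nu \to \infty$. Your compactness mechanism is the uniform Mean Value Inequality of Theorem \ref{annlem}(d), justified ``because the rescaled densities converge smoothly''; in case (a) they do not converge --- the effective coefficient $\kappa_\nu$ multiplying $\tilde{u}_\nu^* P(\Phi)$ in the rescaled vortex equation blows up --- so the constants you quote degenerate exactly where case (a) lives. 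More seriously, the normalization $\sup \Vert \d_{\tilde{A}_\nu} \tilde{u}_\nu \Vert = 1$ controls only the map part: the curvature satisfies $F_{\tilde{A}_\nu} = - \kappa_\nu \, \tilde{u}_\nu^* P(\Phi)\, \tilde{\omega}_\nu$, and without an a priori bound on $\kappa_\nu \Vert \Phi(\tilde{u}_\nu) \Vert$ you have no local $L^p$ curvature bound to feed into Uhlenbeck compactness, nor any way to conclude $u_\infty^* P(\Phi) = 0$ in the limit. What is missing is the package of estimates, uniform in $\rho_\nu$, that Gaio--Salamon derive from the maximum principle using precisely the hypotheses of the proposition (properness of $\Phi$, convexity at infinity): first, that the images $u_\nu(C)$ remain in a fixed compact subset $X_0 \subset X$ --- note $X$ is noncompact, so even boundedness of $\Phi \circ u_\nu$ is not automatic --- and second, a decay estimate of the form $\sup_C \Vert \Phi(u_\nu) \Vert \le C/\rho_\nu$, obtained by applying the maximum principle to $\Vert \Phi(u_\nu) \Vert^2$ and to the convexity function $f$ of \eqref{convex}. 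These bound the rescaled curvature uniformly, force the limit into $P(\Phinv(0))$ when $\eps_\nu \to \infty$, and only then does Proposition \ref{bij} identify the bubble with a holomorphic map to $X \qu K$. Cases (b) and (c) then run as you describe.

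A secondary slip: at the end you invoke Theorem \ref{annlem}(a) to remove the singularity at infinity of the affine-vortex bubble. Remark \ref{extends} points out that this fails for the Euclidean area form $\Vol_{\bA}$, which has a pole of order two at $\infty \in \P$: the correct statement is Theorem \ref{annlem2}(d), which extends only $Ku$ continuously to $\P$ with $Ku(\infty) \in X \qu K$ (in general a point of the inertia orbifold $I_{X \qu K}$), not the vortex itself. Since the proposition asserts only convergence on $\C$ resp.\ $\bA$, this does not invalidate the conclusion, but as written your final step claims more than is true.
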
 

The bubble trees that occur are described further in the following
section. 

\subsection{Affine symplectic vortices}

In this section we further study vortices on the complex affine line
$\bA$, which arose in the Gaio-Salamon study \cite{ga:gw} of the
large area limit.  Ziltener \cite{zilt:phd} studied the bubbles that
arise in more detail.

\begin{definition} {\rm (Affine vortices)}
Suppose that $X$ is a compact Hamiltonian $K$-manifold equipped with
an invariant almost complex structure $J$.  An {\em $n$-marked affine
  symplectic vortex} with target $X$ is a datum $(A,u,\ul{z})$, where
$A \in \Omega^1(\bA,\g)$ is a connection on the trivial bundle $P :=
\bA \times K \to \bA$, $u: \bA \to X$ is a $J_A$-holomorphic map,
$\ul{z} = (z_0,\ldots, z_n) \in \bA^n$ is an $n$-tuple of distinct
points, and
$$ {\rm (Affine \ Vortex \ Equation)} \quad F_A + u^* \Phi \, \Vol_\bA = 0. $$
An {\em isomorphism} of scaled vortices $(A_0,u_0)$ to $(A_1,u_1)$ is
translation $\phi$ of $\bA$ and a gauge transformation $k: \bA \to K$
satisfying $k^* \phi^* (A_1,u_1) = (A_0,u_0)$.
\end{definition} \vskip .1in 

Let $M_{n,1}^K(\bA,X)$ denote the moduli space of isomorphism classes
of $n$-marked affine vortices.  It has a natural compactification by
isomorphism classes of {\em nodal affine vortices} described as
follows.  Let $C$ be an $(n+1)$-marked connected genus zero nodal
curve with irreducible components $C_1,\ldots, C_k$.  For each
irreducible component $C_i$ not containing $z_0$ there is a unique
node $\hat{w}_i$ which disconnects $C_i$ from the marking $z_0$; we
denote by $C^\circ_i = C_i - \{ \hat{w}_i \}$ the complement.  In case
$C_i$ contains $z_0$ we denote $C^\circ_i = C_i - \{ z_0 \}$.  Each
affine curve $C^\circ_i$ is isomorphic to $\C$, uniquely up to
translation and dilation.  Thus $C^\circ_i$ admits a unique
equivalence class of K\"ahler forms, equal to $\omega_{\bA} = \d z
\wedge \d \ovl{z} (i/2) $ up to scalar multiplication.

\begin{definition}  {\rm (Nodal affine vortices)} 
Suppose that $X$ is a compact Hamiltonian $K$-manifold equipped with
an invariant almost complex structure $J$.  An {\em $n$-marked nodal
  affine symplectic vortex} with target $X$ is a datum
$(C,P,A,u,\omega,\ul{z})$ consisting of a connected $(n+1)$-marked nodal
curve $C$ together with a principal $K$-bundle $P \to C$, a (possibly
infinite or zero) two form $\omega: C \to \P(\Lambda^2 T^\dual_\R C
\oplus \R)$, a connection $A_i$ on each $P |C_i$, a section $u: C \to
P(X)$, and markings $\ul{z}= (z_1,\ldots, z_n)$, such that each
irreducible component $C_i$ of $C$ is one of the following three types:
\begin{enumerate}
\item (Zero Scaling) Components with zero two-form, equipped with a
  trivial bundle $P | C_i$ and a pseudoholomorphic map $u_i: C_i \to
  X$.
\item (Finite, non-zero scaling) Components with non-zero, finite area
  form $\omega | C_i \in \Omega^2(C^\circ_i)$ equal to a non-zero
  multiple of $\omega_{\bA}$ on $C^\circ_i \cong \bA$ and an affine
  vortex $(A_i,u_i)$ on $C^\circ_i$.
\item (Infinite scaling) Components $C_i$ with infinite two-form
  $\omega | C_i$, equipped with holomorphic sections $u| C_i : C_i \to
  P(X)$ mapping to the zero level set $P(\Phinv(0))$, and so defining
  a holomorphic map $\ovl{u}_i: C_i \to X \qu K.  $
\end{enumerate} 
This datum should satisfy the following conditions
\begin{enumerate} 
\item (Monotonicity) For every non-self-crossing path from a marking
  $z_i, i > 0$ to the marking $z_0$, the path crosses exactly one
  irreducible component with finite, non-zero area form, and all
  irreducible components before resp. after that irreducible component
  have zero resp. infinite area form.
\item (Continuity) If $C_i$ meets $C_j$ at a node represented by a
  pair $(w_{ij}, w_{ji}) \in C_i \times C_j$ then $u_i(w_{ij}) =
  u_j(w_{ji})$.
\item (Stability) If $C_i$ is an irreducible component on which the two-form is
  zero or infinity resp. finite and non-zero and $u_i$ is constant
  resp. $u_i$ is covariant constant and $A_i$ is flat then $C_i$
  contains at least two resp. three special points.
\end{enumerate} 
\end{definition} \vskip .1in 

\begin{remark} An irreducible component $C_i$ is a {\em ghost component} if it
satisfies one of the hypotheses requiring at least three special
points or non-degenerate scalings; that is, $ \ovl{u}_i$ is constant or
$A | C_i$ is flat and $u |C_i$ is covariant constant.  The stability
condition can then be reformulated as the condition that any ghost
component has at least three special points (nodes or markings) or two
special points and a non-zero, finite area form.  Either of these
conditions is equivalent to the absence of non-trivial infinitesimal
automorphisms: infinitesimal automorphisms arising from gauge
transformations are impossible because of the local freeness
assumption for the action on the zero level set of the moment map. 
\end{remark} \vskip .1in 

There is a natural notion of {\em convergence} of affine nodal
vortices which generalizes convergence with fixed area form in
Definition \ref{vortexconverge}.

\begin{definition} {\rm (Convergence of nodal affine vortices)} 
A sequence of isomorphism classes of nodal marked affine symplectic
vortices $[(C_\nu,P_\nu,A_\nu,u_\nu,\omega_\nu,\ul{z}_\nu)]$ {\em
  converges} to a nodal marked affine vortex
$[(C,P,A,u,\omega,\ul{z})]$ iff there exists for each irreducible
component $C_j$ of $C$, a sequence $U_{j,\nu} \subset C_j$ of
increasing open neighborhoods and for each $\nu$, a holomorphic
embedding $\phi_{j,\nu}: U_{j,\nu} \to C_\nu$, an isomorphism
$\psi_{j,\nu}: \phi_{j,\nu}^* P_\nu \to P$ such that if
$\psi_{j,\nu}(X): \phi_{j,\nu}^* P_\nu(X) \to P(X)$ denotes the
associated maps of fiber bundles then 
\begin{enumerate} 
\item {\rm (Open neighborhoods cover)} the union of
  $\phi_{j,\nu}(U_{j,\nu})$ is $C_\nu$;
\item {\rm (Marked curves converge)} If $z_i \in C_j$ then the limit
  of $\phi_{j,\nu}^{-1}(z_{\nu,i})$ is defined and equal to $z_i$.
  Furthermore, if $i \neq j$ then the image of $\phi_{j,\nu}^{-1}
  \circ \phi_{i,\nu}$ converges to the node of $C_i$ connecting to
  $C_j$;
\item {\rm (Connections converge)} $\psi_{j,\nu}^* A_\nu$ converges to
  $A | C_j$ uniformly in all derivatives in any compact subset of 
  any $U_{j,\nu'}$;
\item {\rm (Sections converge)} $\psi_{j,\nu}(X)^* u_\nu$ converges to
  $u | C_j$ uniformly in all derivatives in any compact subset of any 
  $U_{j,\nu}$;
\item {\rm (Scalings converge)} $\phi_{j,\nu}^{*} \omega$ converges to
  $\omega | C_j$ in $C^0$ on any compact subset of any $U_{j,\nu'}$;
\item {\rm (Energies converge)} $ \lim_{\nu \to \infty}
  E(C_\nu,P_\nu,A_\nu,u_\nu,\omega_\nu,\ul{z}_\nu)) =
  E(C,P,A,u,\omega,\ul{z}) .$
\end{enumerate} 
\end{definition} \vskip .1in  

\begin{proposition} {\rm (Sequential compactness for affine vortices)}
\label{affinecompact} Suppose that $X$ is a Hamiltonian $K$-manifold with a proper
moment map convex at infinity and an invariant almost complex
structure $J$.  Any sequence of isomorphism of classes of stable
marked affine vortices with bounded energy has a convergent
subsequence.
\end{proposition}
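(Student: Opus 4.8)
The plan is to run a Gromov-type compactness argument adapted to vortices, using the four local estimates of Theorem~\ref{annlem} as the analytic engine and inducting on the number of bubble components. First I would invoke energy quantization (Theorem~\ref{annlem}(b)): there is a threshold $E_0 > 0$ below which no nontrivial bubble can form, so the uniform energy bound forces a uniform bound on the number of irreducible components. Passing to a subsequence, I may therefore assume that the combinatorial type $\Gamma$ is fixed, and it suffices to produce a convergent subsequence for each fixed type, analyzing the finite-scaling components and the attached bubble trees separately.

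On each finite-scaling component, set $c_\nu = \sup \Vert \d_{A_\nu} u_\nu \Vert$. If $c_\nu$ is bounded, then after Uhlenbeck compactness and passage to Coulomb gauge relative to a weak limit $A_\infty$, elliptic regularity for vortices (as in \cite{ci:symvortex} and the argument following Theorem~\ref{compactnessforvortices}) yields $C^\infty_{\loc}$ convergence on compact subsets of $\bA$. If $c_\nu \to \infty$, I would perform a soft rescaling at a point $z_\nu$ realizing the supremum: zooming in by $\phi_\nu(z) = z_\nu + z/c_\nu$ sends the area form to zero and produces, in the limit, a $J$-holomorphic sphere in $X$, i.e.\ a zero-scaling bubble. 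Energy that instead escapes to infinity on $\bA$ is recovered by recentering at the relevant scale, producing additional finite-scaling affine-vortex components. In every case the uniform derivative bounds needed to extract the limit come from the mean value inequality (Theorem~\ref{annlem}(d)), whose constants, as remarked there, depend only on bounds on the rescaled area form and its first two derivatives and hence survive the rescaling.

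The behaviour at the $z_0$ end of the affine line is handled by finite energy together with the annulus lemma (Theorem~\ref{annlem}(c)): the exponential energy decay on long necks forces the connection and section to converge, and removal of singularities (Theorem~\ref{annlem}(a)) lets the vortex cap off at infinity to a point of $\Phinv(0)/K = X \qu K$. Via the correspondence of Proposition~\ref{bij}, this produces the infinite-scaling components as holomorphic maps to the quotient, and the monotone ordering zero/finite/infinite scaling along any path to $z_0$ is preserved in the limit because the annulus lemma prevents energy from migrating across the necks. The same exponential decay shows that no energy is lost in the neck regions, so the energies converge and the continuity condition $u_i(w_{ij}) = u_j(w_{ji})$ at the nodes holds; stability of the limit follows since each ghost component inherits at least the required number of special points. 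Finally I would package convergence using distance functions of the form \eqref{distfns}, measuring the $L^2$-distance of connections together with the stable-map distance computed with the Yang-Mills-Higgs energy on small balls around the nodes and around infinity, and conclude by induction on the number of components.

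The hard part will be the analysis at infinity on the non-compact domain and the uniform control of the neck regions where the scaling degenerates: one must ensure, with constants that remain uniform under the rescalings, that no energy escapes to infinity and that the correct monotone zero/finite/infinite structure emerges in the limit. This is precisely the point at which the affine-vortex estimates of Ziltener \cite{zilt:phd} and Ott \cite{ott:remov}, rather than the compact-domain theory alone, become indispensable.
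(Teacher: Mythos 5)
Your overall skeleton --- energy quantization to bound the bubble tree, soft rescaling at points where $\Vert \d_{A_\nu} u_\nu \Vert$ blows up, the annulus lemma to prevent energy loss in the necks, and local distance functions as in \eqref{distfns} --- is the same strategy the paper uses, which is to assemble the affine estimates of Ziltener and Ott collected in Theorem \ref{annlem2}. But two of your steps fail as written. First, you invoke the compact-domain removal of singularities, Theorem \ref{annlem}(a), to ``cap off'' the vortex at $\infty \in \bA$. That result concerns a vortex on a punctured disk with a smooth area form, whereas $\omega_{\bA}$ has a pole of order two at infinity, and Remark \ref{extends} explicitly flags that removal of singularities fails there: a finite-energy affine vortex does \emph{not} extend to a vortex on $\P$. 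The correct tool is the affine version, Theorem \ref{annlem2}(d), which gives only that the induced map $Ku: \C \to X/K$ extends continuously with $Ku(\infty) \in X \qu K$, together with decay of the energy density and convergence of the connection, after gauge transformation, to $\lambda \, \d\theta$ with $\exp(\lambda)$ of finite order. That finite-order limit holonomy is exactly why the evaluation at infinity lands in the inertia orbifold $I_{X \qu K}$ and why the limit curve carries the balanced orbifold (twisting) structure at the nodes between finite and infinite scaling --- structure your sketch omits entirely, though it is part of what the limit object must be.

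Second, your mechanism for producing the infinite-scaling components is wrong. You assert that energy escaping to infinity on $\bA$ is ``recovered by recentering at the relevant scale, producing additional finite-scaling affine-vortex components,'' with the quotient components then supplied by removal of singularities plus Proposition \ref{bij}. But translations preserve $\omega_{\bA}$, so recentering only ever yields more affine vortices, and removal of singularities applied to a single finite-energy vortex yields a \emph{point} of $X \qu K$, not a sphere. The $X \qu K$-components arise from the zoom-out regime of the bubble zoology: dilations $z \mapsto R_\nu z$ with $R_\nu \to \infty$, under which the pulled-back area form $R_\nu^2 \, \omega_{\bA}$ diverges, the limit solves the infinite-area equation $u^* P(\Phi) = 0$, and the correspondence of Proposition \ref{bij} then identifies it with a holomorphic sphere in $X \qu K$. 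Without this regime your limit misses, for instance, the nonconstant quotient sphere joining two affine vortices that drift apart at super-linear rates; energy would then be lost in the limit, violating the (Energies converge) condition in the definition of convergence, and the limiting configuration would be neither stable nor a nodal affine vortex. This trichotomy of rescaling regimes (zero, finite, infinite limiting scaling) is precisely the content of Ziltener's analysis that the paper's proof leans on, and it is also what actually forces the monotone zero/finite/infinite ordering you assert.
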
 

The proof of the theorem above combines results of Ziltener
\cite{zilt:phd}, \cite{zilt:qk} who discusses bubbling of affine
vortices and bubbles in $X \qu K$, and Ott \cite{ott:remov}, who
treats bubbling in $X$.   Namely:
\begin{theorem} {\rm (Bubbling analysis for affine vortices)}  
\label{affinevortices} \label{annlem2}
Suppose that $X$ is a Hamiltonian $K$-manifold with proper moment map
convex at infinity equipped with an invariant almost complex structure
$J$.
\begin{enumerate} 
\item {\rm (Energy quantization)} \cite[Lemma D.1]{zilt:phd} There
  exists a constant $E_0 > 0$ such that any non-trivial symplectic
  vortex $(A,u) $ on $\C$ satisfies $E(A,u) > E_0$.
\item \label{al} {\rm (Annulus Lemma)} \cite[Lemma 4.11]{zilt:phd} For every
  compact subset $X_0 \subset X$ and every number $r_0 > 0$ there are
  constants $E_1 > 0 , a >0 $ and $c_1 > 0 $ such that the following
  holds.  Assume that $r_0 \leq r < R \leq \infty$ and $(A,u)$ is a
  vortex on the annulus $A(r,R) = B_R(0)- B_r(0)$ such that $u(z) \in
  X_0$ for every $z \in A(r,R)$, and suppose that $E( (A,u) | A(r,R))
  \leq E_1$.  Then for every $\rho \ge 2$ we have
$$ E( (A,u) |_{A( \rho r, \rho^{-1}R)}) \leq c_1 E(A,u) \rho^{-a} .$$
\item \label{mvt} {\rm (Mean value inequality)} Let $X_0 \subset X$ be
  a compact subset.  Then there exists a constant $E_0 > 0$ such that
  for every $z_0 \in \C$, $r> 0$ and every symplectic vortex $(A,u)$
  satisfying $u(B_r(z_0)) \subset X_0$ and $E( (A,u) | B_r(z_0)) \leq
  E_0$, the energy density $e_{A,u}$ given by the integrand in
  \eqref{energy} satisfies the estimate
$$ e_{A,u}(z_0) := \hh \Vert\d_A u(z_0)\Vert^2 + \Vert \Phi(u(z_0)) \Vert^2
\leq (8/\pi r^2) E((A,u) | B_r(z_0)) .$$
\item {\rm (Removal of singularities)} \cite[Proposition
  D.6]{zilt:phd} \label{k0} Let $(A,u)$ be a finite energy vortex on $\C$.  The
  map $Ku: \C \to X/K$ extends continuously to a map $\P \to X/K$,
  such that $Ku(\infty) \in X \qu K$.  Furthermore,
\begin{enumerate} 
\item there are constants $E > 0, C> 0$ and
  $\delta > 0$ such that the following holds.  For every vortex
  $(A,u)$ on $\C$ and every $R \ge 1$ such that $E(w, \C \bs B_R) < E$
  and every $z \in \C \bs B_{2R}$ we have $ e_{A,u}(z) \leq CR^\delta
  |z|^{-2 - \delta}$;
\item there exist a number $\delta > 0$ such that for $2 \leq p < 4/(2
  - \delta)$, then
$$ x_0 := \lim_{r \to \infty} u(r,0) $$
exists, and there exists a map $k_0 \in W^{1,p}([0,2 \pi ],G)$ such
that if $A_\theta(r)$ denotes the restriction of the connection $A$ in
radial gauge to the circle $\{ |z| = e^r \} \cong S^1$, then
$$ \lim_{r \to \infty} \max_{\theta \in S^1} \d( u(re^{i \theta}),
k_0(\theta) x_0) = 0 .$$
$$ \sup_{ r \ge 0} \Vert \partial_\theta k_0 k_0^{-1} + A_\theta(r)
  \Vert_{L^p(S^1)} e^{(-1 + 2/p + \delta/2) r} < \infty .$$
Necessarily $x_0$ is fixed by $k_{2\pi}$, which since $K$ is compact
and acts locally freely on $\Phinv(0)$, is finite order.
\end{enumerate}
\end{enumerate}  
\end{theorem}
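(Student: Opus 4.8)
The plan is to reduce each of the four parts to the corresponding estimate for vortices with compact domain already recorded in Theorem \ref{annlem}, adapting it to the flat Euclidean area form $\Vol_\bA$ on $\bA$ and to the non-compact domain $\C$. The analytic input throughout is the affine vortex equation $F_A + u^* \Phi\, \Vol_\bA = 0$ together with $\olp_A u = 0$; these form an elliptic system for the pair $(A,u)$, and a Bochner-type computation produces a differential inequality for the energy density $e_{A,u}$. Since $X$ is only assumed convex at infinity rather than compact, I would at each stage restrict to a region on which $u$ takes values in a fixed compact set $X_0 \subset X$, exactly as in the statements of parts \ref{al} and \ref{mvt}.

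I would prove the mean value inequality (part \ref{mvt}) first, as the other three parts rely on it. On the locus where $u \in X_0$ the energy density satisfies an inequality of the shape $\Delta e_{A,u} \ge -c\, e_{A,u}^2$ with $c$ depending only on bounds for the geometry, and a standard Heinz--Schoen mean value argument for nonnegative subsolutions then yields the pointwise bound by the local energy. As remarked after Theorem \ref{annlem}, the constants depend only on bounds on the area form and its first two derivatives, so the flat form on $\bA$ is covered and the estimate is uniform in $r$. Energy quantization (part 1) then follows by a rescaling argument: were there vortices on $\C$ of arbitrarily small positive energy, I would rescale so that $\sup \Vert \d_A u \Vert = 1$ is attained at a point; the mean value inequality bounds all derivatives, so a subsequence converges to a vortex on $\C$ of zero energy, which by the energy-area relation \eqref{energyaction} is covariant constant with $u$ mapping into $\Phinv(0)$, hence trivial, contradicting $\Vert \d_A u \Vert = 1$ at the center. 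For the annulus lemma (part \ref{al}) I would derive a second-order differential inequality for the energy $e(T)$ carried by the circle of radius $e^T$, using the vortex equation to control the radial and angular contributions; integrating this inequality over the annulus gives the polynomial decay $c_1 E(A,u)\rho^{-a}$.

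The hard part will be removal of singularities (part \ref{k0}), and specifically the sharp asymptotics of the connection near infinity. Using the annulus lemma I would first show that $e_{A,u}$ decays, so that $u$ has bounded image at infinity and $Ku$ extends continuously to a map $\P \to X/K$ with $Ku(\infty) \in X \qu K$, giving the limit $x_0 = \lim_{r \to \infty} u(r,0)$. The delicate step is the asymptotic analysis of $A$ in radial gauge: one must produce the limiting loop $k_0 \in W^{1,p}(S^1, G)$ and the decay estimate $\sup_{r \ge 0} \Vert \partial_\theta k_0 k_0^{-1} + A_\theta(r) \Vert_{L^p(S^1)}\, e^{(-1 + 2/p + \delta/2)r} < \infty$. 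This requires combining the annulus decay with a careful choice of gauge and an iteration controlling the holonomy around circles of increasing radius, where the absence of a priori fiberwise compactness makes the estimates subtle; local freeness of the $K$-action on $\Phinv(0)$ is then used to conclude that the limiting stabilizer, which fixes $x_0$ and contains $k_{2\pi}$, is finite. This is precisely the content of Ziltener's analysis, and I would cite \cite[Proposition D.6]{zilt:phd} for the sharp constants while indicating the decay mechanism above.
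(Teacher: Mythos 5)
The first thing to say is that the paper contains no proof of this theorem at all: it is a compendium of results imported verbatim from Ziltener's thesis, as the citations embedded in the statement indicate --- part (a) is \cite[Lemma D.1]{zilt:phd}, part (b) is \cite[Lemma 4.11]{zilt:phd}, part (d) is \cite[Proposition D.6]{zilt:phd} --- with the mean value inequality (c) the affine analogue of \cite[Corollary 2.2]{ott:remov} quoted in the compact-domain Theorem \ref{annlem}. So your proposal is being measured against the cited literature rather than against any argument in the text, and measured that way it is broadly faithful: (c) via the differential inequality $\Delta e_{A,u} \ge -c\, e_{A,u}^2$ and the Heinz mean value trick is exactly the standard mechanism, and your treatment of (d) --- decay from the annulus lemma, then radial-gauge asymptotics, deferring the sharp $L^p$ estimate on $\partial_\theta k_0 k_0^{-1} + A_\theta(r)$ to Ziltener --- mirrors what the paper itself does.

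Three concrete caveats. First, your rescaling proof of (a) is both unnecessary and delicate: dilations are not symmetries of the affine vortex equation (only translations are; the scaling $\lambda$ changes under $z \mapsto cz$), so your rescaled pairs solve vortex equations with degenerating area forms, and the limit may be a holomorphic sphere in $X$ or in $X \qu K$ rather than a vortex --- the contradiction survives, since zero energy forces a constant limit in each regime, but you would have to run the trichotomy. The argument behind \cite[Lemma D.1]{zilt:phd} is a one-liner from your own part (c): if $E(A,u) \le E_0$, then for every $z_0$ and \emph{every} $r > 0$ the hypothesis of (c) holds on $B_r(z_0)$, so $e_{A,u}(z_0) \le (8/\pi r^2) E_0 \to 0$ as $r \to \infty$, whence $e_{A,u} \equiv 0$ and the vortex is trivial. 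Second, in both (a) and (d) there is no compact-image hypothesis in the statement, so before (c) can be applied you must invoke properness of $\Phi$ and the convexity condition \eqref{convex} to confine the image of a finite-energy vortex to a fixed compact set, as in \cite{ci:symvortex}; your remark about ``restricting to regions where $u \in X_0$'' presupposes rather than supplies this $C^0$ bound. Third, for (b) the mechanism in the literature --- and the one this paper itself runs when it extends the decay to admissible area forms in Lemma \ref{bconnect} --- is first order, not second order: the energy $\eps(s)$ of the sub-annulus satisfies $\eps(s) \le c\,(-\eps'(s))$ via the energy-action identity and the isoperimetric inequality for the invariant symplectic action \cite{zil:decay}, giving exponential decay in $\log \rho$; your proposed second-order inequality for the energy on circles is not how the cited proof goes, though a correct variant could likely be extracted.
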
 

\begin{remark}  \label{extend}
The compactness argument for some of the moduli spaces below uses a
slightly more general argument than that stated in Ziltener
\cite{zilt:qk} and Ott \cite{ott:remov}.  Namely, multiples of the
standard area form on the projective line gives rise to area forms in
local coordinates
\begin{equation} \label{resc} 
\lambda(s,t) \ \d s \wedge \d t = (1 + \eps(s^2 + t^2))^{-2} \ \d s
\wedge \d t \end{equation}
for $\eps > 0$.  We will need the (Annulus Lemma) \eqref{al} above and
the associated exponential decay of the derivative of a vortex for
this area form.  To see that the result still holds, we remark that
Ziltener's results on the invariant symplectic action in
\cite{zil:decay} hold for any area form $ \lambda \d s \wedge \d t $
satisfying the following property: Let
$$ m = \inf \{ \Vert \xi_X(x) \Vert \ | \ x \in X, \ \Vert \xi \Vert = 1 \} $$
denote the minimal length of generating vector fields for Lie algebra
vectors of length one.  We say that $\lambda(s,t) \d s \d t$ is {\em
  admissible} for some constant $a > 0 $ iff
\begin{equation} \label{admissible}  \lambda \ge \frac{2\pi}{am}, \quad \sup | d( \lambda^{-1}) |^2 + | \Delta
(\lambda^{-2})| < 2m^2 \end{equation}
see Ziltener \cite[Equation 1.7]{zil:decay}, in particular, for the
area form \eqref{resc}.  The mean value inequality in Ott
\cite[Section 2]{ott:remov} holds uniformly in $\eps$ for the family
of area forms \eqref{resc}.  Thus the inequality in the (Annulus
Lemma) implies an inequality for the covariant derivative of the map
away from the ends.
\end{remark} \vskip .1in 

In the case that $K$ acts freely on $\Phinv(0)$, any finite energy
vortex $(A,u)$ on $\C$ has a well-defined {\em evaluation at infinity}
$\ev_\infty(A,u) \in X \qu K$, given by the limit of $u(s + i t)$
along any ray $ s + i t = re^{i \theta}, r \to \infty$.  More
generally in the locally free case, $\ev_\infty(A,u) = [k_0(2
  \pi),x_0]$ lies in the {\em inertia orbifold}
$$ I_{X \qu K} \in \{ (k,x) \in K \times \Phinv(0) \ | \ kx = x \} /K $$
which, for example, appeared in Kawasaki \cite{ka:ri}.  The orbifold
case was not treated in Ziltener \cite{zilt:phd}; however, the proof
is almost the same as the manifold case.

\begin{remark}   \label{extends}
{\rm (Failure of removal of singularities)} The area form
$\omega_{\bA}$ has a pole 
%CW
at infinity and so one cannot expect an extension of $(A,u)$ to the
projective line to satisfy the vortex equations. Instead, taking $P$
to be a orbi-bundle on $\P^1$ given by gluing in the trivial bundle
using transition map $z^\lambda$, the pair $(A,u)$ extends to a gauged
holomorphic map on $\P$ mapping $\infty$ to $\Phinv(0)$
\cite{venuwood:class}.  However, the extension will not satisfy any
vortex equation on the entire projective line $\P$.
\end{remark}  \vskip .1in

%Using this extension we define the {\em rational homology class}
%$[A,u] \in H_2^K(X,\Q)$ of an affine vortex $(A,u)$ as the homology
%class associated to the extended orbi-bundle $P$ and section $\ovl{u}:
%\P \to P(X)$.  Indeed after passing to a finite cover of $\P$, the
%orbi-bundle $P \to \P$ has a classifying map and the homology class
%$[A,u}$ is the image of the fundamental class of $\P$ under the map to
%$X_K$ given by taking a classifying map for the bundle and section.
%It satisfies $E(A,u) = ([A,u], [\omega_{X,K}])$, by the expression as
%the integral of $\omega_{P(X),A}$, which extends over $C$ since $A$
%extends over $P$.  
%\end{remark} 

Let $\ovl{M}_{n,1}^K(\bA,X)$ resp. $\ovl{M}_{n,1}^{K,\fr}(\bA,X)$ denote the
moduli space of isomorphism classes of nodal scaled affine vortices to
$X$, resp. the moduli space of isomorphism classes of framed nodal
scaled affine vortices to $X$.  $\ovl{M}_{n,1}^\fr(\bA,X)$ admits an
evaluation maps at the markings, and, as explained in \cite{zilt:phd}
an additional evaluation map at infinity to $I_{X \qu K}$:
$$ \ev^\fr \times \ev_\infty : \ovl{M}^{K,\fr}_{n,1}(\bA,X) \to X^{n}
\times I_{X \qu K} .$$
If $K^{n}$ acts freely, combining this map with a classifying map
gives a map
$$ \ev \times \ev_\infty: \ovl{M}_{n,1}^K(\bA,X) \to X_K^{n} \times I_{X
  \qu K} .$$
If $K^n$ only acts locally freely, then the map above exists as a
morphism of stacks, or after passing to a classifying space for the
groupoid $\ovl{M}_{n,1}^K(\bA,X)$, which has the same rational
cohomology as $\ovl{M}_{n,1}^K(\bA,X)$.

\begin{remark} The case that $X$ and $K$ are trivial gives the complexified
multiplihedron $\ovl{M}_{n,1}$ constructed in Section \ref{ziltener}.
Indeed, each irreducible component $C_i$ with finite or non-zero scaling is
equipped with a isomorphism with the affine line, unique up to
translation, and therefore a scaling $\lambda_i$.  Thus the underlying
curve of any affine vortex is automatically a (possibly unstable)
scaled affine curve in the sense of Section \ref{ziltener}.
\end{remark}  \vskip .1in

%\begin{proposition} {\rm (Collapsing morphism for affine vortices)} 
%Collapsing the unstable components gives rise (for $n > 0$) to a
%continuous forgetful map to the moduli space of scaled lines
%$f:\ovl{M}_{n,1}^K(\bA,X) \to \ovl{M}_{n,1}(\bA) .$
%%
%\end{proposition}  

%The forgetful map is defined by forgetting all bubbles with only one
%special point, replacing bubbles with degenerate scaling and one node
%and with one marking with a marking at the node of the adjacent
%component, and replacing bubbles with degenerate scaling and two nodes
%with a single node, repeating until one obtains a stable marked scaled
%affine curve.  An algebraic treatment (which in particular, shows that
%the resulting map is continuous) is given in Example
%\ref{stacksofschemes}.

Let $M_{n,1}^K(\bA,X)$ denote the moduli space of isomorphism classes
of finite energy $n$-marked vortices on $\bA$ (the additional marking
at infinity) with values in $X$.  By combining the sequential
compactness theorem \ref{affinecompact} with local distance functions
as in \eqref{distfns}, one has:

\begin{theorem} {\rm (Properness of the moduli space of affine vortices)}
Suppose that $X$ is a Hamiltonian $K$-manifold with proper moment map
convex at infinity, equipped with an invariant compatible almost
complex structure.  $\ovl{M}_{n,1}^K(\bA,X)$ is Hausdorff and the
energy map $E: \ovl{M}_{n,1}^K(\bA,X) \to [0,\infty)$ is proper.
\end{theorem}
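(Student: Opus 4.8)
The plan is to follow the template already used for the compact-domain moduli space $\ol{M}_n^K(C,X)$: I would reduce the statement to two assertions, namely that the topology is Hausdorff (limits of convergent sequences are unique) and that $E^{-1}([0,R])$ is sequentially compact for every $R > 0$, and then invoke the sequential compactness result Proposition \ref{affinecompact} together with continuity of the energy under convergence. Since the topology on $\ol{M}_{n,1}^K(\bA,X)$ is defined through the notion of convergence of nodal affine vortices, the essential point is to produce a local distance function whose vanishing is equivalent to that convergence; first countability, metrizability, and the Hausdorff property then all follow from such a function, exactly as in \cite[Lemma 5.6.5]{ms:jh}.

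First I would construct the distance function. In analogy with \eqref{distfns}, for two stable nodal affine vortices with the same underlying bundle I would set
$$ \dist_\eps = \inf_{k \in \K(P)} \Vert k\cdot A_1 - A_0 \Vert_{L^2} + \dist_\eps^0([(\hat{C}_0, u_0)],[(\hat{C}_1, k\cdot u_1)]) + \dist_{\P}(\lambda_0, \lambda_1), $$
combining the $L^2$ gauge-metric on connections, the Gromov distance $\dist_\eps^0$ on the underlying stable map (measured with the Yang-Mills-Higgs energy on small balls around the nodes), and the distance between the scaling forms viewed as sections of $\P(\omega_C \oplus \C)$. The key local claim, following Ott \cite{ott:remov} and elliptic regularity for vortices, is that a sequence converges in the sense of the definition of convergence of nodal affine vortices if and only if $\dist_\eps \to 0$ for $\eps$ sufficiently small. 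Uniqueness of limits, hence the Hausdorff property, is then immediate.

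Next, for properness, I would fix $R > 0$ and show $E^{-1}([0,R])$ is compact. The energy-quantization part of Theorem \ref{affinevortices} gives a universal lower bound $E_0 > 0$ on the energy of any nontrivial affine vortex or sphere bubble, so a bound $E \le R$ forces a bound on the number of bubble components and hence restricts to finitely many combinatorial types $\Gamma$. Given any sequence in $E^{-1}([0,R])$, its energy is bounded, so Proposition \ref{affinecompact} yields a convergent subsequence; the energy-convergence condition in the definition of convergence shows the limit again has energy $\le R$, so it lies in $E^{-1}([0,R])$. Because the local distance functions $\dist_\eps$ make the topology metrizable (the argument of \cite[Chapter 5]{ms:jh} applies once the ends are controlled), sequential compactness of $E^{-1}([0,R])$ coincides with its compactness, which is precisely properness of $E$.

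The hard part will be the behavior at infinity of the affine line, which has no analogue in the compact-domain case. The area form $\omega_\bA$ has a double pole at $\infty$, so one cannot simply extend across infinity; instead the distance function must incorporate the exponential decay and the limiting holonomy supplied by the removal-of-singularities part of Theorem \ref{affinevortices}, so that the evaluation at infinity $\ev_\infty$ into the inertia orbifold $I_{X \qu K}$ is continuous and controlled. I would use the admissibility estimate of Remark \ref{extend}, which guarantees that the annulus lemma and the resulting exponential decay of the derivative hold uniformly for the rescaled area forms \eqref{resc}, to verify that the distance contributions from the ends tend to zero along convergent sequences and that no energy escapes to infinity. Checking that this end-behavior is compatible with the finitely many degenerating scalings — including the limiting value of $\ev_\infty$ in the locally free (orbifold) case, where $\exp(\lambda)$ has finite order — is where the argument requires the most care.
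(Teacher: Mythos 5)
Your proposal is correct and takes essentially the same route as the paper, which derives this theorem in a single line by combining the sequential compactness result (Proposition \ref{affinecompact}) with local distance functions as in \eqref{distfns}, exactly your reduction. The additional details you supply --- the scaling term in the distance function and the control of the end at infinity via the annulus lemma, removal of singularities, and the admissibility estimate of Remark \ref{extend} --- are consistent with the supporting results (Theorem \ref{affinevortices}) that the paper relies on.
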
  

\subsection{Stable maps to orbifolds} 

In order to understand bubbling in the infinite area limit we briefly
review the definition of a stable maps for orbifold targets, studied
in Chen-Ruan \cite{cr:orb}, which appear as bubbles in the definition
of certain gauged maps.  The corresponding algebraic theory by
Abramovich-Graber-Vistoli \cite{agv:gw} will be reviewed in
\cite[Section 5]{qk2}.  Recall that an {\em orbifold structure} on a
topological space $C$ is a proper \'etale groupoid together with a
homeomorphism of the space of objects to $C$. An {\em nodal orbifold
  structure} on a nodal complex curve $C$ is an orbifold structure on
its normalization, such that the automorphism group of any nodal point
is independent of the choice of irreducible component containing it.
A nodal orbifold structure is a {\em twisting} if each point with
non-trivial automorphism is either a node or marking, and for each
node, the orbifold charts satisfy the following {\em balanced
  condition}: the chart on one side of the node is of the form
$U/\mu_r$ for some $r$ where $\mu_r$ acts on $U \subset C$ a
neighborhood of $0$, while the orbifold structure on the other side is
$U/\mu_r$ with the conjugate action by $\exp( - 2 \pi i /r)$.

Let $Y$ be a compact symplectic orbifold equipped with a compatible
almost complex structure $J$.  A {\em stable map} to $Y$ is a complex
nodal curve $C$ equipped with a twisting and a representable
pseudoholomorphic orbifold map $u: C \to Y$. Representability means
that the map $u$ is smooth after passing to \'etale cover in $Y$: If
$z \in C$ and $u(z) \in Y$ has automorphism group $\Aut(u(z))$, then
there exists an orbifold chart $U/\Aut(u(z))$ for $Y$ near $u(z)$ so
that the groupoid fiber product $ u^{-1}(U/\Aut(z))
\times_{U/\Aut(u(z))} U$ is an orbifold chart for $z$ and $u$ has a
smooth local lift $\ti{u}$ to $U$ given by projection on the second
factor.  In particular, $\Aut(z)$ injects into $\Aut(u(z))$.  The
notion of Gromov convergence of twisted pseudoholomorphic maps which
generalizes Gromov convergence of stable pseudoholomorphic maps.  Let
$\ovl{M}_{g,n}(Y)$ denote the space of isomorphism classes of
connected, genus $g$, $n$-marked stable maps to $Y$.

\begin{theorem} \label{orbcompact}  {\rm (Properness of moduli spaces of stable 
maps to orbifolds)} Let $Y$ be a compact symplectic orbifold equipped
  with a compatible almost complex structure.  $\ovl{M}_{g,n}(Y)$ is
  Hausdorff and the energy map $E: \ovl{M}_{g,n}(Y) \to [0,\infty)$ is
    proper.
\end{theorem}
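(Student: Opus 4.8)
The plan is to reduce the statement to the Gromov compactness theorem for pseudoholomorphic maps into a symplectic manifold, proved in McDuff-Salamon \cite{ms:jh}, by working in local uniformizing charts and exploiting representability of the maps. Since $Y$ is compact, fix an invariant compatible metric and cover $Y$ by finitely many orbifold charts $U_i / G_i$ with $U_i$ a manifold and $G_i$ a finite group acting by isometries. Because a stable map $u : C \to Y$ is representable, over each chart it lifts, after passing to the \'etale groupoid fibre product, to an honest $J$-holomorphic map from a smooth local cover of $C$ into the manifold $U_i$. Thus all of the local a priori estimates, the energy quantization lemma, and the mean value inequality that drive Gromov compactness hold verbatim in charts, with constants depending only on the finitely many $(U_i,G_i)$ and on the energy bound.

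First I would prove sequential compactness of the underlying (coarse) maps. Given a sequence $[(C_\nu,u_\nu,\ul{z}_\nu)]$ with $E(u_\nu) \le A$, the induced maps to the coarse space $|Y|$ have uniformly bounded energy, and the standard bubbling analysis of \cite{ms:jh} applies within charts: at each point where the derivative blows up one rescales to extract a bubble, removal of singularities extends the bubble across the puncture, and energy quantization bounds the number of bubbles. This produces, after passing to a subsequence, Gromov convergence of the underlying maps to a stable map from a nodal curve into $|Y|$.

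The orbifold-specific content is to upgrade this coarse limit to a twisted stable map to $Y$, and this is where the main difficulty lies. At each node that forms in the limit the image point acquires an isotropy group, and the local lift of $u_\nu$ near the degenerating neck has a well-defined monodromy element in that group; this monodromy is the twisting datum at the node. The point is that this monodromy is \emph{discrete} and hence locally constant along the convergent sequence, so it passes to the limit and determines a canonical twisting on the limit curve. The balanced condition of the twisting is forced because it holds for each $u_\nu$ and only involves the finite monodromy data, and representability of the limit follows because each local lift is obtained as a limit of smooth lifts. One must also check that marked points carry compatible isotropy, which is again a matter of the upper semicontinuity and discreteness of the finite isotropy data. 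The careful matching of monodromy at the forming nodes, and the verification that the resulting twisted map is representable, is the technical heart and is carried out in Chen-Ruan \cite{cr:orb}.

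Finally, the Hausdorff property follows from uniqueness of Gromov limits. In the manifold case two Gromov limits of the same sequence are isomorphic by \cite{ms:jh}; here the additional orbifold structure is rigid, since a representable map is determined by its underlying map together with the discrete lifting data computed above, so the limit twisted stable map is unique up to isomorphism. Combining sequential compactness with uniqueness of limits shows that $E^{-1}([0,A])$ is compact for every $A$, which is the asserted properness, and Hausdorffness.
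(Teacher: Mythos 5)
Your proposal is correct and follows essentially the same route as the paper, whose entire proof is a citation to Chen-Ruan \cite{cr:orb} for the modifications needed in the manifold-target Gromov compactness argument of \cite{ms:jh}. Your writeup usefully spells out what those modifications are (lifting in uniformizing charts via representability, extracting the discrete nodal monodromy as the twisting datum, and uniqueness of limits for Hausdorffness), but like the paper it defers the technical heart --- matching monodromy at forming nodes and representability of the limit --- to the same reference, so the two arguments coincide in substance.
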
 

A proof of this theorem is sketched by Chen-Ruan \cite{cr:orb}, who
list the necessary changes in the proof of the compactness of the
moduli space of stable maps for manifold targets.

\subsection{Vortices with varying scaling} 

The adiabatic limit Theorem \ref{largearea} will be proved by studying
a moduli space of curves with {\em varying scaling} which interpolates
between the moduli space of vortices for a fixed area form $\Vol_C$
and the infinite area limit.  More precisely, Theorem \ref{largearea}
will follow from a divisor class relation in the source moduli space
constructed in Section \ref{compose}, just as associativity of the
quantum product follows from a divisor class relation in the moduli
space of stable curves.

First we construct the objects that appear in the infinite area
limit. Let $X$ be a Hamiltonian $K$-manifold with proper moment map,
so that $X \qu K$ is a locally free quotient and so an orbifold.

\begin{definition} {\rm (Nodal infinite area vortices)}   An $n$-marked {\em nodal infinite area vortex} consists of the following
datum:
\begin{enumerate} 
\item 
(Stable map to the quotient) a stable $r$-marked pseudoholomorphic map
  $u: C_0 \to C \times X \qu K$ of class $([C],d_0)$ for some $d_0 \in
  H_2(X \qu K)$;
\item (Affine vortex bubbles) for each marking $z_j \in C_0$ a stable
  $i_j$-marked affine vortex $(C_j,A_j,u_j,\ul{z}_j)$ with markings
  $\ul{z}_j = (z_{j,1},\ldots, z_{j,i_j})$ and orbifold structure of
  $C_j$ at the point at infinity $z_{j,0}$ matching the orbifold
  structure of $C_0$ at $z_{0,j} \in C_0$, so that the union $C_0 \cup
  C_1 \cup \ldots \cup C_r$ is a balanced orbifold curve;
\item (Matching condition) the value $u_i(z_{i,0}) =
  p_2(u_i(z_{i,0}))$ in $X \qu K$, where $p_2: C \times X \qu K \to X
  \qu K$ is the projection on the second factor.
\end{enumerate}
 An {\em isomorphism} of nodal infinite area vortices is a combination
 of an automorphism of the underlying curves intertwining the scalings
 and a bundle isomorphism on the affine vortex bubbles, intertwining
 the markings, scalings, connections and maps.
\end{definition} \vskip .1in 

\begin{remark}  We denote by $C$ the nodal curve obtained by gluing together the
curves $C_0$ and $C_i$ at $(z_i,z_{0,i})$; the matching condition
means that the orbifold structures glue together to a twisting of $C$.
Removal of singularities for vortices in Remark \ref{extends} implies
that orbi-bundles $P_j$ defined by the limit of the connections at
infinity glue together with the bundle $P_0 \to C_0$ given by
pull-back of $\Phinv(0) \to X \qu K$, to give a bundle $P \to C$.  The
matching condition then implies that the section $u_0$ over $C_0$ and
$u_j$ over $C_j$ glue together to a section of $P(X)$ over $C$ with
fairly weak regularity properties at the nodes with infinite scaling,
so that $u$ takes values in the zero level set $\Phinv(0)$ on the
subset of $C$ with infinite scaling.  An isomorphism of nodal
symplectic vortices is then an automorphism of the domain intertwining
the connections (singular at the nodes with infinite scaling), maps,
scalings, and markings.
\end{remark} \vskip .1in 

Let $\ovl{M}_n^K(C,X)_\infty$ denote the moduli space of isomorphism
classes of nodal infinite-area vortices.  From the description above,
$\ovl{M}_n^K(C,X)_\infty$ is the union of fiber products
$$ \ovl{M}_r^K(C,X)_\infty \times_{(I_{X \qu K})^r} \prod_{j=1}^r
\ovl{M}_{|I_j|,1}^K(\bA,X) $$
over unordered partitions $[I_1,\ldots,I_r]$ of $\{1,\ldots, n\}$.

We combine the moduli spaces above into a moduli space of vortices
with varying area form.  Let $\Vol_C \in \Omega^2(C,\R)$ be an area
form.  

\begin{definition} 
\begin{enumerate} 
\item {\rm (Two-form corresponding to a scaling)} Any scaled curve
  $(v: \hat{C} \to C, \lambda: \hat{C} \to \P( T^\dual_v \oplus \C))$
  in the sense of Definition \ref{scaledcurves} defines a volume form
  $\Vol_C(v,\lambda) \in \Omega^2(\hat{C},\R \cup \{ \infty \})$ as
  follows.
\begin{enumerate} 
\item If $\lambda$ is finite on the principal component, then
  $\Vol_C(v,\lambda)$ is $|\lambda|^2$ times the form $\Vol_C$ on the
  principal component, and on the bubble component $\Vol_C(v,\lambda)$
  vanishes.
\item If $\lambda$ is infinite on the principal component, then
  $\Vol_C(v,\lambda)$ is equal to $\lambda \wedge \ovl{\lambda}$ on
  each bubble tree.
\end{enumerate} 
\item  
 {\rm (Scaled vortices)} A {\em marked scaled vortex} with domain $C$
 and target $X$ consists of a marked scaled curve $(v: \hat{C} \to C,
 \lambda: \hat{C} \to \P( T^\dual_v \oplus \C))$ together with a
 vortex on $\hat{C}$ corresponding to the area form
 $\Vol_C(v,\lambda)$.  That is,
\begin{enumerate} 
\item If $\lambda$ is finite on the principal component, then a vortex
  $v_0$ on the principal component corresponding to the area form
  $\Vol_C(v,\lambda) | C_0$ and a collection of sphere bubbles
  $v_i:C_i \to X$ for each component $C_i \subset \hat{C}, i \neq 0$;
\item If $\lambda$ is infinite on the principal component, then a
  holomorphic map $u_0: C_0 \to X \qu K$ on the principal component
  $C_0$ and a collection of nodal affine vortices on the bubble trees
  attached to $C_0$;
\end{enumerate} 
Both should satisfy natural matching conditions at the nodes.  A
marked scaled vortex is {\em polystable} if each non-principal
component with finite, non-zero scaling (resp. zero or infinite
scaling) has at least $2$ (resp. $3$) special points.  The notion of
{\em isomorphism} of marked scaled vortices, using gauge
transformations and automorphisms of the domain, is left to the
reader.  A marked scaled vortex is {\em stable} if it is polystable
and has no infinitesimal automorphisms.
\end{enumerate} 
\end{definition} \vskip .1in 

We denote by $\ovl{M}_{n,1}^K(C,X)$ the moduli space of isomorphism
classes of marked scaled vortices, and by $\ovl{M}_{n,1}^K(C,X,d)$ the
component with homology class $d \in H_2^K(X,\Z)$.  The notion of
convergence of vortices extends naturally to the case of varying
scaling, and defines a topology on $\ovl{M}_{n,1}^K(C,X)$.

\begin{theorem}   \label{scaledproper} 
 {\rm (Properness of the moduli space of scaled symplectic vortices)}
 Let $X$ be a Hamiltonian $K$-manifold with proper moment map convex
 at infinity equipped with a compatible almost complex structure, such
 that $X \qu K$ is a locally free quotient.  Then
 $\ovl{M}_{n,1}^K(C,X)$ is Hausdorff and the energy map $E:
 \ovl{M}_{n,1}^K(C,X) \to [0,\infty)$ is proper.
\end{theorem}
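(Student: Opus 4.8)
The plan is to deduce properness of the energy map from sequential compactness of its bounded sublevel sets, following the pattern of Theorem \ref{compactnessforvortices} and Proposition \ref{affinecompact}. Fix $E_0 > 0$ and a sequence $[(v_\nu,\lambda_\nu,A_\nu,u_\nu,\ul{z}_\nu)]$ of stable scaled vortices with $E \le E_0$; the goal is to extract a convergent subsequence. After passing to a subsequence, the combinatorial type of the underlying scaled curves stabilizes (there are finitely many types with energy below $E_0$, by energy quantization), and the scaling $\lambda_\nu$ of the principal component converges in $\P \cong \ol{\Omega}^1(\bA,\C)^\C$ to some $\lambda_\infty$. The argument then splits according to whether $\lambda_\infty$ is finite or infinite.

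In the finite case the area form $\Vol_C(v_\nu,\lambda_\nu)$ on the principal component converges to the (possibly zero) limiting form $|\lambda_\infty|^2 \Vol_C$, and the problem reduces to sequential compactness of vortices for a fixed area form, which is Theorem \ref{compactnessforvortices}; since the forms vary with $\nu$ one uses that they lie in the admissible family \eqref{resc}, so the annulus lemma and mean value inequality of Theorem \ref{annlem} hold uniformly by Remark \ref{extend}, and bubbling of holomorphic spheres in $X$ is controlled exactly as in Ott \cite{ott:remov}. In the case $\lambda_\infty = \infty$ the area form blows up on the principal component and we are in the adiabatic (large-area) regime analysed by Gaio-Salamon \cite{ga:gw}: where the covariant derivative stays bounded the principal component converges to a holomorphic map to $X \qu K$, as in the proof of Proposition \ref{bij}, while at each bubbling sequence $z_\nu$ with $\Vert \d_{A_\nu} u_\nu(z_\nu)\Vert \to \infty$ one performs soft rescaling, and the bubble-zoology dichotomy produces, according to the limiting value of $\eps_\nu = \rho_\nu/c_\nu$, a holomorphic sphere in $X \qu K$, an affine vortex on $\bA$, or a holomorphic sphere in $X$.

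The limit object is assembled by the usual bubble-tree induction of Ott \cite{ott:remov} and Ziltener \cite{zilt:phd}: energy quantization (Theorems \ref{annlem} and \ref{affinevortices}) bounds the number of bubbles, and the annulus lemma --- valid uniformly over the degenerating family of area forms by the admissibility estimate \eqref{admissible} of Remark \ref{extend} --- guarantees that no energy escapes into the connecting necks, so that the energy of the limit equals $\lim_\nu E$. Compactness of the affine-vortex bubbles is Proposition \ref{affinecompact}, and compactness of the resulting stable maps to the orbifold $X \qu K$, together with the twisting structure at the nodes with infinite scaling, is Theorem \ref{orbcompact}; removal of singularities (Remark \ref{extends}) identifies each affine-vortex end with a point of $I_{X \qu K}$ and glues the orbi-bundles across the nodes, so the matching conditions of Definition \ref{scaledcurves} hold in the limit, and stability is inherited because every ghost component retains the requisite number of special points. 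Hausdorffness is then proved, as for the fixed-area moduli spaces, by constructing local distance functions of the form \eqref{distfns} --- combining the $L^2$-metric on connections with the stable-map distance of \cite[p.~134]{ms:jh} --- and checking that convergent sequences have unique limits, following \cite[Lemma~5.6.5]{ms:jh}.

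The main obstacle I anticipate is the uniform neck analysis bridging the two bubbling regimes: when $\lambda_\infty = \infty$ one must simultaneously control bubbling in $X$ (governed by Ott's estimates for a fixed area form) and bubbling into affine vortices and into $X \qu K$ (governed by Ziltener's estimates for the degenerating area form), and show that these occur on separated scales so that the soft-rescaling induction terminates with no energy lost in the annular regions. This rests on the annulus lemma holding with constants independent of $\eps$ along the family \eqref{resc}, which is precisely the content of Remark \ref{extend}; making the bookkeeping of the nested bubbling scales precise, and verifying that the limiting combinatorial type is a \emph{stable} scaled vortex, is where the genuine work lies.
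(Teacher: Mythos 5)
Your proposal follows essentially the same route as the paper's proof: the dichotomy between scalings bounded away from infinity (handled by Ott's compactness) and scalings tending to infinity (handled by the Gaio--Salamon adiabatic analysis with its bubble zoology in $X$, $X \qu K$, and affine vortices), assembly by soft rescaling with energy quantization, orbifold stable-map compactness and removal of singularities at infinite scaling, and Hausdorffness via local distance functions as in \eqref{distfns}. The ``main obstacle'' you correctly identify --- the neck estimate bridging the two bubbling regimes, uniform over the degenerating family of admissible area forms --- is exactly what the paper isolates as Lemma \ref{bconnect}, its extension of Ziltener's exponential decay \cite[Theorem 1.3]{zil:decay} to cylinders with admissible area forms, so your diagnosis of where the genuine work lies matches the paper's own resolution.
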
 

The proof depends on an extension of the results of Ziltener
\cite{zilt:qk}, \cite{zil:decay} and Ott \cite{ott:remov} to the case
of vortices with varying scaling.  We need the following extension of 
\cite[Theorem 1.3]{zil:decay}:

\begin{lemma} \label{bconnect} 
For every constant $\eps > 0 $, there exist constants $C,\delta > 0$
such that if $(A,u)$ is a vortex on the cylinder $\Sigma = [-S,S]
\times S^1$ with respect to an admissible area form as in
\eqref{admissible} with respect to some constant $a$ with $E(A,u) <
\delta$, then the energy density $e_{A,u}$ satisfying for $(s,t) \in
      [-S + 1, S - 1] \times S^1$
$$e_{A, u}(s,t) < C E(A,u) \lambda^{-2} e^{(-2 \pi/a + \eps)s} .$$
\end{lemma}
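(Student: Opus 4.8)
The plan is to follow the scheme of Ziltener's decay estimate \cite[Theorem 1.3]{zil:decay}, whose two pillars are an integrated energy-decay estimate on sub-cylinders and a mean value inequality converting a local energy bound into a pointwise bound on the energy density. The only genuinely new feature is that the area form is merely admissible in the sense of \eqref{admissible} rather than standard; the point of Remark~\ref{extend} is precisely that both pillars survive, with constants depending only on $\eps$ and the admissibility constant $a$. So the proof will consist of assembling these two extended ingredients, after first fixing $\eps > 0$ and producing $C,\delta$ in terms of it.

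First I would confine the image of $u$ to a fixed compact set $X_0 \subset X$. Because $X$ is convex at infinity, the function $f$ of \eqref{convex} is subharmonic along $u$ outside a compact set, so the small-energy hypothesis $E(A,u) < \delta$ together with the maximum principle (as in the removal-of-singularities part \eqref{k0} of Theorem~\ref{annlem2}) forces $u(\Sigma) \subset X_0$. This is what places us in the regime where both the Annulus Lemma \eqref{al} and the mean value inequality \eqref{mvt} apply, since each requires the target of $u$ to lie in a compact set.

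Next I would pass to a planar annulus via $w = e^{2\pi(s+it)}$, under which $\Sigma = [-S,S]\times S^1$ becomes $A(e^{-2\pi S}, e^{2\pi S})$ and the admissibility bounds on $\lambda$ are preserved. The decay itself comes from the isoperimetric inequality for the invariant symplectic action of the loop $t \mapsto (A,u)(s,\cdot)$: admissibility \eqref{admissible} feeds the mass $m$ into a differential inequality for the action, and since the energy flux across the circle at parameter $s$ is the derivative of the action, integrating from $s$ to $S$ yields a one-sided tail bound of the form $E((A,u)|_{[s,S]\times S^1}) \le C' E(A,u)\, e^{(-2\pi/a+\eps/2)s}$. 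This is equivalent to iterating the Annulus Lemma \eqref{al} on nested sub-annuli, whose geometric ratio $\rho^{-a}$ with $\rho = e^{2\pi}$ produces the exponential rate $2\pi/a$, the factor $2\pi$ being exactly the conformal factor of $w = e^{2\pi(s+it)}$. (The bound is genuinely one-sided: near the left end $s = -S+1$ it is weak but still a valid upper bound, while near the right end it is strong.)

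Finally I would invoke the mean value inequality \eqref{mvt} on the ball $B_{1/2}(s,t)$, which for interior $(s,t) \in [-S+1,S-1]\times S^1$ lies in the region controlled by the tail estimate; this gives $e_{A,u}(s,t) \le C\,\lambda^{-2}\,E((A,u)|_{B_{1/2}(s,t)}) \le C E(A,u)\,\lambda^{-2}\,e^{(-2\pi/a+\eps)s}$. Since the constant in \eqref{mvt} depends only on bounds for $\lambda$ and its first two derivatives, and $e_{A,u}$ is the intrinsic density while $E$ carries the area-form weight, rewriting one against the other is what produces the factor $\lambda^{-2}$. I expect the main obstacle to be precisely the uniformity of all constants over the whole family of admissible area forms rather than a single fixed one, and in particular verifying that the admissibility constant $a$ is exactly what governs the decay rate $2\pi/a$ in the action estimate; this is the substance of Ziltener's argument \cite{zil:decay} under hypothesis \eqref{admissible}, together with the careful bookkeeping of the $\lambda$-weights throughout.
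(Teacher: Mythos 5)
Your proposal follows the paper's proof in its essential mechanism. The two pillars are the same: for $\delta$ small the invariant symplectic action $\cA$ of the loops $(A,u)(s,\cdot)$ is well-defined and satisfies the energy-action identity $E((A,u)|_{[s,s']\times S^1}) = \cA((A,u)(s,\cdot)) - \cA((A,u)(s',\cdot))$, the admissibility condition \eqref{admissible} feeds into the isoperimetric inequality to give the differential inequality $\dds E((A,u)|_{[-S+s,S-s]\times S^1}) \le -(2\pi/a - \eps)E(s)$ as in \cite[p.~46]{zilt:qk}, and integrating gives the exponential decay of energy on sub-cylinders; the pointwise bound then comes from the small-energy pointwise estimate $e_{A,u}(s,t) \le \tfrac{32}{\pi} E((A,u)|_{B_1(s,t)})$ of \cite[Lemma 3.3]{zil:decay}, whose proof (as the paper observes) requires only small energy on the ball rather than a vortex on a half-cylinder near infinity, with Remark \ref{extend} supplying uniformity of the constants over the admissible family. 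Your hedge between the action differential inequality and iterating the Annulus Lemma after the conformal change $w = e^{2\pi(s+it)}$ is harmless; the paper takes the former route, which is cleaner because it stays in cylinder coordinates and avoids re-verifying admissibility of the transformed area form.

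The one step that would fail is your opening confinement $u(\Sigma) \subset X_0$. Convexity at infinity makes $f \circ u$ subharmonic only where $u$ leaves a compact set, and on the bounded cylinder $\Sigma = [-S,S]\times S^1$ the maximum principle merely forbids interior maxima of $f \circ u$ above the threshold: the maximum can perfectly well sit on $\partial \Sigma$, and nothing in the hypotheses --- in particular not the smallness of $E(A,u)$ --- controls the boundary values, so no compact $X_0$ depending only on $\eps$ and $a$ exists. (The confinement implicit in \eqref{k0} works on $\C \setminus B_R$ because behavior at infinity is anchored near $\Phi^{-1}(0)$; there is no analogue here.) The gap matters only because you route both pillars through the compact-$X_0$ statements \eqref{al} and \eqref{mvt} of Theorem \ref{annlem2}. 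The paper avoids the issue entirely: the action estimates of \cite{zil:decay} hold for any admissible area form because the constant $m$ in \eqref{admissible} is the global infimum over $X$ of the lengths of generating vector fields, and the pointwise estimate \cite[Lemma 3.3]{zil:decay} (equivalently, Ott's mean value inequality, which per Remark \ref{extend} is uniform in the family of area forms and requires no compact containment under the standing convexity hypotheses) needs only small energy on $B_{1/2}(s,t)$. So the repair is simply to delete the confinement step and cite the cylinder versions directly, as the paper does.
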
 

\begin{proof}  We assume that $(A,u),\eps,a$ are as in the statement 
of the Lemma. As in Ziltener \cite[p.17]{zil:decay}, the assumptions
imply that for $\delta$ sufficiently small, the invariant symplectic
action $\cA$ in \cite{zil:decay} is well-defined and satisfies the
energy-action inequality
$$ E((A,u)|_{[s,s'] \times S^1}) = - \cA((A,u)(s', \cdot)) +
\cA((A,u)(s, \cdot)) .$$
Then as in \cite[p. 46]{zilt:qk}, 
$$ \dds E((A,u)|_{[S + s, S - s] \times S^1})
 \leq -(2\pi/a - \eps)
E((A,u)|_{[S + s, S - s] \times S^1})
 $$
which implies the necessary exponential decay for the energy.  The
exponential decay for the energy density follows from the point-wise
estimate in \cite[Lemma 3.3]{zil:decay}: If $(A,u)$ has sufficiently
small energy on a ball of radius $1$ around $(s,t)$ contained in
$[-S,S] \times S^1$ then the energy density satisfies
$$e_{A,u}(s,t) \leq \frac{32}{\pi} E((A,u)_{B_1(s,t)}) .$$
Note that the statement of Lemma \cite[Lemma 3.3]{zil:decay} is only
for a fixed vortex on a half-cylinder, on a sufficiently small
neighborhood of infinity, but an examination of the short proof shows
that it suffices to have sufficiently small energy on the ball.
\end{proof}

\begin{proof}[Proof of Theorem \ref{scaledproper}]  We claim sequential compactness,      
that is, that any sequence of vortices $(\hat{C}_\nu,A_\nu,u_\nu)$
with varying scaling $\lambda_\nu$ with bounded energy has a
convergent subsequence.  For sequences with scaling bounded away from
infinity, this follows from Ott \cite{ott:remov}.  For scalings
$\lambda_\nu$ approaching infinity, one obtains convergence in the
absence of bubbling by the arguments of Gaio-Salamon \cite{ga:gw}, see
Theorem \ref{sc}.  That reference also shows that bubbles in $X \qu K$
or $X$ or affine vortices satisfy energy quantization. Hence bubbling
(blow-up of the first derivative of the section) happens only at
finitely many points.  It follows that after passing to a subsequence,
$(\hat{C}_\nu,A_\nu,u_\nu)$ converges on the complement of a finite
subset of the principal component, an infinite area vortex.  By
Proposition \ref{bij} such an infinite area vortex corresponds to a
pseudoholomorphic map to $X \qu K$.  By removal of singularities, one
obtains a stable map to $X \qu K$.  The annulus lemma in parts (c,d)
of Theorem \ref{annlem2}, and the extension described in Lemma
\ref{bconnect}, implies that bubbles in $X$ connect with the affine
vortices.  The standard soft rescaling argument (see for example
\cite[Section 2.6]{zilt:qk}) shows the sequential compactness
statement.  Let the closed sets be those for which any convergent
sequence has a limit.  That this defines a topology, and that this
topology is Hausdorff, uses the construction of local distance
functions as in \eqref{distfns} which will be left to the reader.
\end{proof} 

%\begin{remark} To prove the adiabatic limit Theorem \ref{largearea},
%we will require that $(A,u)$ is a vortex with respect to the area
%form $(\rho_0 + |\rho|) \omega_C$ for $\rho_0 \gg 0$.  This is
%because without this adjustment the moduli space
%$\ovl{M}_{n,1}^K(C,X)$ is almost never (even virtually) smooth,
%because of reducible pairs $(A,u)$, and so cannot be expected to
%yield invariants relating gauged Gromov-Witten invariants with
%Gromov-Witten invariants of the quotient.  For a suitable ``master
%space'' relating the invariants for different $\rho$, see
%\cite{cross}.  \end{remark}

%\bibliographystyle{plain} \bibliography{../Bib/ref}
\def\cprime{$'$} \def\cprime{$'$} \def\cprime{$'$} \def\cprime{$'$}
  \def\cprime{$'$} \def\cprime{$'$}
  \def\polhk#1{\setbox0=\hbox{#1}{\ooalign{\hidewidth
  \lower1.5ex\hbox{`}\hidewidth\crcr\unhbox0}}} \def\cprime{$'$}
  \def\cprime{$'$} \def\cprime{$'$} \def\cprime{$'$}

\end{document}